\newcommand{\de}{\partial}
\newcommand{\dbar}{\overline{\partial}}
\newcommand{\ddbar}{\sqrt{-1} \partial \overline{\partial}}
\newcommand{\ov}[1]{\overline{#1}}
\newcommand{\ti}[1]{\widetilde{#1}}
\newcommand{\vp}{\varphi}
\newcommand{\ve}{\varepsilon}
\newcommand{\e}{\varepsilon}
\newcommand{\p}{\partial}
\newcommand{\pbar}{\overline{\partial}}
\renewcommand{\e}{\varepsilon}
\renewcommand{\leq}{\leqslant}
\renewcommand{\geq}{\geqslant}
\newcommand{\be}{\begin{equation}}
\newcommand{\ee}{\end{equation}}
\newcommand{\PSH}{\mathrm{PSH}}
\newcommand{\mSH}{\mathrm{mSH}}
\newcommand{\R}{\mathbb{R}}
\newcommand{\C}{\mathbb{C}}
\newcommand{\MA}{\mathrm{MA}}
\begin{document}
\newcounter{remark}
\newcounter{theor}
\setcounter{theor}{1}
\newtheorem{claim}{Claim}
\newtheorem{theorem}{Theorem}[section]
\newtheorem{lemma}[theorem]{Lemma}
\newtheorem{corollary}[theorem]{Corollary}
\newtheorem{proposition}[theorem]{Proposition}
\newtheorem{question}{question}[section]
\newtheorem{definition}[theorem]{Definition}
\newtheorem{remark}[theorem]{Remark}

\numberwithin{equation}{section}

\title[Degenerate elliptic equations]{Fully non-linear degenerate elliptic equations in complex geometry}

\begin{abstract}
We derive an {\em a priori} real Hessian estimate for solutions of a large family of geometric fully non-linear elliptic equations on compact Hermitian manifolds, which is independent of a lower bound for the right-hand side function. This improves on the estimates of Sz\'ekelyhidi \cite{Szekelyhidi18} and additionally applies to elliptic equations with a degenerate right-hand side. As an application, we establish the optimal $C^{1,1}$ regularity of envelopes of $(\theta, m)$-subharmonic functions on compact Hermitian manifolds.
\end{abstract}

\author[J. Chu]{Jianchun Chu}
\address{Department of Mathematics, Northwestern University, 2033 Sheridan Road, Evanston, IL 60208}
\email{jianchun@math.northwestern.edu}

\author[N. McCleerey]{Nicholas McCleerey}
\address{Department of Mathematics, University of Michigan, Ann Arbor, 530 Church St, Ann Arbor, MI 48109}
\email{njmc@umich.edu}

\subjclass[2010]{Primary: 	35J70; Secondary: 58J05, 35J60, 35J15, 53C55.}

\maketitle

\section{Introduction}

Suppose that $(X^n,\omega)$ is a compact Hermitian manifold of complex dimension $n$ without boundary. Let $g$ denote the Riemannian metric corresponding to $\omega$. We will additionally fix a real $(1,1)$-form $\chi_0$ on $X$. For any $u\in C^2(X)$, we will look at the form:
\[
\chi := \chi_0 + i\p\pbar u,
\]
and its associated Hermitian endomorphism $A$ on $T^{1,0}X$:
\[
A^p_q := g^{p\ov{j}}\chi_{q\ov{j}}.
\]

Following the set-up of Sz\'ekelyhidi \cite{Szekelyhidi18}, we will be interested in solving elliptic equations of the form:
\begin{equation}\label{main eqn}
F(A) := f(\lambda_1, \ldots, \lambda_n) = h,
\end{equation}
where here $h\in C^{\infty}(X)$ is fixed and $f: \Gamma\rightarrow \R$ is a concave symmetric function of the eigenvalues of $A$ (which we denote by $\lambda_1, \ldots, \lambda_n$), defined on the convex, open, symmetric cone $\Gamma\subsetneq\R^n$. We refer the reader to \cite{Szekelyhidi18} or the beginning of Section 2 for a complete description of the necessary assumptions we will need on $f$ and $\Gamma$.

Equations of the form \eqref{main eqn} were first studied by Caffarelli, Nirenberg, and Spruck, who solved the Dirichlet problem for domains in $\R^n$, in their pioneering paper \cite{CNS85}. Their work was subsequently generalized to compact Riemannian manifolds by Guan \cite{Guan14}, who also introduced the idea of using the existence of certain subsolutions to derive {\em a priori} $C^2$ estimates for \eqref{main eqn}. The complex case was recently studied by Sz\'ekelyhidi \cite{Szekelyhidi18}, using what he calls $\mathcal{C}$-subsolutions, which generalize the subsolutions of \cite{Guan14}. In particular, he shows in \cite{Szekelyhidi18} that the existence of a $\mathcal{C}$-subsolution implies {\em a priori} $C^\infty$-estimates for solutions to \eqref{main eqn}.

The importance of \eqref{main eqn} comes from the fact that it is general enough to simultaneously cover many natural geometric PDEs. The most well-known example is the complex Monge-Amp\`ere equation, famously solved by Yau \cite{Yau78} when $(M,\omega)$ is K\"ahler in his resolution of the Calabi conjecture \cite{Calabi57}. For general Hermitian $(M,\omega)$, the complex Monge-Amp\`{e}re equation was solved by Tosatti-Weinkove \cite{TW10b}, building on several earlier works (see for instance Cherrier \cite{Cherrier87}, Hanani \cite{Hanani96}, Guan-Li \cite{GL10}, Tosatti-Weinkove \cite{TW10a}, Zhang-Zhang \cite{ZZ11}, etc.).

Another well-known example is the complex Hessian equation. When $(M,\omega)$ is K\"{a}hler, Dinew-Ko\l odziej \cite{DK17} proved a Liouville type theorem for $(\omega,m)$-subharmonic functions in $\mathbb{C}^n$, which, when combined with the estimate of Hou-Ma-Wu \cite{HMW10}, solved the complex Hessian equation. When $(M,\omega)$ is Hermitian, the complex Hessian equation was solved by Sz\'ekelyhidi \cite{Szekelyhidi18} and Zhang \cite{Zhang17}, independently.

A third example is the complex Hessian quotient equation (these are equations of the form $f = (\sigma_m/\sigma_\ell)^{\frac{1}{m-\ell}}$, for $\ell < m$ and with $\sigma_m$ being the $m^\text{th}$-symmetric function), which was solved by Sz\'ekelyhidi \cite{Szekelyhidi18} when the right-hand side is constant (assuming the existence of a $\mathcal{C}$-subsolution -- see below). Previous special cases of this equation had been investigated by Song-Weinkove \cite{SW08}, in connection with stable points of the $J$-flow of Donaldson \cite{Do1} and Chen \cite{Ch2}, and also by Fang-Lai-Ma \cite{FLM11}. When the right-hand side is not constant, analogous results were obtained by Sun \cite{Sun13,Sun17a,Sun17b} (see also Li \cite{Li14}, Guan-Sun \cite{GS15}).

The last example of a geometric equation that falls under the purview of \eqref{main eqn} is the Monge-Amp\`ere equation for $(n-1)$-plurisubharmonic functions  (see \cite[Section 7]{Szekelyhidi18}), introduced by Fu-Wang-Wu \cite{FWW10} as a generalization of the Monge-Amp\`ere equation. This equation can be written as:
\begin{equation}\label{n-1 MA}
\left(\omega_{h}+\frac{1}{n-1}((\Delta_{\omega} u)\omega-\ddbar u)\right)^n = e^{h}\omega^{n}
\end{equation}
where $\omega_{h}$ and $\omega$ are Hermitian metrics. When $\omega$ is a K\"ahler metric with non-negative orthogonal bisectional curvature, Fu-Wang-Wu \cite{FWW15} solved \eqref{n-1 MA}. When $\omega$ is a general K\"ahler metric, \eqref{n-1 MA} was solved by Tosatti-Weinkove in \cite{TW17}. They later relaxed this to only requiring that $\omega$ be a Hermitian metric in \cite{TW19}.

\bigskip

Equation \eqref{main eqn} is in general fully non-linear, and as such, strict ellipticity is not guaranteed without some assumption on the right-hand-side function $h$ -- the necessary condition obtained in \cite{Szekelyhidi18} is to require that
\[
\sup_{\p\Gamma} f < h < \sup_\Gamma f,
\]
where here we write:
\[
\sup_{\de\Gamma}f := \sup_{\lambda'\in\de\Gamma}\limsup_{\substack{\lambda\rightarrow\lambda'\\\lambda\in\Gamma}}f(\lambda).
\]
However, a number of geometric applications require that one consider the ``degenerate" elliptic case when $\sup_{\p\Gamma}f = \inf_X h$ -- the most well-known such situation occurs when one studies the space of K\"ahler metrics on a compact K\"ahler manifold $X$. In \cite{Mabuchi87}, Mabuchi introduced a Riemannian metric on this indefinite dimensional space.  Semmes \cite{Semmes92} and Donaldson \cite{Donaldson99} independently showed that the geodesic equation in \cite{Mabuchi87} can be rewritten as a Dirichlet problem for the homogeneous Monge-Amp\`ere equation on $X$ cross an annuls $A\subset \C$. Such geodesics have played a large role in many recent developments in K\"ahler geometry, and so understanding their regularity was an important question, resolved by Chu-Tosatti-Weinkove \cite{CTW17}, who showed that these geodesics are $C^{1,1}$ regular. Their result builds on an extensive body of previous work, including that of Chen \cite{Chen00}, B\l ocki \cite{Blocki12}, and Berman \cite{Berman17}, and is known to be optimal thanks to examples of Lempert-Vivas \cite{LV13}, Darvas-Lempert \cite{DL12}, and Darvas \cite{Da14}. Later, Chu-McCleerey \cite{CM19} generalized this to $C^{1,1}$-regularity of geodesics between K\"ahler metrics on singular K\"ahler varieties (on the smooth locus of the variety).

Another important application requiring degenerate right-hand sides is the study of the envelope:
\[
P(b) := \sup \{ v\in \PSH(X,\omega)\ |\ v\leq b\}
\]
for a smooth obstacle function $b\in C^\infty(X)$ (here $\PSH(X,\omega)$ is the set of all $\omega$-plurisubharmonic functions). It was shown by Tosatti \cite{Tosatti18} and Chu-Zhou \cite{CZ19} that $P(b) \in C^{1,1}(X)$. To see the connection, note that envelope $P(b)$ satisfies a free-boundary problem involving the contact set $K := \{P(b) = b\}$:
\begin{equation}\label{app}
F_{\MA}(\omega _{P(b)}) = \chi_K \left(\frac{\omega_b^n}{\omega^n}\right)^{1/n},
\end{equation}
where here we write $\omega_b := \omega + i\p\pbar b$ and $F_{\MA}(A) := \left(\lambda_1\cdot\ldots\cdot\lambda_n\right)^{1/n}$ for the Monge-Amp\`ere operator. Berman-Demailly \cite[Corollary 2.5]{BD} show that Equation \eqref{app} follows if $P(b)$ has bounded Laplacian (the proof is in fact already basically contained in \cite{Berman09}), which was later shown to be true by Berman \cite{Berman19} when $b\in C^\infty(X)$. Berman's result was later strengthened by Darvas-Rubenstein \cite{DR16} to requiring that $b$ have only bounded Laplacian, and this was subsequentially used by Di Nezza-Trapani \cite{DNT19} to show \eqref{app} for more singular envelopes as well.

\bigskip

To deal with degenerate $h$, the most common method is to approximate $h$ by a family of non-degenerate $h_i$. One then shows that solutions to $F(A) = h_i$ satisfy {\em a priori} estimates independent of $\inf_{X}h$, so they pass to the limiting (degenerate) solution. In this paper, we will derive such an estimate for the real Hessian of $u$ (which we mean to be the Hessian of $u$ with respect to the Levi-Civita connection induced by $g$), which allows us to apply this technique to the degenerate case of Equation \eqref{main eqn}.

As in \cite{Szekelyhidi18}, our estimates will depend on the existence of a $\mathcal{C}$-subsolution to \eqref{main eqn} -- in Section 2, we show that the alternate definition given on \cite[p.345]{Szekelyhidi18} extends easily to the degenerate setting, so the notion of $\mathcal{C}$-subsolution is still well-defined. Moreover, it will be clear from the definition that if $\underline{u}$ is a $\mathcal{C}$-subsolution to \eqref{main eqn}, then there exists a $\sigma_0 > 0$, depending only $(X,\omega)$, $\Gamma$, $f$, $\chi_{0}$, $\underline{u}$ and $h$, such that $\underline{u}$ is a $\mathcal{C}$-subsolution to the non-degenerate equation:
\[
F(A) = h + 2\sigma_0.
\]
In Section 3, we show that $\sigma_0$ can be used to make the estimates independent of $\inf_X h$. Moreover (see Remark \ref{sigma eq 1}), it is easy to see that $\sigma_0$ can always be chosen to be 1 for many geometric PDEs (including the complex Monge-Amp\` ere, the complex Hessian equations and the Monge-Amp\`ere equation for $(n-1)$-plurisubharmonic functions), so that the estimates for these equations also do not depend on $\sigma_0$.

With this background, we can now state our main result:

\begin{theorem}\label{main estimate}
Let $u$ be a smooth solution of \eqref{main eqn} with $\sup_Xu=-1$. Suppose that $\underline{u}$ is a $\mathcal{C}$-subsolution of \eqref{main eqn} and  $\sup_{\de\Gamma}f<h<\sup_{\Gamma}f$, then there exists a constant $C$ depending only on $(X,\omega)$, $\chi_0$, $\underline{u}$, $\sigma_0$, $\sup_{X}h$, $\sup_{X}|\de h|_g$, and a lower bound of $\nabla^2 h$ such that
\[
\sup_{X}|u|+\sup_{X}|\de u|_g+\sup_{X}|\nabla^2 u|_g \leq C,
\]
where $\nabla$ is the Levi-Civita connection of $g$.
\end{theorem}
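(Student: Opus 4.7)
The plan is to establish the $C^0$, $C^2$, and $C^1$ bounds essentially in that order, with the real Hessian estimate being the central new contribution. For the $C^0$ bound, I would run the Alexandroff--Bakelman--Pucci/Bloc\-ki--Sz\'ekelyhidi argument using the $\mathcal{C}$-subsolution $\underline{u}$; since this argument depends only on $\underline{u}$, the ambient geometry, and $\sup_X h$, it transfers unchanged to the degenerate setting without producing any $\inf_X h$ dependence.

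For the real Hessian bound, I would run a maximum principle argument on the quantity
\[
Q := \log \lambda_1(\nabla^2 u) + \phi(|\nabla u|_g^2) + A(\underline{u} - u),
\]
where $\lambda_1(\nabla^2 u)$ is the largest eigenvalue of the real Hessian of $u$ with respect to $g$, $\phi$ is an increasing concave auxiliary function to be chosen, and $A \gg 1$. Since $\lambda_1$ may fail to be smooth at points of eigenvalue multiplicity, I would use the perturbation trick of Chu--Tosatti--Weinkove \cite{CTW17} to reduce to a smooth computation at a maximum point $x_0$. Applying the linearized operator $L := F^{p\ov{q}} \nabla_p \nabla_{\ov{q}}$ to $Q$ and differentiating \eqref{main eqn} twice in the direction of the top eigenvector $V$ of $\nabla^2 u$, the concavity of $f$ produces good negative fourth-order terms, while several ``bad" contributions must be handled: (i) third-order terms arising because $V$ is a real direction rather than a $(1,0)$-direction, and because $g$ has non-trivial torsion; (ii) commutator terms of Riemann-curvature type; and (iii) the contribution $L(h)$, which is controlled from below in terms of $\sup_X |\de h|_g$ and a lower bound of $\nabla^2 h$. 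The third-order terms are absorbed using the Hou--Ma--Wu trick together with the positivity of $F^{11} \lambda_1^2$ from concavity. Crucially, no logarithm of $h$ appears anywhere, so $\inf_X h$ never enters the estimate.

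The main obstacle -- and the decisive use of $\sigma_0$ -- is closing the maximum principle uniformly in the degenerate regime. For this, I would invoke the dichotomy of Sz\'ekelyhidi \cite{Szekelyhidi18}: at any point where $\lambda_1$ is sufficiently large, either
\[
\sum_i F^{ii}\bigl(\lambda_i(\underline{A}) - \lambda_i(A)\bigr) \ge \kappa \sum_i F^{ii},
\]
or there exists an index $k$ with $F^{kk} \ge \kappa \sum_j F^{jj}$, where $\kappa > 0$. Since $\underline{u}$ is a $\mathcal{C}$-subsolution not only of \eqref{main eqn} but also of the strictly non-degenerate equation $F(A) = h + 2\sigma_0$, the constant $\kappa$ may be chosen to depend only on $\sigma_0$ and the geometric data, independent of $\inf_X h$. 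In the first case, taking $A \gg \kappa^{-1}$, the term $AL(\underline{u} - u) \ge A\kappa \sum_i F^{ii} - O(A)$ dominates all other contributions and forces $\lambda_1(\nabla^2 u)(x_0) \le C$. In the second case, the large coefficient $F^{kk}$ directly absorbs the remaining bad terms. Either way one obtains the desired Hessian bound.

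Finally, the $C^1$ estimate follows by standard interpolation from the $C^0$ and $C^2$ bounds, or equivalently by running the classical maximum principle for $|\nabla u|_g^2 \, e^{\eta(u)}$ in parallel. The principal difficulty throughout is to verify that every constant -- especially $\kappa$ in the dichotomy and the coefficients arising from concavity -- remains uniform as $h$ approaches its degenerate limit; the slackness $2\sigma_0$ built into the $\mathcal{C}$-subsolution condition, together with the choice of a test function containing no $\log h$, is precisely what makes the estimate independent of $\inf_X h$.
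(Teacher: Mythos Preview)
Your outline correctly identifies the overall architecture---$C^0$ via ABP/Sz\'ekelyhidi, real Hessian via a maximum principle on $\log\lambda_1(\nabla^2 u)$ plus auxiliary terms, then $C^1$---and your observation that the $\mathcal{C}$-subsolution gap $\sigma_0$ is what makes $\kappa$ independent of $\inf_X h$ is exactly right. However, the real Hessian step as you have written it has a genuine gap.

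The test quantity you propose,
\[
Q = \log\lambda_1(\nabla^2 u) + \phi(|\nabla u|_g^2) + A(\underline{u}-u),
\]
is essentially the Hou--Ma--Wu/CTW quantity, and your assertion that ``the third-order terms are absorbed using the Hou--Ma--Wu trick together with the positivity of $F^{11}\lambda_1^2$ from concavity'' is precisely what fails for a general $F$. For the Monge--Amp\`ere operator one has $F^{i\bar i}=1/\chi_{i\bar i}$, and the concavity term $\sum_{i\neq q}\frac{(F^{q\bar q}-F^{i\bar i})|\chi_{i\bar qV_1}|^2}{\lambda_1(\chi_{i\bar i}-\chi_{q\bar q})}$ is always large enough to absorb the bad term $\sum_i\frac{F^{i\bar i}|u_{V_1V_1i}|^2}{\lambda_1^2}$. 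For general $f$, however, the differences $F^{q\bar q}-F^{i\bar i}$ may be arbitrarily small even when the $\chi_{i\bar i}$ are well separated, so the Hou--Ma--Wu mechanism by itself leaves an uncontrolled residue. There is no relation of the form ``$F^{11}\lambda_1^2$ is positive from concavity'' in this generality.

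The paper resolves this with two additional ingredients you are missing. First, the test quantity contains an extra term $\xi(|\rho|_g^2)$ with $\rho=\nabla^2 u+Lg$, which upon differentiation yields a new good third-order term $G_3=\sum_{\alpha,\beta}\frac{F^{i\bar i}|u_{i\alpha\beta}|^2}{C_A\lambda_1^2}$. Second, the bad term is split according to an index set $I$ determined by the relative sizes of the $F^{i\bar i}$; for indices where $F^{q\bar q}-F^{i\bar i}$ is too small to help, one first proves (using $G_3$) that the eigenvector components $|\nu_q|\le C_A/\lambda_1$ are themselves small, which forces the corresponding bad summands to pick up an extra factor of $\lambda_1^{-2}$, after which $G_3$ absorbs them. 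Without $\xi(|\rho|_g^2)$ you have no $G_3$, and without the refined index-set decomposition you cannot separate the cases. Your linear term $A(\underline{u}-u)$ should also be replaced by $e^{-Au}$, since the quadratic term $A^2e^{-Au}F^{i\bar i}|u_i|^2$ it produces is used in handling the $i\notin I$ piece. Finally, note that the second alternative in the dichotomy is $F^{i\bar i}\ge\kappa\mathcal{F}$ for \emph{all} $i$, not for a single index.
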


As mentioned, our main contribution is the real Hessian estimate. If $\de X\neq\emptyset$, our argument gives an interior bound on the real Hessian (see Remark \ref{boundary}). As mentioned previously, there exist examples of smooth boundary data for the Dirichlet problem for the homogenous complex Monge-Amp\`ere equation such that no $C^2$ solution exists. This implies that one should not expect a degenerate solution to \eqref{main eqn} to be better than $C^{1,1}$ if $h$ is degenerate, meaning that our estimate should be optimal (although, to the best of the authors' knowledge, a concrete example on a closed manifold appears to be unknown -- c.f. \cite{Plis05, DPZ19} for related examples).

\bigskip

To prove Theorem \ref{main estimate}, our approach builds off previous such estimates for the Monge-Amp\`ere equation -- in particular, we proceed by trying to control the largest eigenvalue of (a perturbed version of) $\nabla^2 u$ using the maximum principle (cf. \cite{CTW19}). The main task is to control a third order term of the form:
\be\label{bad terms}
\sum_{i}\frac{F^{i\ov{i}}|u_{V_{1}V_{1}i}|^{2}}{\lambda_{1}^{2}},
\ee
where $V_{1}$ is the unit eigenvector corresponding to $\lambda_{1}$ and $F^{i\ov{i}}$ is the $i\ov{i}$-derivative of $F$ ($F(A)=\log(\lambda_{1}\cdot\ldots\cdot\lambda_{n})$). Previous arguments for the Monge-Amp\`ere equation (\cite{CTW19,Chu18}) heavily utilize special properties of the corresponding $F$. More precisely, for the Monge-Amp\` ere equation, both $F^{i\ov{i}}$ and $F^{i\ov{j},k\ov{l}}$ have simple expressions in terms of the solution metric -- in the general context of Equation \eqref{main eqn}, this is no longer true however, so we need to apply new techniques to control \eqref{bad terms}.

Our solution starts with the term:
\begin{equation}\label{good terms}
\sum_{i\neq q}\frac{(F^{q\ov{q}}-F^{i\ov{i}})|\chi_{i\ov{q}V_{1}}|^{2}}{\lambda_{1}(\chi_{i\ov{i}}-\chi_{q\ov{q}})},
\end{equation}
which is non-negative thanks to the concavity of $F$. In order to effectively use this term, our first contribution is to finesse a technique of Hou-Ma-Wu \cite{HMW10} (which is in turn based on ideas developed for the real Hessian equation by Chou-Wang \cite{CW01}), to control the individual summands in \eqref{bad terms} based on the relative sizes of the corresponding $F^{i\ov{i}}$. This extracts more non-negative terms than previous versions of this technique (cf. Hou-Ma-Wu \cite{HMW10}, Sz\'ekelyhidi \cite{Szekelyhidi18}, Chu-Tosatti-Weinkove \cite{CTW19}, etc.), allowing us to control most of the terms in \eqref{bad terms}.

Even then, \eqref{good terms} is not sufficient to control all of \eqref{bad terms}, as it is possible that $F^{q\ov{q}}-F^{i\ov{i}}$ might be quite small. In this case however, we show that the corresponding non-positive terms are also quite small, picking up an extra factor of $\lambda_{1}^{-2}$ compared to the other summands in \eqref{bad terms}. Thus, we show that these terms can be controlled by the addition of an extra quantity to the maximum principle depending on $\nabla^{2}u$, similar to \cite{Chu18}. As a remark, we point out that the extra term in \cite{Chu18} depends on $\de\dbar u$, and that the argument there also depends heavily on the structure of $F_\mathrm{MA}$. In contrast, our new extra term depends on $\nabla^{2}u$, making the argument significantly more delicate.

\bigskip

A direct consequence of Theorem \ref{main estimate} is the $C^{1,1}$-estimate and the existence of $C^{1,1}$ solutions in the degenerate case, provided there exists an approximating family of solutions to the non-degenerate equations:

\begin{theorem}\label{degenerate estimate}
Suppose that $h\in C^2(X)$ satisfies $\sup_{\p\Gamma} f \leq h < \sup_\Gamma f$, and that $h_\e\in C^2(X)$ satisfy $\sup_{\p\Gamma} f < h_{\e} < \sup_\Gamma f$ and $h_\e \rightarrow h$ in $C^2$.  Let $\underline{u}$ be a $\mathcal{C}$-subsolution to the degenerate equation
\[
F(A(u)) = h(x),
\]
and let $u_\e$ be smooth solutions to the non-degenerate equations:
\[
F(A(u_\e)) = h_\e(x)
\]
for all $0 < \e$ sufficiently small. Then there exists a constant $C$, independent of $\e$ and depending only on $(X,\omega)$, $\chi_0$, $\underline{u}$, $\sigma_0$, $\sup_{X}h$, $\sup_{X}|\de h|_g$, and a lower bound of $\nabla^2 h$ such that
\[
\sup_{X}|u_\e|+\sup_{X}|\de u_\e|_g+\sup_{X}|\nabla^2 u_\e|_g \leq C,
\]
where $\nabla$ is the Levi-Civita connection of $g$.

In particular, up to extracting a subsequence, the $u_\e$ converge to a $u\in C^{1,1}(X)$ solving the degenerate version of equation \eqref{main eqn}:
\[
f^*(\lambda(u)) = h(x),
\]
where here $f^*$ is the upper semi-continuous extension of $f$ to $\ov{\Gamma}$.
\end{theorem}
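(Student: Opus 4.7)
The plan is to apply Theorem~\ref{main estimate} uniformly along the approximating family $\{u_\e\}$, extract a subsequential limit in $C^{1,\alpha}$, and then pass to the limit in the equation. The principal obstacle lies in this final step: a uniform $C^{1,1}$ bound only yields $\nabla^2 u_\e \rightharpoonup \nabla^2 u$ weakly-$*$ in $L^\infty$, so pointwise limits of eigenvalues are not directly available.

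For the uniform estimate, since the equation is invariant under constant shifts I would replace each $u_\e$ by $u_\e - \sup_X u_\e - 1$ to normalize $\sup_X u_\e = -1$. The $C^2$ convergence $h_\e \to h$ supplies uniform bounds on $\sup_X h_\e$, $\sup_X |\p h_\e|_g$, and $\inf_X \nabla^2 h_\e$. For the $\mathcal{C}$-subsolution hypothesis, the discussion preceding Theorem~\ref{main estimate} produces a $\sigma_0 > 0$ depending only on the fixed data such that $\underline{u}$ is a $\mathcal{C}$-subsolution of $F(A) = h + 2\sigma_0$; combined with $\|h_\e - h\|_{C^0} \leq \sigma_0$ for all sufficiently small $\e$, this shows $\underline{u}$ remains a $\mathcal{C}$-subsolution of $F(A(u_\e)) = h_\e$ with the same uniform constant $\sigma_0$. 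Theorem~\ref{main estimate} then yields a constant $C$ independent of $\e$ such that
\[
\sup_X |u_\e| + \sup_X |\p u_\e|_g + \sup_X |\nabla^2 u_\e|_g \leq C.
\]
By Arzel\`a-Ascoli applied to this uniformly $C^{1,1}$-bounded family, a subsequence (still written $u_\e$) converges in $C^{1,\alpha}(X)$ for every $\alpha \in (0,1)$ to some $u$; weak-$*$ lower semi-continuity of the $W^{2,\infty}$ norm then gives $u \in C^{1,1}(X)$, and Rademacher's theorem applied to $\nabla u$ shows that $u$ is twice differentiable almost everywhere on $X$.

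It remains to verify that the limit $u$ solves $f^*(\lambda(u)) = h$ almost everywhere, which is the main obstacle. My approach is to appeal to the stability of viscosity solutions in the degenerate fully nonlinear setting: each $u_\e$ is a classical, and hence a viscosity, solution of $F(A(u_\e)) = h_\e$, and the uniform convergence $u_\e \to u$ together with $h_\e \to h$ implies that $u$ is a viscosity solution of the limit equation $F^*(A(u)) = h$, where $F^*$ denotes the upper semi-continuous extension of $F$ to all Hermitian endomorphisms (equal to $-\infty$ outside $\{\lambda \in \overline{\Gamma}\}$). Because $u$ is already known to be $C^{1,1}$, the viscosity inequalities reduce to the classical equation at every point of twice differentiability, producing $f^*(\lambda(u)) = h$ on a full-measure set. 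As an alternative to viscosity stability, one could instead exploit the concavity of $A \mapsto f(\lambda(A))$ combined with a mollification/Egorov argument at Lebesgue points of $\nabla^2 u$ to pass to the limit directly.
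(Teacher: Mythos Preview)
Your argument for the uniform $C^{1,1}$ estimate is correct and matches the paper's proof essentially line for line: the paper invokes Proposition~\ref{sigma 0} to transfer the $\mathcal{C}$-subsolution property from $h$ to $h_\e$ (exactly your $\sigma_0$ argument), then applies Theorem~\ref{main estimate} together with the $C^2$ convergence of $h_\e$, and finally cites Arzel\`a--Ascoli. The paper's proof is three sentences.

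Where you differ is in the limit passage $f^*(\lambda(u)) = h$. The paper does not prove this at all; it simply asserts that ``the existence of $u$ follows then immediately from the Arzel\`a--Ascoli theorem,'' leaving the verification of the degenerate equation implicit. Your concern about weak-$*$ convergence of $\nabla^2 u_\e$ is legitimate, and your proposed route through viscosity stability (or the alternative via concavity at Lebesgue points of $\nabla^2 u$) is a reasonable way to fill that gap. In this sense you are being more careful than the paper, not less. One minor remark: after your normalization $u_\e \mapsto u_\e - \sup_X u_\e - 1$, the resulting bound is for the normalized family; to recover a bound on the original $u_\e$ one needs the constants $\sup_X u_\e$ themselves to be controlled, which is typically part of the hypotheses (or one reads the conclusion as being for the normalized solutions). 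The paper is silent on this point as well.
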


As discussed in \cite{Szekelyhidi18}, the {\em a priori} estimates in Theorem \ref{main estimate} cannot be used to show the existence of solutions to \eqref{main eqn} (e.g. using the continuity method) without further assumptions to guarantee the existence of appropriate $\mathcal{C}$-subsolutions along the continuity path. As such, it is necessary to assume the existence of the solutions $u_\e$ in Theorem \ref{degenerate estimate}. It is quite non-trivial to come up with natural conditions guaranteeing the existence of $\mathcal{C}$-subsolutions -- see, for instance, the well-known conjectures of Lejmi-Sz\'ekelyhidi \cite{LS15} and Sz\'ekelyhidi \cite{Szekelyhidi18} for the complex Hessian quotient equation. For the degenerate complex Hessian equation however, the zero function is automatically a $\mathcal{C}$-subsolution, which allows us to solve degenerate equations without further assumptions:

\begin{corollary}\label{hessian}
Let $(X,\omega)$ be a compact Hermitian manifold and $1 \leq m \leq n$. Let $\theta$ be a (strictly) $m$-positive form on $X$ and write $\mSH(M,\theta)$ for the set of all $(\theta,m)$-subharmonic functions on $X$ with respect to $\omega$ (see Section 5 for definitions). Then for any non-negative function $h$ on $X$ such that $\int_{X}h\omega^n>0$ and $h^{\frac{1}{m}}\in C^{2}(X)$, there exists a pair $(u,c)\in C^{1,1}(X)\times\mathbb{R}_{+}$ such that
\[
\begin{cases}
(\theta+\ddbar u)^{m}\wedge \omega^{n-m} = ch\omega^{n}, \\
u\in\mSH(X,\theta),  \quad \sup_{X}u = -1.
\end{cases}
\]
\end{corollary}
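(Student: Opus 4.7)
The plan is to realize the equation as a limit of non-degenerate Hermitian complex Hessian equations, solve each using the existence theorems of Sz\'ekelyhidi \cite{Szekelyhidi18} and Zhang \cite{Zhang17}, and extract a $C^{1,1}$ limit via Theorem \ref{degenerate estimate}. The equation fits into the framework of \eqref{main eqn} with $\chi_0 = \theta$, cone $\Gamma = \Gamma_m = \{\lambda : \sigma_j(\lambda) > 0,\ 1 \leq j \leq m\}$, and $f(\lambda) = \sigma_m(\lambda)^{1/m}$; the right-hand side in this formulation is $\binom{n}{m}^{1/m}(ch)^{1/m}$. Since $\sup_{\p\Gamma_m} f = 0$, the hypothesis $h \geq 0$ is exactly the degenerate condition $\sup_{\p\Gamma} f \leq h$. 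I first verify that $\underline u = 0$ is a $\mathcal{C}$-subsolution for every $c > 0$: strict $m$-positivity of $\theta$ puts $\lambda(\theta)$ in the interior of $\Gamma_m$ pointwise, and $\sigma_m(\lambda(\theta) + t e_i)^{1/m} \to \infty$ as $t \to \infty$ for each standard basis direction, so the extended $\mathcal{C}$-subsolution condition from Section 2 holds at every point of $X$.

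For the approximation I would set $\widetilde h_\e := h^{1/m} + \e$ and $h_\e := \widetilde h_\e^{\,m}$, so $h_\e \in C^2(X)$ is strictly positive and $h_\e^{1/m} \to h^{1/m}$ in $C^2$ as $\e \to 0$. By \cite{Szekelyhidi18, Zhang17}, for each small $\e > 0$ there is a smooth pair $(u_\e, c_\e) \in C^{\infty}(X) \times \mathbb{R}_+$ with $u_\e \in \mSH(X,\theta)$, $\sup_X u_\e = -1$, solving
\[
(\theta + \ddbar u_\e)^m \wedge \omega^{n-m} = c_\e h_\e \omega^n.
\]

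The main task is then to produce uniform bounds on $c_\e$. The upper bound $c_\e \leq C$ follows from integrating the equation together with a uniform $L^\infty$ bound on $u_\e$ (Sz\'ekelyhidi's estimate with $\underline u = 0$) and a Chern--Levine--Nirenberg-type mass estimate for $(\theta + \ddbar u_\e)^m \wedge \omega^{n-m}$ controlled by $\|u_\e\|_{L^\infty}$, combined with $\int_X h\,\omega^n > 0$. The lower bound $c_\e \geq c_0 > 0$ is the main obstacle in the non-K\"ahler setting, where $\int_X (\theta + \ddbar u_\e)^m \wedge \omega^{n-m}$ is not a cohomological invariant; I would argue by contradiction using $L^1$-compactness of $\{v \in \mSH(X,\theta) : \sup_X v = -1\}$ to extract a limit $u_* \in \mSH(X,\theta)$ with $\sup_X u_* = -1$ and $(\theta + \ddbar u_*)^m \wedge \omega^{n-m} \equiv 0$ weakly, and then rule this out by a pluripotential non-degeneracy argument that exploits the strict $m$-positivity of $\theta$. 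This is the step I expect to be by far the most delicate.

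Once such bounds are in hand, the right-hand side $\binom{n}{m}^{1/m} c_\e^{1/m} h_\e^{1/m}$ of \eqref{main eqn} has uniformly bounded $C^2$-norm and uniformly lower-bounded Hessian in $\e$, so Theorem \ref{degenerate estimate} (or Theorem \ref{main estimate} applied with uniform constants) delivers a uniform $C^{1,1}$ bound on $u_\e$. Passing to a subsequence via Arzel\`a--Ascoli yields $u_\e \to u$ in $C^{1,\alpha}(X)$ for every $\alpha < 1$ and $c_\e \to c > 0$. The resulting pair $(u,c) \in C^{1,1}(X) \times \mathbb{R}_+$ satisfies $u \in \mSH(X,\theta)$, $\sup_X u = -1$, and the degenerate equation holds as an identity of bounded $(n,n)$-forms on $X$, completing the proof.
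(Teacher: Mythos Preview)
Your overall strategy---approximate by strictly positive right-hand sides, solve each non-degenerate equation via \cite{Szekelyhidi18, Zhang17}, bound the constants $c_\e$ uniformly, and pass to the limit with Theorem \ref{degenerate estimate}---is exactly the paper's approach. However, you have the two $c_\e$ bounds inverted in difficulty, and this creates a genuine gap.

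The lower bound $c_\e \geq c_0 > 0$, which you flag as ``by far the most delicate'', is in fact a one-line maximum principle: at a minimum point $p$ of $u_\e$ one has $\ddbar u_\e(p) \geq 0$, hence $\sigma_m(\theta + \ddbar u_\e)(p) \geq \sigma_m(\theta)(p)$, and so $c_\e h_\e(p) \geq \inf_X \sigma_m(\theta) > 0$. This gives $c_\e \geq \inf_X \sigma_m(\theta) / \sup_X h_\e$ immediately; no pluripotential contradiction argument is needed, and your proposed route through a limit $u_*$ with vanishing Hessian measure is both vague and unnecessary.

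Conversely, your upper bound argument is circular as written: Sz\'ekelyhidi's $L^\infty$ estimate (cf.\ Theorem \ref{Szekelyhidi estimates}) depends on $\sup_X$ of the right-hand side, which here is proportional to $c_\e^{1/m}$---the very quantity you are trying to bound. The paper sidesteps this by using instead the $L^1$-compactness of sup-normalized $(\theta,m)$-subharmonic functions to obtain $\|u_\e\|_{L^1} \leq C$ \emph{without} invoking the equation, and then Maclaurin's inequality together with Stokes' theorem to bound $\int_X (c_\e h_\e)^{1/m} \omega^n \leq C\int_X \sigma_1(\theta + \ddbar u_\e)\,\omega^n \leq C(1 + \|u_\e\|_{L^1})$. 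Combined with $\int_X h^{1/m}\omega^n > 0$, this yields $c_\e \leq C$.
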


\bigskip

Lastly, in Section 6, we give a geometric application of our result, by showing how to adapt the estimates in Theorem \ref{main estimate} to get regularity of envelopes of $m$-subharmonic functions, generalizing the above results for the Monge-Amp\`ere equation.

\begin{theorem}\label{envelope}
Let $(X,\omega)$ be a compact Hermitian manifold and $\theta$ a (strictly) $m$-positive form. If $b\in C^{1,1}(X)$, we define the envelope:
\[
P_{m,\theta}(b) := \sup\{ v\in\mSH(X, \theta)~|~v\leq b \}.
\]
Then $P_{m,\theta}(b)\in C^{1,1}(X)$. In particular, $P_{m,\theta}(b)$ solves:
\[
(\theta + i\p\pbar P_{m,\theta}(b))^m\wedge\omega^{n-m} = \chi_{K} \theta_b^m\wedge\omega^{n-m},
\]
where $\theta_{b}=\theta+i\p\pbar b$ and $K = \{ P_{m,\theta}(b) = b\}$ is the contact set.
\end{theorem}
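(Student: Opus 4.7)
The plan is to adapt the Berman-type approximation scheme from the K\"ahler Monge-Amp\`ere setting \cite{Berman19, Tosatti18, CZ19} to the complex Hessian equation, using Theorem \ref{main estimate} for the key second-order estimate. Since $b$ is only $C^{1,1}(X)$, we first mollify to obtain $b_\e \in C^\infty(X)$ with $\|b_\e\|_{C^{1,1}}$ uniformly bounded and $b_\e \to b$ in $C^{1,\alpha}$. It suffices to establish uniform (in $\e$) $C^{1,1}$ bounds on $P_{m,\theta}(b_\e)$, since $P_{m,\theta}$ is stable under uniform convergence of the obstacle.

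For each fixed smooth $b_\e$, consider the family of non-degenerate Hessian equations
\[
(\theta + i\p\pbar u_{\beta,\e})^m \wedge \omega^{n-m} = e^{\beta(u_{\beta,\e} - b_\e)}\omega^n, \qquad \beta > 0,
\]
which admit smooth solutions $u_{\beta,\e} \in \mSH(X,\theta)$ by Sz\'ekelyhidi \cite{Szekelyhidi18}, with $\underline{u} \equiv 0$ serving as a $\mathcal{C}$-subsolution because $\theta$ is strictly $m$-positive. A standard maximum principle argument at the maximum of $u_{\beta,\e} - b_\e$ yields $u_{\beta,\e} \leq b_\e + C\beta^{-1}\log\beta$, while $\inf_X u_{\beta,\e}$ is bounded below by comparison with any element of $\mSH(X,\theta)$ dominated by $b_\e$. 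Consequently $u_{\beta,\e} \to P_{m,\theta}(b_\e)$ uniformly as $\beta \to \infty$.

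The crucial step is to extract a $C^{1,1}$ estimate uniform in both $\beta$ and $\e$. A direct application of Theorem \ref{main estimate} to the right-hand side $h_\beta := \beta(u_{\beta,\e} - b_\e) + O(1)$ fails because both $|\p h_\beta|_g$ and $\nabla^2 h_\beta$ grow linearly in $\beta$. The remedy is to modify the maximum principle argument underlying Theorem \ref{main estimate}: the bad $\beta$-dependent error terms arising from derivatives of $h_\beta$ can be absorbed by augmenting the auxiliary test function with a suitable contribution of the form $-A(u_{\beta,\e} - b_\e)$ for $A$ chosen large, so that differentiating this new term reproduces the offending $\beta$-factors with opposite sign, mirroring the strategy of Berman-Demailly \cite{BD} for the Monge-Amp\`ere envelope. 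Tracing this modification through the delicate third-order analysis controlling \eqref{bad terms} should ensure that the cancellation mechanism developed for Theorem \ref{main estimate} goes through with a $\beta$-independent constant.

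With the uniform $C^{1,1}$ bound established, Arzel\`a-Ascoli yields a $C^{1,\alpha}$-convergent subsequence of $u_{\beta,\e}$ whose limit is $P_{m,\theta}(b_\e) \in C^{1,1}(X)$ with uniform norm; passing $\e \to 0$ gives $P_{m,\theta}(b) \in C^{1,1}(X)$. The free-boundary equation on $K$ then follows from a now-standard Berman-Demailly argument (\cite[Corollary~2.5]{BD}): boundedness of the real Hessian forces $i\p\pbar P_{m,\theta}(b) = i\p\pbar b$ almost everywhere on $K$, while off $K$ the function $P_{m,\theta}(b)$ is $(\theta,m)$-maximal in the pluripotential sense, so its $m$-Hessian measure vanishes there. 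The main obstacle is the $\beta$-uniform second-order estimate; it requires a careful adaptation of the already intricate maximum principle proof of Theorem \ref{main estimate} to exploit the specific structure of the exponential right-hand side $e^{\beta(u_{\beta,\e}-b_\e)}$.
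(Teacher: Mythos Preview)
Your overall strategy---smooth the obstacle, run Berman's $\beta\to\infty$ approximation, extract a uniform $C^{1,1}$ bound, pass to the limit---is exactly the paper's. But your description of how to obtain the $\beta$-uniform Hessian estimate is not right, and this is the heart of the matter.

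You propose to add a term $-A(u_{\beta,\e}-b_\e)$ to the test function so that ``differentiating this new term reproduces the offending $\beta$-factors with opposite sign.'' It does not: differentiating $-A(u-b)$ produces $-A$ times derivatives of $u-b$, which carries no $\beta$ at all. The $\beta$-factors arise when you differentiate the \emph{equation} $\log\sigma_m(\theta_u)=\beta(u-b)$, and they multiply derivatives of $u-b$, not of the test function. The paper's actual mechanism is different: when you apply $\nabla_{V_1}\nabla_{V_1}$ (resp.\ $\nabla_{\bar p}$, $\nabla_\alpha\nabla_\beta$) to the equation, the right-hand side yields $\beta(u_{V_1V_1}-b_{V_1V_1})$, etc. The point is that these $\beta$-terms are not purely bad---the pieces involving derivatives of $u$ have favorable sign (e.g.\ $\beta\lambda_1$, $\beta|\partial u|^2$, $\beta|\rho|^2$) and dominate the pieces involving derivatives of $b$. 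After a mild adjustment of the auxiliary functions $\xi$ and $\eta$ (enlarging the constants in their definitions so that $\xi'$ and $\eta'$ are small enough), the net $\beta$-contribution is nonnegative and can simply be dropped. No new term in $Q$ is needed.

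Two smaller points. First, the uniform convergence $u_{\beta,\e}\to P_{m,\theta}(b_\e)$ is not a direct consequence of the $C^0$ bounds you state; one needs a comparison of the form $v\le u_{\beta,\e}+o(1)$ for every competitor $v\in\mSH(X,\theta)$ with $v\le b_\e$, and on a Hermitian manifold this requires a viscosity-type argument (touching $v$ from above by $u_{\beta,\e}$ plus a constant and invoking the equation). Second, the claim that the $m$-Hessian measure vanishes off the contact set is not as standard in the Hermitian setting as in the K\"ahler one; the paper handles this by showing the approximating measures $(\theta+i\partial\bar\partial u_{\beta,\e})^m\wedge\omega^{n-m}$ converge weakly and tend to zero pointwise off $K$.
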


We now outline the contents of the rest of the paper. Section 2 recalls some necessary background material, largely along the lines of \cite{Szekelyhidi18}. In section 3, we show that the {\em a priori} estimates of \cite{Szekelyhidi18} depend only on the background data in Theorem \ref{main estimate}. We prove our main result, Theorem \ref{main estimate}, in Section 4.  In Section 5, we prove Theorem \ref{degenerate estimate}, and show how to get Corollary \ref{hessian}. Finally, in Section 6, we apply our estimates to $P_m(b)$ and prove Theorem \ref{envelope}.

{\bf Acknowledgements:} The authors were partially supported by NSF RTG grant DMS-1502632. We would also like to thank Valentino Tosatti for helpful discussions.

\section{Background and Notation}

Recall that $(X^n,\omega)$ is a compact Hermitian manifold, and we write $g$ for the Riemannian metric corresponding to $\omega$. We will also let $\nabla$ denote the Levi-Civita connection induced by $g$.

We begin by recalling all the necessary assumptions required on Equation \eqref{main eqn}. First, we shall suppose that $\Gamma\subsetneq\mathbb{R}^n$ is an open, symmetric, convex cone with vertex at the origin. Additionally, we will require that $\Gamma$ contain the positive orthant:
\[
\begin{split}
&\Gamma_n \subseteq \Gamma, \\
\Gamma_n = \{(v_1,\ldots,v_n)&\in\mathbb{R}^n~|~v_i>0 \ \text{for each $i$}\}.
\end{split}
\]

Now, we will assume that $f: \Gamma \rightarrow \R$ is a smooth function on $\Gamma$ such that
\begin{enumerate}[(i)]
\item $f$ is concave,
\item $f_i=\frac{\de f}{\de\lambda_i}>0$ for each $i$.
\item For any $\sigma < \sup_\Gamma f$ and $\lambda\in\Gamma$, we have $\lim_{t\rightarrow\infty} f(t\lambda) > \sigma$.
\end{enumerate}

Let $\lambda(u)\in\R^n$ be the function taking $u$ to the un-ordered $n$-tuple of eigenvalues of $A$, the Hermitian endomorphism on $T^{1,0}X$ defined in the introduction. Since both $\Gamma$ and $f$ are symmetric, we have that $f(\lambda(u))$ is well-defined and smooth on $X$. We are then interested in the equation:
\be\label{new equation}
\begin{cases}
&f(\lambda(u)) = h(x)\in C^\infty(X)\\
&\sup_{\p\Gamma} f < h < \sup_\Gamma f.
\end{cases}
\ee
We say \eqref{new equation} is a {\bf non-degenerate} equation (we shall also refer to its solutions as being non-degenerate). We will also be interested in the {\bf degenerate} version of equation \eqref{new equation}, which is when $\sup_{\p\Gamma} f = \inf_X h$; more precisely, when we say that \eqref{new equation} is degenerate, we actually refer to the equation:
\be\label{degen equation}
\begin{cases}
&f^*(\lambda(u)) = h(x)\in C^\infty(X)\\
&\sup_{\p\Gamma} f \leq h < \sup_\Gamma f,
\end{cases}
\ee
where $f^*$ is the upper semi-continuous extension of $f$ to $\ov{\Gamma}$. We will also abuse terminology slightly further, and say that solutions to \eqref{degen equation} are {\bf degenerate solutions} to equation \eqref{new equation}. Note that a degenerate solution $u$ will in general only have $\lambda(u)\in\ov{\Gamma}$.

\bigskip

For any $\sigma\in(\sup_{\de\Gamma}f,\sup_{\Gamma}f)$, we write
\[
\Gamma^\sigma = \{ \lambda\in\Gamma ~|~ f(\lambda) > \sigma\}.
\]
It is easy to check that $\Gamma^\sigma$ is a convex open set, and that the level set $f^{-1}(\sigma)=\de\Gamma^\sigma$ is a smooth hypersurface in $\mathbb{R}^n$ which doesn't intersect the boundary of $\Gamma$.

\bigskip

We now recall the definition of $\mathcal{C}$-subsolutions for non-degenerate equations, as given by Sz\'ekelyhidi \cite[Definition 1]{Szekelyhidi18}.
\begin{definition}\label{subsolution def 1}
A function $\underline{u}$ on $X$ is said to be a $\mathcal{C}$-subsolution of the non-degenerate equation $F(A)=h$, if at each $x\in X$, the set
\[
(\mu(x)+\Gamma_n)\cap\de\Gamma^{h(x)}
\]
is bounded, where
\[
\mu(x) = \lambda\big(g^{j\ov{p}}\underline{\chi}_{i\ov{p}}(x)\big), \quad \underline{\chi}=\chi_0+\ddbar\underline{u}.
\]
\end{definition}

In \cite{Szekelyhidi18}, Sz\'ekelyhidi also proved the following equivalent characterization of $\mathcal{C}$-subsolutions. Following Trudinger \cite{Trudinger95}, define
\[
\ti{\Gamma} = \{ \mu\in\mathbb{R}^n ~|~ \text{there exists $t>0$ such that $\mu+t\mathbf{e}_i\in\Gamma$ for all $i$}\}.
\]
Note that $\ti{\Gamma}$ is also an open, symmetric, convex cone. For each $\mu\in\ti{\Gamma}$, we define
\[
f_{\infty, i}(\mu) = \lim_{t\rightarrow\infty}f(\mu+t\mathbf{e}_i), \quad
f_{\infty,\min}(\mu) = \min_{1\leq i\leq n}f_{\infty,i}(\mu),
\]
where here $\mathbf{e}_i\in\R^n$ is the $i^{th}$ standard basis vector. Note that, from the concavity of $f$, we have that $f_{\infty,\min}$ is concave and is thus either everywhere finite on $\ti{\Gamma}$, or uniformly $+\infty$ (cf. \cite{Trudinger95}).

\begin{proposition}\label{ASDFS}
(Sz\'ekelyhidi, \cite[p.345]{Szekelyhidi18}) The smooth function $\underline{u}$ is a $\mathcal{C}$-subsolution to the non-degenerate equation $F(A) = h$ if and only if:
\[
\mu(x) \in \ti{\Gamma}\ \ \text{and}\ \ f_{\infty,\min}(\mu) > h(x),\ \ \text{for any $x\in X$}.
\]
\end{proposition}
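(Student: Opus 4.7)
My plan is to prove the two implications separately, fixing $x \in X$ and writing $\mu = \mu(x)$, $h = h(x)$. The key ingredients I will use are: (1) the inclusion $\Gamma + \overline{\Gamma_n} \subseteq \Gamma$, which follows from $\Gamma_n \subseteq \Gamma$, convexity, and openness of $\Gamma$; (2) combined with $f_i > 0$, this implies $f$ is non-decreasing along any direction in $\overline{\Gamma_n}$; and (3) assumption (iii) on $f$, which guarantees $f$ reaches values arbitrarily close to $\sup_\Gamma f$ far out along any ray in $\Gamma_n$.

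For the $(\Leftarrow)$ direction, I assume $\mu \in \ti{\Gamma}$ and $f_{\infty, \min}(\mu) > h$. Arguing by contradiction, suppose $(\mu + \Gamma_n) \cap \partial\Gamma^h$ contains a sequence $\lambda_k = \mu + v_k$ with $v_k \in \Gamma_n$ and $|\lambda_k| \to \infty$; then $|v_k| \to \infty$, and after passing to a subsequence, some component $(v_k)_j \to \infty$. Since $\mu \in \ti{\Gamma}$, the point $\mu + (v_k)_j \mathbf{e}_j$ lies in $\Gamma$ for large $k$, and by the monotonicity of $f$, $h = f(\lambda_k) \geq f(\mu + (v_k)_j \mathbf{e}_j)$. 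Letting $k \to \infty$, the right-hand side converges to $f_{\infty, j}(\mu) \geq f_{\infty, \min}(\mu) > h$, a contradiction.

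For the $(\Rightarrow)$ direction, I prove the contrapositive: assuming $\mu \notin \ti{\Gamma}$ or $f_{\infty, \min}(\mu) \leq h$, I construct an unbounded sequence in $(\mu + \Gamma_n) \cap \partial\Gamma^h$. Fix an index $j$ witnessing the failure, so either $\mu + t\mathbf{e}_j \notin \Gamma$ for all $t > 0$ (case A), or $\mu \in \ti{\Gamma}$ with $f_{\infty, j}(\mu) \leq h$ (case B). For a parameter $\alpha > 0$ to be shrunk, I choose $\eta_\alpha \in \overline{\Gamma_n}$ with $(\eta_\alpha)_j = 0$ so that $\mu + \eta_\alpha + \alpha \mathbf{1} \in \Gamma$ and $f_{\infty, j}(\mu + \eta_\alpha + \alpha \mathbf{1}) > h$; the latter is achievable via assumption (iii) once $\eta_\alpha + \alpha \mathbf{1}$ has sufficient magnitude. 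Then the ray $s \mapsto \mu + \eta_\alpha + \alpha \mathbf{1} + s\mathbf{e}_j$ lies in $\Gamma$ for $s \geq 0$, $f$ is strictly increasing along it with limit $f_{\infty, j}(\mu + \eta_\alpha + \alpha \mathbf{1}) > h$, and by the intermediate value theorem the equation $f = h$ has a unique solution $s(\alpha)$; the resulting point lies in $(\mu + \Gamma_n) \cap \partial\Gamma^h$. The key claim is that $s(\alpha) \to \infty$ as $\alpha \to 0$ and $\eta_\alpha$ shrinks appropriately: in case B, taking $\eta_\alpha \to 0$ and using continuity of $f_{\infty, j}$ on $\ti{\Gamma}$ (from concavity) gives $f_{\infty, j}(\mu + \eta_\alpha + \alpha \mathbf{1}) \to f_{\infty, j}(\mu) \leq h$, forcing $s(\alpha) \to \infty$; in case A, shrinking $\eta_\alpha$ toward the minimal value that keeps $\mu + \eta_\alpha + \alpha \mathbf{1} \in \Gamma$ pushes this base point toward $\partial \Gamma$ where $f$ is driven below $h$ (using $\sup_{\partial \Gamma} f < h$), and the $\mathbf{e}_j$-parameter required to reach $f = h$ must diverge—otherwise $\mu + t\mathbf{e}_j$ would enter $\Gamma$ for some $t > 0$, contradicting case A.

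The main obstacle is handling case A in the contrapositive, where $f_{\infty, j}(\mu)$ is undefined and no standard continuity argument applies. Here the blow-up of $s(\alpha)$ must be extracted from the geometry of $\Gamma$ itself: as the base point approaches $\partial \Gamma$, the feasible range of the ray in $\Gamma$ changes in a way that prevents $s(\alpha)$ from remaining bounded, combining the boundary condition $\sup_{\partial \Gamma} f < h$ with the avoidance property $\mu + s\mathbf{e}_j \notin \Gamma$ for all $s > 0$.
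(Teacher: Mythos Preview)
Your $(\Leftarrow)$ direction is correct and essentially identical to the paper's argument.

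Your $(\Rightarrow)$ direction has a genuine gap. In Case~B you simultaneously require $f_{\infty,j}(\mu+\eta_\alpha+\alpha\mathbf{1})>h$ (so that the intermediate value theorem yields a solution $s(\alpha)$) and $\eta_\alpha,\alpha\to 0$ (so that this quantity tends to $f_{\infty,j}(\mu)\le h$, forcing $s(\alpha)\to\infty$). These requirements are incompatible when $f_{\infty,j}(\mu)<h$ strictly: by the very continuity you invoke, $f_{\infty,j}(\mu+\eta_\alpha+\alpha\mathbf{1})<h$ once $\eta_\alpha$ and $\alpha$ are small, and then the ray $s\mapsto \mu+\eta_\alpha+\alpha\mathbf{1}+s\mathbf{e}_j$ never reaches the level set $\{f=h\}$ at all. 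Your Case~A is also not settled: if $s(\alpha_k)\to s_\infty<\infty$ along a subsequence while the base points converge to some $\mu+\eta_0\in\partial\Gamma$ with $(\eta_0)_j=0$, the limit of the solution points is $\mu+\eta_0+s_\infty\mathbf{e}_j$, not $\mu+s_\infty\mathbf{e}_j$; nothing forces $\eta_0=0$, so you cannot deduce that $\mu+t\mathbf{e}_j\in\Gamma$ for some $t$.

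The paper sidesteps all of this by perturbing in the diagonal direction $\mathbf{1}$ rather than along $\mathbf{e}_j$. For any $t>0$ with $\mu+t\mathbf{e}_i\notin\Gamma$ (your Case~A) or $f(\mu+t\mathbf{e}_i)<h$ (your Case~B with $t$ large), one notes that $\mu+t\mathbf{e}_i+s\mathbf{1}\in\Gamma$ for $s$ large and that $f(\mu+t\mathbf{e}_i+s\mathbf{1})\to\sup_\Gamma f>h$ as $s\to\infty$ (compare with $f(\tfrac{s}{2}\mathbf{1})$ and use assumption~(iii)). Hence some $s_1>0$ gives $\mu+t\mathbf{e}_i+s_1\mathbf{1}\in(\mu+\Gamma_n)\cap\partial\Gamma^{h}$, and its $i$-th coordinate is at least $\mu_i+t$. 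Sending $t\to\infty$ produces an unbounded family in the intersection, disposing of both cases simultaneously without any delicate limiting procedure on the base point.
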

\begin{proof}
We first check the forward implication. Suppose that $x\in X$, $t > 0$, and $i\in\{1,\ldots, n\}$ are such that $\mu(x) + t\mathbf{e}_i\not\in \Gamma$. Since $\Gamma$ is an open convex cone containing $\Gamma_n$, there must exist some $s_0 > 0$ such that $\mu(x) + t\mathbf{e}_i + s_0\mathbf{1}\in\p \Gamma$. Further, our assumptions on $f$ imply that:
\[
\lim_{s\rightarrow\infty} f(\mu(x) + t\mathbf{e}_i + s\mathbf{1}) = \sup_\Gamma f.
\]
Since $\sup_{\p\Gamma} f < h < \sup_\Gamma f$, it follows that there exists some $s_1 > s_0 > 0$ with:
\[
f(\mu(x) + t\mathbf{e}_i + s_1\mathbf{1}) = h(x).
\]
Thus:
\[
\mu(x) + t\mathbf{e}_i + s_1\mathbf{1} \in (\mu(x) + \Gamma_n)\cap \p\Gamma^{h(x)}.
\]
So if $\underline{u}$ is a $\mathcal{C}$-subsolution, it follows that, for any $i$, the set of all $t$ such that $\mu(x) + t\mathbf{e}_i\not\in\Gamma$ is bounded; which is to say, $\mu(x)\in\ti{\Gamma}$.

We check the second requirement in a similar manner -- let $\mu(x)\in \ti{\Gamma}$ be such that:
\[
f_{\infty, i}(\mu(x)) = \lim_{t\rightarrow\infty} f(\mu(x) + t\mathbf{e}_i) \leq h(x),
\]
for some $i\in\{1,\ldots, n\}$. Since $f$ is strictly increasing in the $\mathbf{e}_i$-direction, we have:
\[
f(\mu(x) + t\mathbf{e}_i) < h(x) \text{ for all $t$ sufficiently large.}
\]
But as we just argued, this implies there is an $s_1 > 0$ such that:
\[
\mu(x) + t\mathbf{e}_i + s_1\mathbf{1} \in (\mu(x) + \Gamma_n)\cap\p\Gamma^{h(x)}
\]
for each $t$ sufficiently large. Clearly,
\[
\lim_{t\rightarrow\infty}|\mu(x)+t\mathbf{e}_{i}+s_{1}\mathbf{1}|
\geq \lim_{t\rightarrow\infty}\left(t-|\mu(x)|\right) = +\infty.
\]
Thus, if $\underline{u}$ is a $\mathcal{C}$-subsolution, then we cannot have $f_{\infty, i}(\mu(x)) \leq h(x)$ for any $i$. This concludes the forward direction.

Conversely, suppose that $\mu(x)\in\ti{\Gamma}$ and $f_{\infty,\min}(\mu(x)) > h(x)$ but $(\mu(x) + \Gamma_n)\cap\p\Gamma^{h(x)}$ is unbounded. Then there exists an unbounded sequence $\{v_j\}\in (\mu(x) + \Gamma_n)\cap\p\Gamma^{h(x)}$ -- if we write $v_j = (v_j^1,\ldots, v_j^n)$, then after taking a subsequence, we can assume that:
\[
\lim_{j\rightarrow\infty} v_j^{i_0} = +\infty,
\]
for at least one index $i_0$. But then:
\[
f_{\infty,\min}(\mu(x)) \leq f_{\infty,i_0}(\mu(x))\leq \lim_{j\rightarrow\infty} f(\mu(x) + v_j) = h(x),
\]
contradicting with $f_{\infty,\min}(\mu(x))>h(x)$.
\end{proof}

We will take this alternate characterization as our definition of a $\mathcal{C}$-subsolution for degenerate equations:
\begin{definition}\label{subsolution def 2}
We say that a function $\underline{u}$ on $X$ is a $\mathcal{C}$-subsolution to the degenerate equation \eqref{degen equation} if at each $x\in X$, we have:
\[
\mu(x) \in \ti{\Gamma}, \quad f_{\infty,\min}(\mu) > h(x).
\]
\end{definition}

The advantage of using this characterization of $\mathcal{C}$-subsolutions for degenerate equations is that it is easily seen to be preserved under small perturbations of $h$. As mentioned above, $f_{\infty,\min}$ is either $\equiv +\infty$, or everywhere finite and concave on the open cone $\ti{\Gamma}$; it follows that, in this second case, $f_{\infty,\min}(\mu(x))$ is continuous on $X$. Thus, we immediately get:

\begin{proposition}\label{sigma 0}
If $\underline{u}$ is a $\mathcal{C}$-subsolution to the non-degenerate equation \eqref{new equation} or the degenerate equation \eqref{degen equation}, then there exists a constant $\sigma_0 > 0$, depending only on $(X,\omega)$, $\Gamma$, $f$, $\chi_{0}$, $\underline{u}$ and $h$, such that $\underline{u}$ is a $\mathcal{C}$-subsolution to the non-degenerate equation:
\[
F(A) = h + 2\sigma_0.
\]
Consequently, if $\ti{h}\in C^\infty(X)$ satisfies $\sup_{\p\Gamma} f \leq \ti{h} <\sup_\Gamma f$ and $|\ti{h} - h| < 2\sigma_0$, then $\underline{u}$ is also a $\mathcal{C}$-subsolution to $F(A) = \ti{h}$.
\end{proposition}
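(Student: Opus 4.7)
The plan is to use Definition \ref{subsolution def 2} (which, by Proposition \ref{ASDFS}, is compatible with the original definition in the non-degenerate case), together with simple compactness and continuity arguments, to extract a uniform positive gap between $f_{\infty,\min}(\mu(x))$ and $h(x)$.

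First I would record the continuity of the data. Since $\underline{u}$ is smooth, $\chi_0$ is smooth, and the eigenvalues of a Hermitian matrix depend continuously on its entries, the map $x \mapsto \mu(x)$ is continuous from $X$ to $\mathbb{R}^n$. Because $\underline{u}$ is a $\mathcal{C}$-subsolution, $\mu(X) \subset \widetilde{\Gamma}$, and by compactness of $X$ together with openness of $\widetilde{\Gamma}$, the set $\mu(X)$ is a compact subset of $\widetilde{\Gamma}$. Next I would split into two cases according to the dichotomy for $f_{\infty,\min}$ noted just before the statement: either $f_{\infty,\min} \equiv +\infty$ on $\widetilde{\Gamma}$, or $f_{\infty,\min}$ is finite and concave (hence continuous) on $\widetilde{\Gamma}$.

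In the second (generic) case, the function $x \mapsto f_{\infty,\min}(\mu(x)) - h(x)$ is continuous on the compact space $X$, and strictly positive pointwise by Definition \ref{subsolution def 2}. Hence it attains a positive minimum $4\sigma_0 > 0$ on $X$, and I will also require $\sigma_0 \le \tfrac14(\sup_\Gamma f - \sup_X h)$ (using that $\sup_X h < \sup_\Gamma f$ by compactness of $X$); if $\sup_\Gamma f = +\infty$ the second restriction is vacuous, and in the first case $f_{\infty,\min}\equiv+\infty$ only this second restriction is needed. With this $\sigma_0$, one checks the $\mathcal{C}$-subsolution conditions for the shifted right-hand side $h + 2\sigma_0$: the condition $\mu(x)\in\widetilde{\Gamma}$ is unchanged; $f_{\infty,\min}(\mu(x)) > h(x) + 2\sigma_0$ holds by construction; $h(x) + 2\sigma_0 < \sup_\Gamma f$ holds by the second restriction; and the lower bound $h(x) + 2\sigma_0 > \sup_{\partial\Gamma} f$ follows from $h(x) \ge \sup_{\partial\Gamma} f$ together with $\sigma_0 > 0$, covering both the non-degenerate and degenerate hypotheses on $h$.

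Finally, for the perturbation statement, any $\tilde h$ with $\sup_{\partial\Gamma} f \le \tilde h < \sup_\Gamma f$ and $|\tilde h - h| < 2\sigma_0$ satisfies $\tilde h(x) < h(x) + 2\sigma_0 < f_{\infty,\min}(\mu(x))$ pointwise, while $\mu(x) \in \widetilde{\Gamma}$ still; Definition \ref{subsolution def 2} (equivalently, Definition \ref{subsolution def 1} via Proposition \ref{ASDFS}, since $\tilde h$ is strictly above $\sup_{\partial\Gamma} f$ wherever $\tilde h > \sup_{\partial\Gamma} f$) then gives that $\underline{u}$ is a $\mathcal{C}$-subsolution to $F(A) = \tilde h$. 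Everything is soft — there is no real analytic obstacle; the only thing to be careful about is book-keeping the two one-sided constraints $\sup_{\partial\Gamma} f \le h + 2\sigma_0 < \sup_\Gamma f$ simultaneously, which is why $\sigma_0$ is chosen as the minimum of two positive quantities.
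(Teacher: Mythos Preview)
Your proposal is correct and follows essentially the same approach as the paper: the paper's proof is a one-sentence remark that $f_{\infty,\min}$ is either identically $+\infty$ or finite and concave (hence continuous) on $\widetilde{\Gamma}$, so that $x\mapsto f_{\infty,\min}(\mu(x))$ is continuous on the compact $X$ and a uniform gap exists. You have filled in more detail than the paper does---in particular the paper does not explicitly address the upper constraint $h+2\sigma_0<\sup_\Gamma f$, which you handle correctly by shrinking $\sigma_0$ if necessary.
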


Given this, we will often refer to a $\mathcal{C}$-subsolution to \eqref{degen equation} as a $\mathcal{C}$-subsolution to \eqref{new equation}, leaving context to dictate if the equation is degenerate or not.

\begin{remark}\label{sigma eq 1}
For many geometric PDEs (including the complex Monge-Amp\`ere equation, the complex Hessian equation and the Monge-Amp\`ere equation for $(n-1)$-plurisubharmonic functions), it is easy to see that $\sup_\Gamma f = +\infty$ and $f_{\infty,\min} \equiv +\infty$. This implies that one can always choose $\sigma_0 = 1$ in this case, i.e. it can be taken to be a universal constant independent of $\underline{u}$ and $h$.
\end{remark}

\section{Estimates for the Complex Hessian}\label{section C subsolution}

In this section, we will show that the {\em a priori} estimates of \cite{Szekelyhidi18} hold in the degenerate setting. Specifically, we claim:

\begin{theorem}\label{Szekelyhidi estimates}
Let $u$ be a smooth solution of \eqref{main eqn} with $\sup_X u=-1$. Suppose that $\underline{u}$ is a $\mathcal{C}$-subsolution of \eqref{main eqn}, then there exists a constant $C$ depending only on $(X,\omega)$, $\Gamma$, $f$, $\chi_0$, $\underline{u}$, $\sigma_0$, $\sup_{X}h$, $\sup_{X}|\de h|_g$ and on a lower bound of $\ddbar h$ such that
\[
\sup_{X}|u|+\sup_{X}|\de u|_g+\sup_{X}|\de\dbar u|_g \leq C.
\]
\end{theorem}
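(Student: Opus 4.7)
The approach is to follow Sz\'ekelyhidi's arguments in \cite{Szekelyhidi18} for the $C^0$, $C^1$, and $C^2$ estimates and carefully verify that each step can be arranged to depend only on the quantities listed in the theorem. The conceptual point is supplied by Proposition \ref{sigma 0}: $\underline{u}$ is not merely a $\mathcal{C}$-subsolution of $F(A)=h$ but of the strictly non-degenerate equation $F(A) = h + 2\sigma_0$. Thus $\sigma_0$ plays the role of the quantitative non-degeneracy that, in the non-degenerate case, was implicitly provided by the gap $\inf_X h - \sup_{\de\Gamma}f$. Once this substitution is made, the estimates can be made independent of $\inf_X h$.

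For the $C^0$ estimate, I would invoke the Alexandrov--Bakelman--Pucci-style argument of B\l ocki and Sz\'ekelyhidi, applied to a perturbation of $u - \underline{u}$ on a small coordinate ball around the minimum point of $u - \underline{u}$, and exploit the $\mathcal{C}$-subsolution property. This argument sees $h$ only through $\sup_X h$ and through the quantitative non-degeneracy coming from $\sigma_0$, and is insensitive to $\inf_X h$; so no modification of \cite[Section 3]{Szekelyhidi18} is needed beyond recording that the constants can be arranged this way.

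For the $C^1$ estimate, I would apply the maximum principle to a test function of the form $Q = \log |\de u|_g^2 + \phi(u)$ for a suitable auxiliary $\phi$, as in \cite[Section 4]{Szekelyhidi18}. At the maximum of $Q$, the linearized operator $L = F^{i\ov{j}} \nabla_i \nabla_{\ov{j}}$ produces terms which are controlled using the $C^0$ bound, the quantity $\sup_X|\de h|_g$, and the ellipticity constants supplied by $\sigma_0$; one then contradicts maximality unless $|\de u|_g$ is bounded. Again, no lower bound on $h$ enters.

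The main step is the $C^2$ estimate. Following \cite[Section 5]{Szekelyhidi18}, I would apply the maximum principle to a test function that controls the largest eigenvalue $\lambda_1$ of $A$, perturbed by terms in $|\de u|_g^2$ and $u - \underline{u}$. Differentiating the equation $F(A) = h$ twice and commuting covariant derivatives brings in the lower bound of $\nabla^2 h$ together with curvature terms of $g$, which accounts for these appearing in the constant $C$. At the maximum point, the central input is the quantitative $\mathcal{C}$-subsolution lemma \cite[Proposition 5]{Szekelyhidi18}: either the diagonal coefficients $F^{i\ov{i}}$ are uniformly large in a distinguished direction, or one extracts a definite positive contribution from $F^{i\ov{j}}(\underline{\chi}_{i\ov{j}} - \chi_{i\ov{j}})$. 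The main obstacle will be bookkeeping: one must check that the constants in this lemma, and in the subsequent inequalities (including the choice of the auxiliary functions), depend only on $\sigma_0$, $(X,\omega)$, $\Gamma$, $f$, $\chi_0$, and $\underline{u}$, rather than on the gap $h - \sup_{\de\Gamma} f$. Once this is verified, combining with the $C^0$ and $C^1$ estimates yields $\sup_X|\de\dbar u|_g \leq C$ as claimed.
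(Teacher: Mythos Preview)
Your overall strategy is correct and matches the paper's: the proof amounts to checking that Sz\'ekelyhidi's constants can be made independent of $\inf_X h$, with $\sigma_0$ supplying the quantitative non-degeneracy. The paper makes this explicit by isolating the four specific constants $\kappa,\tau,R,\delta$ appearing in \cite{Szekelyhidi18} and proving separate lemmas that control each one in terms of the data listed in the theorem (in particular, $\kappa$ and $\tau$ via a refinement of \cite[Proposition 5, Lemma 9]{Szekelyhidi18}, and $R,\delta$ via a compactness argument using the $\mathcal{C}$-subsolution for $h+2\sigma_0$). Your identification of the $C^2$ step as the main bookkeeping task, centered on \cite[Proposition 5]{Szekelyhidi18}, is on target.

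There is, however, a genuine error in your description of the $C^1$ estimate. Sz\'ekelyhidi does \emph{not} obtain the gradient bound by a direct maximum principle applied to $\log|\de u|_g^2 + \phi(u)$; for this general class of equations no such argument is available. The actual logical order in \cite{Szekelyhidi18} is: first the $C^0$ estimate, then the complex Hessian estimate in the form $\sup_X|\de\dbar u|_g \leq C\sup_X|\de u|_g^2 + C$, and only then the gradient bound, obtained by a blow-up argument together with a Liouville-type theorem. The paper simply observes that this blow-up step depends only on the preceding constant $C$ and on $(X,\omega)$, $\Gamma$, $f$, $\chi_0$, $\sup_X h$, so no new dependence on $\inf_X h$ enters. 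Your proposed direct maximum-principle route for $C^1$ would in fact run into trouble: such arguments typically require lower bounds on the ellipticity coefficients $F^{i\ov{i}}$ along the solution, and these are exactly what degenerates as $h\to\sup_{\de\Gamma}f$. So you should reorder the argument and replace the $C^1$ paragraph with an appeal to the blow-up method.
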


In \cite{Szekelyhidi18}, Sz\'ekelyhidi established that
\[
\sup_X |u|\leq C \text{ and } \sup_X|\p\pbar u|_g\leq C\sup_X|\p u|^2_g + C
\]
for some constant $C$ depending only on $(X,\omega)$, $\Gamma$, $f$, $\chi_0$, $\underline{u}$, $\sup_{X}h$, $\sup_{X}|\de h|_g$, a lower bound of $\ddbar h$, and on four geometric constants, $\kappa$, $\tau$, $R$, $\delta$, which might depend on $\inf_X h$. He also establishes the $C^1$ estimate using a blow-up argument, which can be seen to depend only on $C$, $(X,\omega)$, $\Gamma$, $f$, $\chi_0$, and $\sup_X h$. Thus, to prove Theorem \ref{Szekelyhidi estimates}, it will be enough to check that the constants $\kappa, \tau, R,$ and $\delta$ can be made independent of $\inf_X h$.

\subsection{The constants $\kappa$ and $\tau$:}

We begin by showing that, if one is more careful with the geometry of the $\mathcal{C}$-subsolution, the constant $\kappa$ in \cite[Proposition 5]{Szekelyhidi18} (which is a refinement of \cite[Theorem 2.16]{Guan14}) can be made quite explicit:

\begin{lemma}
Suppose that $\mu \in \ti{\Gamma}$, $\sigma\in(\sup_{\de\Gamma}f,\sup_{\Gamma}f)$ are such that:
$${\sigma + \sigma_0} < \sup_\Gamma f,\ \ \ \ \mu - 2\delta\mathbf{1}\in\ti{\Gamma},$$
and
\begin{equation}\label{bdd}
(\mu-2\delta\mathbf{1}+\Gamma_n)\cap\de\Gamma^{{\sigma + \sigma_0}} \subseteq B_R(0),
\end{equation}
for some constants $\delta, R, \sigma_0 > 0$. Then there exists a constant $\kappa > 0$, depending only on $\mu$, $\delta$, $R$, $n$ and $\Gamma$ such that, for any $\lambda\in \p\Gamma^{\sigma}$, we either have:
\begin{enumerate}
\setlength{\itemsep}{1mm}
\item $\sum_{i}f_i(\lambda)(\mu_i-\lambda_i)>\kappa\mathcal{F}(\lambda)$, or
\item $f_i(\lambda)>\kappa\mathcal{F}(\lambda)$ for all $i = 1,\ldots, n$,
\end{enumerate}
where here $f_i := \frac{\p f}{\p \lambda_i}$ and $\mathcal{F}(\lambda)=\sum_if_i(\lambda)$.
\end{lemma}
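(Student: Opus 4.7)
Fix $\lambda \in \de\Gamma^\sigma$, and suppose alternative (2) fails. By the symmetry of $f$ we may assume $f_1(\lambda) \leq \kappa \mathcal{F}(\lambda)$. The plan is to deduce alternative (1) from the concavity inequality
\[
\sum_i f_i(\lambda)(\mu'_i - \lambda_i) \geq f(\mu') - f(\lambda) = \sigma_0,
\]
valid for any $\mu' \in \de\Gamma^{\sigma+\sigma_0}$, by plugging in a test vector $\mu'$ that is concentrated in the $\mathbf{e}_1$-direction.

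The construction of $\mu'$ proceeds as follows. A preliminary observation (essentially the converse direction of the proof of Proposition~\ref{ASDFS}) shows that the boundedness hypothesis \eqref{bdd} forces $f_{\infty, i}(\mu - 2\delta\mathbf{1}) > \sigma + \sigma_0$ for every $i$: otherwise, shifts in $\mathbf{e}_i$ corrected by a small multiple of $\mathbf{1}$ would produce unbounded points of the intersection. Set $\nu := \mu - \tfrac{3\delta}{2}\mathbf{1}$; by monotonicity of $f_{\infty,1}$ in the $\mathbf{1}$-direction, $f_{\infty, 1}(\nu) \geq f_{\infty, 1}(\mu - 2\delta\mathbf{1}) > \sigma + \sigma_0$, and since $\nu \in \ti{\Gamma}$, the ray $\{\nu + t\mathbf{e}_1 : t \geq 0\}$ enters $\Gamma$ for $t$ large, with the values of $f$ there running from below $\sup_{\de\Gamma}f \leq \sigma$ up to $f_{\infty, 1}(\nu) > \sigma + \sigma_0$. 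The intermediate value theorem thus supplies $t^* > 0$ with $\mu' := \nu + t^* \mathbf{e}_1 \in \de\Gamma^{\sigma + \sigma_0}$. Since $\mu' - (\mu - 2\delta\mathbf{1}) = \tfrac{\delta}{2}\mathbf{1} + t^*\mathbf{e}_1 \in \Gamma_n$, hypothesis \eqref{bdd} gives $|\mu'| \leq R$ and hence $t^* \leq M := R + |\mu| + \tfrac{3\sqrt{n}\,\delta}{2}$.

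Plugging $\mu'$ into the concavity inequality and expanding $\mu' - \lambda = (\mu - \lambda) - \tfrac{3\delta}{2}\mathbf{1} + t^* \mathbf{e}_1$, together with the assumption $f_1(\lambda) \leq \kappa\mathcal{F}(\lambda)$ and $t^* \leq M$, yields
\[
\sum_i f_i(\lambda)(\mu_i - \lambda_i) \geq \sigma_0 + \tfrac{3\delta}{2}\mathcal{F}(\lambda) - t^* f_1(\lambda) \geq \sigma_0 + \Bigl(\tfrac{3\delta}{2} - M\kappa\Bigr) \mathcal{F}(\lambda).
\]
Choosing $\kappa := \tfrac{3\delta}{2(M+1)}$ gives $\tfrac{3\delta}{2} - M\kappa = \kappa$, so the right-hand side equals $\sigma_0 + \kappa \mathcal{F}(\lambda) > \kappa \mathcal{F}(\lambda)$ since $\sigma_0 > 0$, establishing alternative (1). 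Crucially, $\sigma_0$ enters only as positive additive slack and never appears in the formula for $\kappa$, so the resulting $\kappa$ depends only on $\mu, \delta, R, n$ (with $\Gamma$ entering structurally through $\Gamma_n \subseteq \Gamma$ and the position of $\de\Gamma$).

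The step I expect to require the most care is the preliminary fact $f_{\infty, i}(\mu - 2\delta\mathbf{1}) > \sigma + \sigma_0$ and the accompanying existence of $t^*$: both need careful bookkeeping with the boundary $\de\Gamma$, the values of $\sup_{\de\Gamma}f$ and $\sup_\Gamma f$, and the asymptotic functions $f_{\infty, i}$, but all the required tools are already developed in Section~2.
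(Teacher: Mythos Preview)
Your argument is correct and takes a genuinely different route from the paper's proof. The paper proceeds geometrically: it introduces for each $v$ in a certain set $A_\delta$ an auxiliary cone $\mathcal{C}_v$ whose cross-section is $(\mu-2\delta\mathbf{1}+\ov{\Gamma_n})\cap\de\Gamma^{\sigma+\sigma_0}$, shows via an explicit computation that every vector in the dual cone $\mathcal{C}_v^*$ has all components comparable to its norm, and then splits into two cases according to whether the tangent hyperplane $T_\lambda$ to $\de\Gamma^\sigma$ meets $A_\delta$; in the first case the normal $\mathbf{n}_\lambda=(f_1,\ldots,f_n)$ lies in some $\mathcal{C}_v^*$, giving alternative (2), and in the second $\mu-\delta\mathbf{1}$ lies in the upper half-space $H_\lambda$, giving alternative (1). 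Your approach is more direct and analytic: you assume alternative (2) fails at some index, build a single test point $\mu'=\nu+t^*\mathbf{e}_1\in\de\Gamma^{\sigma+\sigma_0}$ by pushing along that coordinate axis, and plug it into the concavity inequality. The paper's dual-cone picture gives a more structural explanation of why the dichotomy arises, while your method is shorter and makes the explicit dependence of $\kappa$ on $\mu,\delta,R,n$ immediately transparent. One small point to tidy up: your intermediate-value argument for $t^*$ tacitly assumes $\nu\notin\Gamma$ or $f(\nu)<\sigma+\sigma_0$; if instead $\nu\in\Gamma$ with $f(\nu)\geq\sigma+\sigma_0$, simply take $\mu'=\nu$ (i.e.\ $t^*=0$), and the rest of the computation goes through unchanged and in fact yields alternative (1) without even invoking the failure of (2).
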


\begin{proof}
We start with a claim:

{\bf Claim:} Suppose that $\lambda'\in (\mu-2\delta\mathbf{1}+\ov{\Gamma_n})\cap (\R^n\setminus \ov{\Gamma^{\sigma + \sigma_0}})$. Then for any non-zero $v\in\ov{\Gamma_n}$, there exists a $t > 0$ such that $\lambda' + tv\in\Gamma$ and $f(\lambda' + tv) \geq {\sigma + \sigma_0}$.

{\bf Proof of Claim:} That $\lambda' + tv\in\Gamma$ for any non-zero $v\in\ov{\Gamma_n}$ and $t$ sufficiently large follows immediately from the fact that $\mu-2\delta\mathbf{1}\in\ti{\Gamma}$.

Suppose then for the sake of a contradiction that the other conclusion does not hold, i.e. that there exists a non-zero $v\in\ov{\Gamma_n}$ such that $f(\lambda' + tv) < {\sigma + \sigma_0}$ for all $t > 0$ sufficiently large. By property (ii) of $f$, for each $t$ there exists a $w_t\in\Gamma_n$ such that $f(\lambda' + tv + w_t) = {\sigma + \sigma_0}$, i.e. $\lambda' + tv + w_t \in \p\Gamma^{{\sigma + \sigma_0}}$. Since $tv + w_t \in \Gamma_n$, it follows that:
\[
\lambda ' + tv + w_t \in (\mu - 2\delta\mathbf{1}+\Gamma_n)\cap\p\Gamma^{{\sigma + \sigma_0}} \ \ \text{for each $t$}.
\]
By \eqref{bdd}, the above set is bounded though, so we have:
\[
|\lambda ' + tv + w_t| \leq R \ \ \text{for each $t$},
\]
which is clearly absurd, as $|tv + w_t|\rightarrow\infty$ as $t\rightarrow\infty$.

\bigskip

We now begin the proof proper. Consider the set:
\[
A_\delta := (\mu - \delta\mathbf{1} + \ov{\Gamma_n})\cap(\R^n\setminus\ov{\Gamma^{\sigma + \sigma_0}}).
\]
Suppose for now that $A_\delta$ is non-empty -- we will deal with the case $A_{\delta}=\emptyset$ at the end of the proof. The assumption that $A_\delta\not=\emptyset$ is clearly equivalent to assuming that $\mu-\delta\mathbf{1}\in\R^n\setminus\ov{\Gamma^{\sigma + \sigma_0}}$.

For each $v\in A_\delta$, we define the associated convex cone:
\[
\mathcal{C}_v := \{ w\in \R^n\ |\ v + tw\in (\mu - 2\delta\mathbf{1}+\ov{\Gamma_n})\cap \ov{\Gamma^{{\sigma + \sigma_0}}}\text{ for some }t>0\}.
\]
Applying the above claim twice (once with $\lambda'=\mu-\delta\mathbf{1}$ and once with $\lambda'=\mu-2\delta\mathbf{1}$), we see that
\begin{itemize}
\item $\ov{\Gamma_n}\setminus\{0\}\subset \mathcal{C}_{\mu- \delta\mathbf{1}}$, and
\item for each index $i=1,\ldots, n$, set:
\[
T_i := \{t  > 0\ |\ \mu - 2\delta\mathbf{1} + t\mathbf{e}_i \in \ov{\Gamma^{\sigma + \sigma_0}}\}
\]
is non-empty. Set $t_i := \min T_i$.
\end{itemize}

Let $\mathbf{1}_{\hat{i}} := \sum_{j\not=i}\mathbf{e}_j$. The second point implies that we can actually find a larger cone than $\ov{\Gamma_n}\setminus\{0\}$ inside $\mathcal{C}_{\mu-\delta\mathbf{1}}$ by rewriting:
\[
\mu - \delta\mathbf{1} + (t_i - \delta)\mathbf{e}_i - \delta\mathbf{1}_{\hat{i}} = \mu - 2\delta\mathbf{1} + t_i\mathbf{e}_i \in (\mu - 2\delta\mathbf{1}+\ov{\Gamma_n})\cap \ov{\Gamma^{{\sigma + \sigma_0}}}.
\]
By definition, we thus get:
\[
(t_i - \delta)\mathbf{e}_i - \delta\mathbf{1}_{\hat{i}} \in \mathcal{C}_{\mu - \delta\mathbf{1}}.
\]

It follows now that if $w = (w_1,\ldots, w_n) \in\mathcal{C}_{\mu - \delta\mathbf{1}}^*$ (the dual cone of $\mathcal{C}_{\mu-\delta\mathbf{1}}$) then we must have:
\[
\langle w, (t_i - \delta)\mathbf{e}_i - \delta\mathbf{1}_{\hat{i}}\rangle \geq 0.
\]
Since $\Gamma_n\subset\mathcal{C}_{\mu-\delta\mathbf{1}}$, then $\mathcal{C}_{\mu-\delta\mathbf{1}}^{*}\subset\Gamma_{n}$. Each component of $w$ must be positive, so we have:
\[
t_i w_i \geq (t_i - \delta) w_i \geq \delta \sum_{j\not = i} w_j.
\]
Squaring both sides and dropping cross-terms gives:
\[
t_i^2 w_i^2 \geq \delta^2\left(\sum_{j\not= i} w_j\right)^2 \geq \delta^2\sum_{j\not=i} w_j^2 = \delta^2 (|w|^2 - w_i^2),
\]
implying
\[
w_i^2 \geq \frac{\delta^2}{t_i^2 + \delta^{2}}|w|^2.
\]

It is clear that $\mu - 2\delta\mathbf{1} + t_i\mathbf{e}_i \in \p\Gamma^{\sigma + \sigma_0}$. By \eqref{bdd}, $|\mu - 2\delta\mathbf{1} + t_i\mathbf{e}_i| \leq R$, so:
\[
t_i \leq R + |\mu - 2\delta\mathbf{1}| \leq C(\mu, \delta, R, n).
\]
It thus follows that:
\[
w_i \geq \frac{1}{C} |w|
\]
for any vector $w\in\mathcal{C}_{\mu - \delta\mathbf{1}}^*$ and some constant $C$ independent of $\sigma$ and $\sigma_0$.

\bigskip

We now show the same result holds for any $v\in A_\delta$, i.e.,
\[
w_i \geq \frac{1}{C} |w| \ \ \text{for any $w\in\mathcal{C}_{v}^*$}.
\]
This will follow immediately if we can show:
\[
\mathcal{C}_{\mu-\delta\mathbf{1}} \subseteq \mathcal{C}_{v}.
\]
This is quite clear however-- suppose that $w\in \mathcal{C}_{\mu - \delta\mathbf{1}}$, so that there exists $t > 0$ with:
\[
\mu - \delta\mathbf{1} + tw \in (\mu - 2\delta\mathbf{1}+\ov{\Gamma_n})\cap \ov{\Gamma^{{\sigma + \sigma_0}}}.
\]
Since $v\in A_\delta$, then there exists a $w_2\in\ov{\Gamma_n}$ with $\mu - \delta\mathbf{1} + w_2 = v$. It follows then that:
\[
{\sigma + \sigma_0} \leq f(\mu - \delta\mathbf{1} + tw) \leq f(v + tw),
\]
so $w\in\mathcal{C}_{v}$ by definition.

\bigskip

We may conclude as follows (we now drop the assumption that $A_\delta$ is non-empty). Let $\lambda\in\de\Gamma^{\sigma}$ and define $T_\lambda$ to be the tangent plane to $\de\Gamma^{\sigma}$ at $\lambda$ with inward normal vector
\[
\mathbf{n}_\lambda = \left(f_1(\lambda),\dots,f_n(\lambda)\right).
\]
Define $H_\lambda$ to be the upper-half space:
\[
H_\lambda = \{w\in\mathbb{R}^n~|~(w-\lambda)\cdot\mathbf{n}_\lambda\geq0\}.
\]
Note that $\ov{\Gamma^{\sigma+\sigma_0}}\subseteq \ov{\Gamma^{\sigma}} \subseteq H_\lambda$, by concavity of $f$. We now have two cases:

\bigskip
\noindent
{\bf Case 1:} $T_\lambda\cap A_\delta\neq\emptyset$.

\bigskip

In this case, choose some $v\in T_\lambda\cap A_\delta$. By definition, the cone $v + \mathcal{C}_v$ has cross section $(\mu-2\delta\mathbf{1}+\ov{\Gamma_n})\cap\de\Gamma^{{\sigma + \sigma_0}}$, so clearly
\[
v+\mathcal{C}_v\subset H_\lambda.
\]
It follows then that $\mathbf{n}_\lambda\in\mathcal{C}_v^*$ (as $(v - \lambda)\cdot\mathbf{n}_\lambda = 0$), so we get
\[
f_i(\lambda) \geq \frac{1}{C}|\nabla f|.
\]
Combining this with $|\nabla f| \geq \frac{1}{\sqrt{n}}\mathcal{F} > 0$ (cf. \cite[(19)]{Szekelyhidi18}), we see that (2) holds.

\bigskip
\noindent
{\bf Case 2:} $T_\lambda\cap A_\delta=\emptyset$.

\bigskip

There are two possibilities: either $A_\delta=\emptyset$ or $A_\delta\not=\emptyset$ and $T_\lambda\cap A_\delta = \emptyset$.

\noindent As mentioned previously, $A_\delta = \emptyset$ if and only if $\mu-\delta\mathbf{1}\in\ov{\Gamma^{\sigma + \sigma_0}}\subseteq H_\lambda$. If alternatively $A_\delta\neq\emptyset$, then $A_\delta$ lies on one side of $T_\lambda$, as $A_\delta\cap T_\lambda = \emptyset$. Thanks to the claim, $A_\delta\cap\ov{\Gamma^{\sigma}}\not=\emptyset$, so we must have $A_\delta\subseteq H_\lambda$.

Thus, in both cases we have $\mu-\delta\mathbf{1}\in H_\lambda$. It then follows that
\[
(\mu-\delta\mathbf{1}-\lambda)\cdot\mathbf{n}_\lambda \geq 0,
\]
which implies
\[
\sum_{i} f_i(\lambda)\,(\mu_i - \lambda_i) \geq \delta\mathcal{F}(\lambda),
\]
finishing the proof.
\end{proof}

The next lemma is essentially the same as \cite[Lemma 9]{Szekelyhidi18}.

\begin{lemma}
For any $\sup_{\de\Gamma}f<\sigma\leq\Lambda<\sup_{\Gamma}f$, there exists a constant $\tau > 0$ depending only on $\Lambda$, $f$ and $\Gamma$ such that
\[
\mathcal{F}(\lambda) > \tau \quad \text{for any $\lambda\in\de\Gamma^\sigma$}.
\]
\end{lemma}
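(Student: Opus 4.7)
The plan is to combine the concavity of $f$ with property (iii), applied along two different lines through a given point $\lambda \in \partial\Gamma^{\sigma}$: one from $\lambda$ to a fixed point deep inside the super-level set $\{f > \Lambda\}$, and one along the ray $t \mapsto t\lambda$ generated by $\lambda$ itself.

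First, since $\Lambda < \sup_{\Gamma} f$, I would fix some $\Lambda' \in (\Lambda, \sup_{\Gamma} f)$, and then apply property (iii) to $\mathbf{1} \in \Gamma_n \subseteq \Gamma$ to produce a constant $c > 0$, depending only on $f$, $\Gamma$ and $\Lambda$, with $f(c\mathbf{1}) > \Lambda'$. For any $\lambda \in \partial\Gamma^{\sigma}$, the concavity of $f$ at $\lambda$ in the direction of $c\mathbf{1}$ yields
\[
\Lambda' - \sigma \;<\; f(c\mathbf{1}) - f(\lambda) \;\leq\; \nabla f(\lambda)\cdot(c\mathbf{1} - \lambda) \;=\; c\,\mathcal{F}(\lambda) - \sum_i f_i(\lambda)\lambda_i.
\]
Hence, provided one can show $\sum_i f_i(\lambda)\lambda_i \geq 0$, the conclusion follows with $\tau = (\Lambda' - \Lambda)/c$, which is manifestly independent of $\sigma$. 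The reason for introducing the auxiliary $\Lambda'$ is precisely to keep $\Lambda' - \sigma$ uniformly positive as $\sigma$ ranges over $(\sup_{\partial\Gamma} f, \Lambda]$, including the borderline case $\sigma = \Lambda$.

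The only remaining point, and the main (if modest) obstacle, is establishing the non-negativity of $\sum_i f_i(\lambda)\lambda_i$; this is where the cone structure of $\Gamma$ enters. I would consider the one-variable function $g(t) := f(t\lambda)$ on $(0,\infty)$, which is concave as a composition of $f$ with a linear map and satisfies $g(1) = \sigma$. Property (iii), applied to $\lambda \in \Gamma$ with threshold $\sigma < \sup_{\Gamma} f$, gives $\lim_{t\to\infty} g(t) > \sigma = g(1)$, so there exists some $t^{*} > 1$ with $g(t^{*}) > g(1)$. A concave function on $(0, \infty)$ cannot simultaneously satisfy $g'(1) < 0$ and exceed $g(1)$ somewhere to the right of $1$, because then $g$ would be non-increasing on $[1, \infty)$. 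Hence $g'(1) = \sum_i f_i(\lambda)\lambda_i \geq 0$, which closes the argument.
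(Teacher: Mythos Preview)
Your proof is correct. Both arguments pick an auxiliary level $\Lambda'\in(\Lambda,\sup_\Gamma f)$ and use concavity to compare $f(\lambda)$ with a point known to lie in $\Gamma^{\Lambda'}$, but they choose that point differently. The paper (following \cite[Lemma 9]{Szekelyhidi18}) finds a single constant $N$ with $\Gamma+N\mathbf{1}\subset\Gamma^{\Lambda'}$ and then compares $f(\lambda+N\mathbf{1})$ with $f(\lambda)$; since the displacement is exactly $N\mathbf{1}$, concavity gives $\Lambda'-\sigma<N\mathcal{F}(\lambda)$ directly, with no leftover term. You instead compare $f(\lambda)$ with a fixed reference point $c\mathbf{1}$, which produces the extra term $-\sum_i f_i(\lambda)\lambda_i$ that you then control by the nice Euler-type observation $g'(1)\geq 0$ along the ray $t\mapsto f(t\lambda)$. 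Your route is a bit longer but fully self-contained (it does not invoke the cited lemma), and the inequality $\sum_i f_i(\lambda)\lambda_i\geq 0$ is worth noting in its own right; the paper's route is shorter precisely because translating by $N\mathbf{1}$ rather than moving to a fixed point eliminates that term altogether.
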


\begin{proof}
Take $\sigma'\in(\Lambda,\sup_{\Gamma}f)$. By \cite[Lemma 9]{Szekelyhidi18}, there exists a constant $N$ depending only on $\Lambda$, $f$, and $\Gamma$ such that
\[
\Gamma+N\mathbf{1} \subset \Gamma^{\sigma'}.
\]
Using the concavity of $f$, for any $\lambda\in\de\Gamma^\sigma$,
\[
\sigma' < f(\lambda+N\mathbf{1}) \leq f(\lambda)+N\sum_{i}f_i(\lambda) = \sigma+N\mathcal{F}(\lambda).
\]
This implies
\[
\mathcal{F}(\lambda) \geq \frac{\sigma'-\sigma}{N} \geq \frac{\sigma'-\Lambda}{N}.
\]
\end{proof}

\subsection{The constants $\delta$ and $R$}

We now show the existence of appropriate $\delta$ and $R$:

\begin{lemma}
Let $h$ be a smooth function on $X$ such that $\sup_{\de\Gamma}f< h\leq\Lambda<\sup_{\Gamma}f$ and $\sup_{X}|\de h|_g\leq\Lambda$. Suppose that $\underline{u}$ is a $\mathcal{C}$-subsolution of $F(A)=h+2\sigma_0$. Then there exist constants $\delta$ and $R$, depending only on $(X,\omega)$, $\Gamma$, $f$, $\chi_0$, $\underline{u}$, $\sigma_0$, and $\Lambda$, such that for any $x\in X$,
\[
(\mu(x)-2\delta\mathbf{1}+\Gamma_n)\cap\de\Gamma^{h(x)+\sigma_0} \subset B_R(0).
\]
\end{lemma}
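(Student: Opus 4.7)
The plan is to translate the desired bounded-cross-section conclusion into the equivalent conditions of Proposition \ref{ASDFS}/Definition \ref{subsolution def 2} and then extract uniform constants from the compactness of $X$. Since $\underline{u}$ is a $\mathcal{C}$-subsolution of $F(A) = h + 2\sigma_0$, we know that $\mu(x)\in\widetilde{\Gamma}$ and $f_{\infty,\min}(\mu(x)) > h(x) + 2\sigma_0$ for every $x\in X$. The map $\mu : X \to \mathbb{R}^n$ is continuous, so $K := \mu(X)\subset\widetilde{\Gamma}$ is compact; hence there is $\delta_0 > 0$ with $\mu(x) - 2\delta\mathbf{1}\in\widetilde{\Gamma}$ for all $x\in X$ and all $\delta\in(0,\delta_0]$. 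The function $f_{\infty,\min}$ is either identically $+\infty$ on $\widetilde{\Gamma}$ or is finite, concave, and hence locally Lipschitz. Setting $G_0 := \inf_X\bigl(f_{\infty,\min}(\mu(x)) - h(x) - 2\sigma_0\bigr) > 0$ (strictly positive by compactness and continuity), after further shrinking $\delta$ uniformly in $x$ we can arrange
\[
f_{\infty,\min}(\mu(x) - 2\delta\mathbf{1}) > f_{\infty,\min}(\mu(x)) - \sigma_0 > h(x) + \sigma_0,
\]
with $\delta$ depending only on $(X,\omega)$, $\Gamma$, $f$, $\chi_0$, $\underline{u}$, $\sigma_0$, and $\Lambda$.

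For the uniform bound $R$, I would argue by contradiction. If no such $R$ existed, we could find sequences $x_j\in X$ and $v_j\in(\mu(x_j) - 2\delta\mathbf{1} + \Gamma_n)\cap\partial\Gamma^{h(x_j) + \sigma_0}$ with $|v_j|\to\infty$. Write $v_j = \mu(x_j) - 2\delta\mathbf{1} + w_j$ with $w_j\in\Gamma_n$; since $\mu(x_j)$ is bounded, $|w_j|\to\infty$, and passing to a subsequence we may assume $x_j\to x_\infty\in X$ and $w_j^{i_0}\to+\infty$ for some fixed index $i_0$. Mimicking the converse direction of the proof of Proposition \ref{ASDFS}, for any large fixed $t>0$ we eventually have $w_j^{i_0} > t$; since the remaining coordinates of $w_j$ are positive, $v_j - (\mu(x_j) - 2\delta\mathbf{1} + t\mathbf{e}_{i_0})\in\overline{\Gamma_n}$, and the monotonicity (ii) of $f$ gives
\[
h(x_j) + \sigma_0 = f(v_j) \geq f(\mu(x_j) - 2\delta\mathbf{1} + t\mathbf{e}_{i_0}).
\]
Letting $j\to\infty$ (using continuity of $h$, $\mu$, and $f$) and then $t\to\infty$ produces
\[
h(x_\infty) + \sigma_0 \geq f_{\infty,i_0}(\mu(x_\infty) - 2\delta\mathbf{1}) \geq f_{\infty,\min}(\mu(x_\infty) - 2\delta\mathbf{1}),
\]
contradicting the strict inequality established in the previous paragraph.

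The main technical point I expect to be delicate is ensuring that $\mu(x_j) - 2\delta\mathbf{1} + t\mathbf{e}_{i_0}$ actually lies in $\Gamma$ for all large $t$ \emph{uniformly in $j$}, so that $f$ is evaluated in its domain before one takes the limit. This is handled by covering the compact set $\mu(X) - 2\delta\mathbf{1}\subset\widetilde{\Gamma}$ by finitely many open sets $U_\alpha$ with $U_\alpha + T_\alpha\mathbf{e}_i\subset\Gamma$ for every $i=1,\dots,n$, and taking $T := \max_\alpha T_\alpha$; for $t > T$ the evaluation point lies in $\Gamma$ regardless of $j$. The case $f_{\infty,\min}\equiv+\infty$ requires no separate treatment, as both the existence of $\delta$ and the final contradiction are then immediate once $\mu(x) - 2\delta\mathbf{1}\in\widetilde{\Gamma}$ is secured.
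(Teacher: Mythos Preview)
Your overall strategy---choosing $\delta$ first via local Lipschitz continuity of $f_{\infty,\min}$ on the compact set $\mu(X)$, then obtaining $R$ by a contradiction argument mimicking the converse direction of Proposition~\ref{ASDFS}---is sound and close in spirit to the paper's proof. There is, however, a genuine gap concerning uniformity in $h$.

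The lemma asserts that $R$ depends only on $\Lambda$ (together with $(X,\omega),\Gamma,f,\chi_0,\underline{u},\sigma_0$), \emph{not} on the particular function $h$. Your contradiction argument keeps $h$ fixed throughout: negating ``there exists $R$'' produces sequences $x_j$ and $v_j$ for that single $h$, and the limiting contradiction $h(x_\infty)+\sigma_0\ge f_{\infty,\min}(\mu(x_\infty)-2\delta\mathbf{1})$ is derived using continuity of that $h$. This shows only that each admissible $h$ admits \emph{some} $R=R(h)$; nothing in the argument bounds $R(h)$ in terms of $\Lambda$ alone. Since contradiction arguments do not produce explicit constants, you cannot simply assert after the fact that the resulting $R$ is uniform.

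The paper closes this gap by allowing $h$ to vary in the contradiction: one takes sequences $h_i$ satisfying the bounds, points $x_i$, and $v_i$ with $|v_i|\to\infty$ (the paper also lets $\delta=i^{-1}\to0$ simultaneously, but your prior uniform choice of $\delta$ would work equally well). A new subtlety then arises that your fixed-$h$ argument never encounters: the $h_i$ are bounded above by $\Lambda$ but possibly not below (since $\sup_{\de\Gamma}f$ may be $-\infty$). Using the equi-Lipschitz bound $\sup_X|\de h_i|_g\le\Lambda$ and Arzel\`a--Ascoli, one extracts either a uniformly convergent subsequence $h_i\to h_\infty$, in which case your limiting inequality still yields a contradiction against the $\mathcal{C}$-subsolution hypothesis, or else $\sup_X h_i\to-\infty$, which forces $f_{\infty,\min}(\mu(x_\infty)-2\delta\mathbf{1})=-\infty$ and contradicts the fact that $f_{\infty,\min}$ is either everywhere finite or identically $+\infty$ on $\ti{\Gamma}$.
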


\begin{proof}
We argue by contradiction. Suppose that there exist sequences $\{h_i\}\in C^\infty(X)$ and $\{x_i\}\in X$ such that:
\begin{enumerate}[(a)]
\item $\sup_{\de\Gamma}f<h_i\leq\Lambda$ and $\sup_{X}|\de h_i|_g\leq\Lambda$,
\item $\underline{u}$ is a $\mathcal{C}$-subsolution of $F(A)=h_i+2\sigma_0$, and
\item there exists $v_i = \sum_{j=1}^n v_i^j\mathbf{e}_j\in\Gamma_n$ with $|v_i|>i$ such that $\mu(x_i)-2i^{-1}\mathbf{1}+v_i\in\Gamma$ and
\[
f(\mu(x_i)-2i^{-1}\mathbf{1}+v_i) = h_i(x_i)+\sigma_0.
\]
\end{enumerate}
After passing to a subsequence, we may assume that $x_i \rightarrow x_\infty\in X$. Fix an $\e > 0$ so that $\mu(x_\infty) - \e\mathbf{1}\in\ti{\Gamma}$ (which we can do since $\ti{\Gamma}$ is open). Then for all $i$ large enough, we have:
\[
\mu(x_i) - 2i^{-1}\mathbf{1} \in \mu(x_\infty) - \e\mathbf{1}+\Gamma_{n}.
\]
Since $v_i\in\Gamma_n$ and $|v_i| > i$, after passing to an additional subsequence, we can assume that $v_i^j \rightarrow\infty$ for some fixed index $j\in\{1,\ldots, n\}$. It follows then that:
\[
\begin{split}
f_{\infty, \min}&(\mu(x_\infty) - \e\mathbf{1}) \leq f_{\infty, j}(\mu(x_\infty) - \e\mathbf{1}) = \lim_{i\rightarrow\infty} f(\mu(x_\infty) - \e\mathbf{1} + v_i^j\mathbf{e}_j)\\
& \leq \lim_{i\rightarrow\infty} f(\mu(x_i) - 2i^{-1}\mathbf{1} + v_i) = \lim_{i\rightarrow\infty} h_i(x_i) + \sigma_0.
\end{split}
\]

By the Arzel\`a-Ascoli theorem, after passing to a subsequence once more, we can assume that the $h_i$ either converge uniformly to some $h_\infty\in C(X)$ or that $\sup_X h_i\searrow-\infty$, as the $h_i$ are uniformly bounded above by $\Lambda$. In the second case, we conclude that $f_{\infty, \min}(\mu(x_\infty) - \e\mathbf{1}) = -\infty$, which violates previously stated properties of $f_{\infty,\min}$ (see the paragraph above Proposition \ref{ASDFS}). Thus, we must have that $h_i \xrightarrow{C_0} h_\infty\in C(X)$, so that:
\[
f_{\infty,\min} (\mu(x_\infty) - \e\mathbf{1}) \leq h_\infty(x_\infty) + \sigma_0.
\]
Letting $\e\rightarrow 0$ gives:
\[
f_{\infty,\min} (\mu(x_\infty)) \leq h_\infty(x_\infty) + \sigma_0.
\]
But since $\underline{u}$ is a $\mathcal{C}$-subsolution to $F(A) = h_i + 2\sigma_0$ for each $i$, we must have:
\[
f_{\infty,\min}(\mu(x_\infty)) > h_i(x_\infty) + 2\sigma_0.
\]
Letting $i\rightarrow\infty$ gives
\[
f_{\infty,\min}(\mu(x_\infty) \geq h_\infty(x_\infty) + 2\sigma_0,
\]
which is a contradiction, as desired.
\end{proof}

Substituting the above lemmas into the proof of \cite[Theorem 2.18]{Guan14} (see also \cite[Proposition 6]{Szekelyhidi18}), we then obtain the following proposition:
\begin{proposition}\label{subsolution prop}
Let $u$ be a smooth solution of \eqref{main eqn} and suppose that $\underline{u}$ is a $\mathcal{C}$-subsolution of \eqref{main eqn}. Set
\[
\underline{\chi} = \chi_0+\ddbar\underline{u}.
\]
Then there exist positive constants $\kappa$ and $\tau$ depending only on $(X,\omega)$, $\Gamma$, $f$, $\chi_0$, $\underline{u}$, $\sigma_0$, $\sup_{X}h$, $\sup_{X}|\de h|_{g}$ such that at each $x\in X$, both:
\begin{enumerate}
\item either $\sum_{i}F^{i\ov{j}}(\underline{\chi}_{i\ov{j}}-\chi_{i\ov{j}})>\kappa\mathcal{F}$, or $F^{i\ov{i}}>\kappa\mathcal{F}$ for all $i$; and
\item $\mathcal{F}\geq\tau$,
\end{enumerate}
where $F^{i\ov{j}}$ is the first derivative of $F$ and $\mathcal{F}=\sum_{i}F^{i\ov{i}}$.
\end{proposition}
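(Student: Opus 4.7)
The plan is to assemble the three preceding lemmas and transfer the eigenvalue-level conclusions to the matrix-level conclusions; this is essentially the reduction used in \cite[Theorem 2.18]{Guan14} and \cite[Proposition 6]{Szekelyhidi18}, but with all constants now traced through the data listed in the statement.

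Part (2) is immediate: since $u$ solves $F(A) = h$, we have $\lambda(A(u))(x) \in \de \Gamma^{h(x)}$, and $h \leq \sup_X h < \sup_\Gamma f$, so the $\tau$-lemma applied with $\Lambda = \sup_X h$ yields $\mathcal{F} \geq \tau$ for a $\tau$ depending only on the listed data. For part (1), I would first invoke Proposition \ref{sigma 0} to upgrade $\underline u$ to a $\mathcal{C}$-subsolution of $F(A) = h + 2\sigma_0$, then apply the $\delta, R$-lemma (with $\Lambda = \max(\sup_X h, \sup_X |\de h|_g)$) to produce constants $\delta, R > 0$ such that at every $x \in X$,
\[
\mu(x) - 2\delta \mathbf{1} \in \ti{\Gamma}, \qquad (\mu(x) - 2\delta\mathbf{1} + \Gamma_n) \cap \de \Gamma^{h(x) + \sigma_0} \subset B_R(0).
\]
These are precisely the hypotheses of the $\kappa$-lemma with $\sigma = h(x)$, giving a uniform $\kappa > 0$ such that, at each $x$, the eigenvalues $\lambda = \lambda(A(u))(x)$ satisfy either $\sum_i f_i(\lambda)(\mu_i - \lambda_i) > \kappa \mathcal{F}(\lambda)$ or $f_i(\lambda) > \kappa \mathcal{F}(\lambda)$ for every $i$.

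It remains to translate this eigenvalue alternative into the matrix alternative. Fix at $x$ a unitary frame diagonalizing $\chi$ with $\lambda_1 \geq \cdots \geq \lambda_n$; then $F^{i\ov{j}} = f_i(\lambda)\, \delta_{ij}$ and $\chi_{i\ov{i}} = \lambda_i$, so the second alternative becomes $F^{i\ov{i}} > \kappa \mathcal{F}$ for all $i$ directly. For the first alternative, concavity of $f$ gives $f_1 \leq \cdots \leq f_n$, and the Schur--Horn majorization of the diagonal of $\underline{\chi}$ by its eigenvalues, combined with Abel summation, yields
\[
\sum_i f_i(\lambda)\, \underline{\chi}_{i\ov{i}} \geq \sum_i f_i(\lambda)\, \mu_i^{\downarrow},
\]
where $\mu^{\downarrow}$ denotes the decreasing rearrangement of the eigenvalues of $\underline{\chi}$. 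Since the hypotheses of the $\kappa$-lemma are symmetric in the components of $\mu$, we may apply it with this particular ordering to obtain
\[
\sum_{i,j} F^{i\ov{j}}(\underline{\chi}_{i\ov{j}} - \chi_{i\ov{j}}) = \sum_i f_i(\lambda)(\underline{\chi}_{i\ov{i}} - \lambda_i) \geq \sum_i f_i(\lambda)(\mu_i^{\downarrow} - \lambda_i) > \kappa \mathcal{F},
\]
as required. The only nontrivial step is the Schur--Horn reduction, which is entirely standard; the substantive work has been done in the three preceding lemmas, particularly in verifying that $\delta$ and $R$ remain uniform as $h$ degenerates, so no real obstacle remains in assembling the pieces.
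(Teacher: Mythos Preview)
Your proposal is correct and follows essentially the same approach as the paper, which simply defers to \cite[Theorem 2.18]{Guan14} and \cite[Proposition 6]{Szekelyhidi18} after assembling the three preceding lemmas. The Schur--Horn/rearrangement step you spell out to pass from the eigenvalue alternative to the matrix alternative is the standard reduction used in those references, and your tracking of the dependence of $\kappa$, $\tau$, $\delta$, $R$ on only the listed data (in particular via $\sigma_0$ rather than $\inf_X h$) is exactly the point of this section.
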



\section{Proof of Theorem \ref{main estimate}}

To prove Theorem \ref{main estimate}, it suffices to establish the following real Hessian estimate:
\begin{theorem}\label{real Hessian estimate}
Let $u$ be a smooth solution of \eqref{main eqn} with $\sup_Xu=-1$. Suppose that $\underline{u}$ is a $\mathcal{C}$-subsolution of \eqref{main eqn}, then there exists a constant $C$ depending only on $(X,\omega)$, $\sup_{X}|u|$, $\sup_{X}|\de u|_g$, $\sup_{X}|\de\dbar u|_g$, $\chi_0$, $\underline{u}$, $\sigma_0$, $\sup_{X}h$, $\sup_{X}|\de h|_g$ and on a lower bound of $\nabla^2 h$ such that
\[
\sup_{X}|\nabla^2 u|_g \leq C.
\]
\end{theorem}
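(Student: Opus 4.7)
The plan is to apply the maximum principle to a test function of the form
\[
Q := \log\lambda_{1}\!\bigl(\nabla^{2}u\bigr)+\varphi\bigl(|\partial u|_{g}^{2}\bigr)+\psi(u)+\tau\,\eta\bigl(\nabla^{2}u\bigr),
\]
where $\lambda_{1}(\nabla^{2}u)$ is the largest eigenvalue of the real Hessian of $u$, $\varphi$ is a strictly convex function with $\varphi'$ uniformly positive and bounded, $\psi(u)=Au$ for a large constant $A$, the auxiliary term $\eta$ is a suitable real-Hessian quadratic quantity in the spirit of \cite{Chu18}, and $\tau>0$ is a small parameter. Since $\lambda_{1}$ is generally only Lipschitz when it has multiplicity, I would perturb the real Hessian by a fixed symmetric endomorphism, as in \cite{CTW19}, to arrange that at the maximum point $x_{0}$ of $Q$ the top eigenvalue is simple and smooth, with a well-defined unit real eigenvector $V_{1}$.

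At $x_{0}$, pick normal coordinates for $g$ and a unitary frame for $T^{1,0}X$ diagonalising both $g_{i\ov{j}}$ and $\chi_{i\ov{j}}$, with $\chi_{1\ov{1}}\geq\cdots\geq\chi_{n\ov{n}}$. In this frame, $F^{i\ov{j}}$ is diagonal with entries $F^{i\ov{i}}>0$, and Proposition \ref{subsolution prop} supplies either the alternative $\sum_{i}F^{i\ov{i}}(\underline{\chi}_{i\ov{i}}-\chi_{i\ov{i}})>\kappa\mathcal{F}$ or $F^{i\ov{i}}\geq\kappa\mathcal{F}$ for every $i$, together with $\mathcal{F}\geq\tau_{0}$. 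Writing $L:=F^{i\ov{i}}\nabla_{i}\nabla_{\ov{i}}$, differentiating the equation twice in $V_{1}$ and using the usual identity for $L\log\lambda_{1}$ (see \cite{CTW19}) produces the good concavity term $G$ from \eqref{good terms}, plus the delinquent third-order quantity $B$ from \eqref{bad terms}, together with $O(\mathcal{F}/\lambda_{1})$ error terms coming from $h_{V_{1}V_{1}}$, commutators, and Christoffel symbols. The strategy is to absorb $B$ into the sum $G+L(\varphi+\psi)+\tau L\eta$, so that the maximum-principle inequality $LQ\leq 0$ reduces to the coercive bound $\varepsilon\mathcal{F}\lambda_{1}\leq C\mathcal{F}$, whence $\lambda_{1}(x_{0})\leq C$.

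The crucial step is controlling $B$ by $G$ and the auxiliary terms. Fix $\delta\in(0,1)$ small and partition the indices into $I:=\{i:F^{i\ov{i}}\leq\delta F^{1\ov{1}}\}$ and $J:=\{i:F^{i\ov{i}}>\delta F^{1\ov{1}}\}$. For $i\in I$ the standard Cauchy--Schwarz against the gradient contribution produced by $L\varphi(|\partial u|_{g}^{2})$ disposes of the corresponding summand, exactly as in \cite{HMW10,Szekelyhidi18}. For $i\in J$, the denominators $\chi_{i\ov{i}}-\chi_{q\ov{q}}$ in $G$ are small only when $\chi_{1\ov{1}}-\chi_{i\ov{i}}$ is also small, and in the complementary regime one rewrites $u_{V_{1}V_{1}i}$ in terms of the complex Hessian's third derivatives $\chi_{i\ov{q}V_{1}}$ so that $F^{i\ov{i}}|u_{V_{1}V_{1}i}|^{2}/\lambda_{1}^{2}$ becomes a multiple of a summand of $G$ (using that $F^{q\ov{q}}-F^{i\ov{i}}$ is then comparable to $F^{1\ov{1}}$). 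The sharper feature of our argument, relative to \cite{HMW10,Szekelyhidi18,CTW19}, is to carry out this splitting summand-by-summand, which leaves only the resistant pairs for which both $\chi_{1\ov{1}}-\chi_{i\ov{i}}$ and $F^{1\ov{1}}-F^{i\ov{i}}$ are very small.

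For the resistant residual of $B$, direct inspection shows that the offending term acquires an extra factor of $\lambda_{1}^{-2}$ compared with the remaining summands of $B$, so it is exactly of the size that the auxiliary contribution $\tau L\eta$ can absorb. I would choose $\eta$ to be a concave function of the square of $|\nabla^{2}u|_{g}$ (or the sum of the top several eigenvalues of $(\nabla^{2}u)^{2}$), so that $\tau L\eta$ reproduces a weighted sum $\sum_{k}F^{i\ov{i}}|u_{V_{k}V_{k}i}|^{2}/\lambda_{1}^{2}$ with the correct sign to cancel the resistant contribution, at the price of only $O(\mathcal{F})$ lower-order error. Combining all bounds and using $\mathcal{F}\geq\tau_{0}$ yields $\lambda_{1}(x_{0})\leq C$, which together with the already-available bound on $\sup_{X}|\partial\ov{\partial}u|_{g}$ (the trace of $\nabla^{2}u$ being controlled by $2\Delta_{g}u$) gives $\sup_{X}|\nabla^{2}u|_{g}\leq C$. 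The main obstacle, and the principal technical novelty compared to \cite{Chu18,CTW19}, is the construction and fine-tuning of $\eta$: in \cite{Chu18} the analogous gadget depends on $\partial\ov{\partial}u$ and draws on special features of the Monge--Amp\`ere operator, whereas here $\eta$ must depend on $\nabla^{2}u$ and must be calibrated so that $\tau L\eta$ precisely annihilates the resistant piece of $B$ without generating new uncontrollable third-order quantities.
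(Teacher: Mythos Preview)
Your overall architecture is right: a perturbed $\log\lambda_1$ quantity, an auxiliary term depending on $\nabla^2 u$ (the paper uses $\xi(|\rho|_g^2)$ with $\rho=\nabla^2u+Lg$), a gradient term, and a term in $u$, applied with the $\mathcal{C}$-subsolution alternative. But several of the key mechanisms are either misstated or missing.

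\textbf{The index split is vacuous as written.} With the ordering $\chi_{1\ov 1}\geq\cdots\geq\chi_{n\ov n}$ one has $F^{1\ov 1}\leq\cdots\leq F^{n\ov n}$, so $F^{1\ov 1}$ is the \emph{smallest} diagonal entry and your set $I=\{i:F^{i\ov i}\leq\delta F^{1\ov 1}\}$ is empty for $\delta<1$. The paper does not split by comparison with a single extremal $F^{i\ov i}$; it uses a \emph{cascading} comparison between consecutive indices, setting $S=\{i:F^{i\ov i}<A^{-2}e^{2Au(x_0)}F^{i+1\,\ov{i+1}}\}$, $i_0=\max S$, and $I=\{i>i_0\}$. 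This construction is what guarantees both (a) $F^{i\ov i}\geq A^{-2n}e^{2nAu}F^{n\ov n}$ for $i\in I$, and crucially (b) for $i\in I$, $q\notin I$ one has $F^{q\ov q}\leq A^{-2}e^{2Au}F^{i\ov i}$, producing the gap $F^{i\ov i}-F^{q\ov q}\geq\tfrac12 F^{i\ov i}$ needed to feed those cross terms into $G_2$. A single-threshold split against $F^{n\ov n}$ does not give (b).

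\textbf{The ``extra $\lambda_1^{-2}$'' is not direct inspection.} This is the heart of the argument and you have glossed over it. After rewriting
\[
u_{V_1V_1i}=-\sqrt{-1}\sum_{\alpha>1}\mu_\alpha u_{V_1V_\alpha i}+\sqrt{2}\sum_{q}\ov{\nu_q}\,\chi_{i\ov qV_1}+\sqrt{2}\sum_{q\in I}\ov{\nu_q}\,u_{i\ov qV_1}+O(\lambda_1),
\]
where $W_1=\tfrac{1}{\sqrt2}(V_1-\sqrt{-1}JV_1)=\sum_q\nu_q e_q$, the ``resistant'' piece is the $q\in I$ sum. The paper's Lemma~\ref{bad 3 lem} shows $|\nu_q|\leq C_A/\lambda_1$ for $q\in I$, and this is what supplies the extra decay. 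Its proof is not an inspection: one first feeds the \emph{crude} Cauchy--Schwarz bound for all of $B$ back into the master inequality~\eqref{lem 2 eqn 1}, uses $\xi''=3(\xi')^2$, $\eta''=3(\eta')^2$ to kill those terms exactly, and concludes $\sum_{i\in I}\sum_p(|u_{ip}|^2+|u_{i\ov p}|^2)\leq C_A$ from $F^{i\ov i}\geq A^{-2n}e^{2nAu}F^{n\ov n}$ for $i\in I$. Only then does the eigenvector equation $V_1^\alpha=\lambda_1^{-1}\sum_\beta\Phi^\alpha_\beta V_1^\beta$ force the real components $V_1^\alpha$ (hence $\nu_q$) to be $O(\lambda_1^{-1})$ for the relevant indices. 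Your proposal does not contain this two-pass structure.

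\textbf{Two further points.} First, you omit the good term $G_1=2\sum_{\alpha>1}\frac{F^{i\ov i}|u_{V_1V_\alpha i}|^2}{\lambda_1(\lambda_1-\lambda_\alpha)}$ coming from the eigenvalue Hessian; this is what absorbs the $\mu_\alpha$-sum above via a Cauchy--Schwarz using $\sum_{\alpha>1}(\lambda_1-\lambda_\alpha)\mu_\alpha^2\leq 2(1+\varepsilon)\lambda_1$ (which in turn uses the assumed bound on $|\partial\ov\partial u|_g$). Second, the paper uses $e^{-Au}$ rather than $Au$; the resulting positive term $A^2e^{-Au}F^{i\ov i}|u_i|^2$ is what absorbs the $6\varepsilon A^2e^{-2Au}F^{i\ov i}|u_i|^2$ produced by the first-order condition, with the specific choice $\varepsilon=e^{Au(x_0)}/6$. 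With a linear $\psi$ this balance fails and one cannot close the inequality by choosing $A$ large. Finally, the endgame does not produce a coercive $\varepsilon\mathcal{F}\lambda_1\leq C\mathcal{F}$; rather, in Case~1 of Proposition~\ref{subsolution prop} one reaches a contradiction, and in Case~2 one gets $\sum_{i,p}(|u_{ip}|^2+|u_{i\ov p}|^2)\leq C$ directly.
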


First, we assume without loss of generality that $\underline{u}=0$. Otherwise, we replace $\chi_0$ by $\underline{\chi}$. Let $\lambda_1(\nabla^{2}u)\geq\lambda_2(\nabla^{2}u)\geq\cdots\geq\lambda_{2n}(\nabla^{2}u)$ be the eigenvalues of real Hessian $\nabla^{2}u$ with respect to the Riemannian metric $g$. Under the assumptions of $\Gamma$, we see that
\[
\Gamma \subset \Gamma_1 = \{(v_1,\cdots,v_n)\in\mathbb{R}^{n}~|~\sum_iv_i > 0 \}.
\]
As such, we have $g^{i\ov{j}}\chi_{i\ov{j}}>0$, which implies (cf. \eqref{Theta})
\[
\sum_{\alpha=1}^{2n}\lambda_{\alpha}(\nabla^{2}u) = \Delta u > -C.
\]
This shows
\[
|\nabla^{2}u|_g = \left(\sum_{\alpha=1}^{2n}\left(\lambda_{\alpha}(\nabla^{2}u)\right)^{2}\right)^{\frac{1}{2}} \leq C\lambda_1(\nabla^{2}u)+C.
\]
Hence, it suffices to show that $\lambda_1(\nabla^{2}u)$ is uniformly bounded from above.

We define
\[
L := \sup_{X}|\nabla^{2}u|_g+1, \quad \rho := \nabla^{2}u+Lg.
\]
Clearly, $\rho$ is positive everywhere and $|\rho|_{g}^{2}\leq4L^{2}$. We consider the following quantity
\[
Q = \log\lambda_1(\nabla^{2}u)+\xi(|\rho|_g^2)+\eta(|\de u|_g^2)+e^{-Au},
\]
where
\[
\xi(s) = -\frac{1}{3}\log\left(5L^{2}-s\right), \quad
\eta(s) = -\frac{1}{3}\log\left(1+\sup_{X}|\de u|_g^2-s\right)
\]
and $A\gg1$ is a large constant to be determined later. It is very easy to check:
\[
\xi'' = 3(\xi')^2, \quad
\eta'' = 3(\eta')^2.
\]

We will show that $Q$ is uniformly bounded above using the maximum principle. We first show that this is sufficient to conclude that $\lambda_1(\nabla^2 u)$ is also uniformly bounded. Suppose then that $Q\leq C$,  and let $y\in X$ be a point at which $\lambda_1(\nabla^2 u)$ attains its supremum. It follows from the above that:
\[
\lambda_1(\nabla^2 u)(y) = \sup_X \lambda_1(\nabla^2 u) \geq \frac{1}{C_1}L - C_2,
\]
for uniform constants $C_1, C_2$. Then, since $|u|$ and $|\de u|^2_g$ are uniformly bounded, at $y$, we have:
\[
C \geq \log\lambda_1 - \frac{1}{3}\log(5L^2 - |\rho|^2_g) \geq \log\left(L - C_1 C_2\right) - \frac{1}{3}\log(L^2) - \log C,
\]
as $|\rho|^2_g \geq 0$. This shows
\[
C \geq \frac{1}{3}\log L - \log C
\]
so that $L \leq C$. By definition, $\sup_X \lambda_1(\nabla^2 u) \leq L$, so the theorem will follow.

\bigskip

We thus bound $Q$ from above. Let $x_0$ be a maximum point of $Q$. We choose real coordinates $\{x^\alpha\}_{\alpha=1}^{2n}$ near $x_0$ such that at $x_{0}$,
\[
g_{\alpha\beta} := g\left(\frac{\de}{\de x^{\alpha}},\frac{\de}{\de x^{\beta}}\right) = \delta_{\alpha\beta}, \quad
\frac{\de g_{\alpha\beta}}{\de x^{\gamma}} = 0, \quad \text{for any $\alpha,\beta,\gamma=1,2,\ldots,2n$}.
\]
and
\[
J\left(\frac{\de}{\de x^{2i-1}}\right) = \frac{\de}{\de x^{2i}}, \quad \text{for any $i=1,2,\ldots,n$},
\]
where $J$ denotes the complex structure of $(X,\omega)$. Define at $x_{0}$
\[
e_{i} = \frac{1}{\sqrt{2}}\left(\frac{\de}{\de x^{2i-1}}-\sqrt{-1}\frac{\de}{\de x^{2i}}\right), \quad \text{for any $i=1,2,\ldots,n$}.
\]
Thus $g_{i\ov{j}}:=g(e_{i}, \ov{e}_{j})=\delta_{ij}$ and $\{e_{i}\}_{i=1}^{n}$ forms a frame of $(1,0)$ vectors at $x_0$. After rotating coordinates, we further assume that
\[
\chi_{i\ov{j}} = \chi_{i\ov{i}}\delta_{ij}, \quad
\chi_{1\ov{1}} \geq \chi_{2\ov{2}}\geq \cdots \geq \chi_{n\ov{n}}, \quad \text{at $x_0$}.
\]
We then extend $\{e_{i}\}_{i=1}^{n}$ smoothly to a $g$-unitary frame of $(1,0)$ vectors near $x_0$.

At $x_0$, let $V_{\alpha}$ be the $g$-unit eigenvector of $\lambda_{\alpha}$ and denote its components by $\{V_{\alpha}^{\beta}\}_{\beta=1}^{2n}$. Extend $V_1, V_2,\ldots,V_{2n}$ to be vector fields in a neighborhood of $x_0$ by taking the components to be constant.

When $\lambda_1(\nabla^{2} u)(x_0)=\lambda_2(\nabla^{2} u)(x_0)$, the quantity $Q$ might not be smooth at $x_0$. To avoid this situation, we apply a standard perturbation argument (see e.g., \cite{Szekelyhidi18,CTW19}). Define
\[
B = B_{\alpha\beta}dx^{\alpha}\otimes dx^{\beta} := (\delta_{\alpha \beta}-V_{1}^{\alpha}V_{1}^{\beta})dx^{\alpha}\otimes dx^{\beta}
\]
and
\[
\Phi = \Phi_{\beta}^{\alpha}\frac{\de}{\de x^{\alpha}}\otimes dx^{\beta} := g^{\alpha\gamma}(\nabla_{\gamma}\nabla_{\beta}u-B_{\gamma\beta})\frac{\de}{\de x^{\alpha}}\otimes dx^{\beta},
\]
where
\[
\nabla_{\gamma}\nabla_{\beta}u =
\nabla^{2}u\left(\frac{\de}{\de x^{\gamma}},\frac{\de}{\de x^{\beta}}\right).
\]
Write $\lambda_{\alpha}(\Phi)$ for the eigenvalues of $\Phi$ with respect to $g$. It is clear that $\lambda_{1}(\Phi)(x_0)>\lambda_{2}(\Phi)(x_0)$, which implies $\lambda_{1}(\Phi)$ is smooth at $x_{0}$. So we consider the modified quantity near $x_{0}$:
\[
\hat{Q} = \log\lambda_1(\Phi)+\xi(|\rho|_g^2)+\eta(|\de u|_g^2)+e^{-Au}.
\]
Since $\lambda_{1}(\Phi)(x_{0})=\lambda_{1}(\nabla^{2}u)(x_{0})$ and $\lambda_{1}(\nabla^{2}u)\geq \lambda_{1}(\Phi)$ near $x_{0}$, we see that $\hat{Q}$ still achieves its maximum at $x_{0}$. Note that the vector fields $V_1,V_2\ldots,V_{2n}$ are still eigenvectors for $\Phi$ at $x_0$, with eigenvalues $\lambda_1(\Phi)>\lambda_2(\Phi)\geq\ldots\geq\lambda_{2n}(\Phi)$. In the following argument, we use $\lambda_{\alpha}$ to denote $\lambda_{\alpha}(\Phi)$ for convenience.

Let $F^{i\ov{j}}$ and $F^{i\ov{j},k\ov{l}}$ be the first and second derivatives of $F$. Then at $x_0$, we have (see e.g. \cite{Andrews94,Gerhardt96,Spruck05})
\[
F^{i\ov{j}} = \delta_{ij}f_i
\]
and
\begin{equation}\label{concave}
F^{i\ov{j},p\ov{q}} = f_{ip}\delta_{ij}\delta_{pq}+\frac{f_i-f_j}{\chi_{i\ov{i}}-\chi_{j\ov{j}}}(1-\delta_{ij})\delta_{iq}\delta_{jp}.
\end{equation}
Note that the second term is interpreted as a limit if $\chi_{i\ov{i}}=\chi_{j\ov{j}}$. Recalling that $\chi_{1\ov{1}} \geq \chi_{2\ov{2}}\geq \cdots \geq \chi_{n\ov{n}}$, we have (see e.g. \cite{EH89,Spruck05})
\[
F^{1\ov{1}} \leq F^{2\ov{2}} \leq \cdots \leq F^{n\ov{n}}.
\]

We always use subscripts to denote covariant differentiation with respect to the Levi-Civita connection. Since $(X,\omega)$ may be not K\"ahler, the Levi-Civita connection will in general not be compatible with the complex structure. Thus, there exists a tensor field $\Theta$ depending only on $(X,\omega)$ such that (cf. \cite[Section 2]{Chu18})
\begin{equation}\label{Theta}
(\de\dbar u)(e_i,\ov{e}_{j}) = u_{i\ov{j}}+\Theta_{i\ov{j}}^ku_k+\Theta_{i\ov{j}}^{\ov{k}}u_{\ov{k}}
\end{equation}
with
\[
\ov{\Theta_{i\ov{j}}^k} = \Theta_{j\ov{i}}^{\ov{k}} \text{ and } \ov{\Theta_{i\ov{j}}^{\ov{k}}} = \Theta_{j\ov{i}}^{k}.
\]
We emphasize that $u_{i\ov{j}}$ represents $(\nabla^2u)(e_i,\ov{e}_{j})$, not $(\de\dbar u)(e_i,\ov{e}_{j})$. In particular,
\begin{equation}\label{Theta 1}
(\de\dbar u)(e_i,\ov{e}_{i}) = u_{i\ov{i}}+2\mathrm{Re}\big(\Theta_{i\ov{i}}^ku_k\big).
\end{equation}
The following commutation formula for covariant derivatives will be used many times:
\[
(\nabla^{p}u)_{\alpha\beta}-(\nabla^{p}u)_{\beta\alpha}
= (\nabla^{p}u)*\mathrm{Rm},
\]
where $*$ means contraction and $\mathrm{Rm}$ denotes the curvature tensor of $(X,g)$.

\subsection{Some calculations}
By the maximum principle, at $x_{0}$, we have
\begin{equation}\label{Q ii}
\begin{split}
0 \geq {} & F^{i\ov{i}}\hat{Q}_{i\ov{i}} \\
= {} & \frac{F^{i\ov{i}}(\lambda_{1})_{i\ov{i}}}{\lambda_{1}}-\frac{F^{i\ov{i}}|(\lambda_1)_i|^{2}}{\lambda_{1}^{2}}
+\xi'F^{i\ov{i}}(|\rho|_g^2)_{i\ov{i}}+\xi''F^{i\ov{i}}|(|\rho|_g^2)_i|^{2} \\
& +\eta'F^{i\ov{i}}(|\de u|_g^2)_{i\ov{i}}+\eta''F^{i\ov{i}}|(|\de u|_g^2)_i|^{2} \\[2mm]
& +A^2e^{-Au}F^{i\ov{i}}|u_i|^2-Ae^{-Au}F^{i\ov{i}}u_{i\ov{i}}.
\end{split}
\end{equation}
The task of this subsection is to obtain a lower bound of $F^{i\ov{i}}\hat{Q}_{i\ov{i}}$.
\begin{lemma}\label{lem 1}
At $x_0$, we have
\begin{equation}\label{lem 1 eqn 1}
F^{i\ov{i}}u_{i\ov{i}} = -F^{i\ov{i}}(\underline{\chi}_{i\ov{i}}-\chi_{i\ov{i}})
-2F^{i\ov{i}}\mathrm{Re}\big(\Theta_{i\ov{i}}^ku_k\big),
\end{equation}

\begin{equation}\label{lem 1 eqn 2}
\begin{split}
F^{i\ov{i}}(|\de u|_g^2)_{i\ov{i}} \geq {} & \sum_{p}F^{i\ov{i}}(|u_{ip}|^2+|u_{i\ov{p}}|^2)-C\mathcal{F} \\
& -2\sum_{p}F^{i\ov{i}}\mathrm{Re}\left(\Theta_{i\ov{i}}^ku_{k\ov{p}}u_p+\ov{\Theta_{i\ov{i}}^k}u_{\ov{k}\ov{p}}u_p\right),
\end{split}
\end{equation}

\begin{equation}\label{lem 1 eqn 3}
\begin{split}
F^{i\ov{i}}(\lambda_1)_{i\ov{i}} \geq {} &
2\sum_{\alpha>1}\frac{F^{i\ov{i}}|u_{V_{1}V_{\alpha}i}|^2}{\lambda_1-\lambda_\alpha}
-F^{i\ov{j},p\ov{q}}\chi_{i\ov{j}V_{1}}\chi_{p\ov{q}V_{1}} \\
&-2F^{i\ov{i}}\mathrm{Re}\big(\Theta_{i\ov{i}}^ku_{V_1V_1k}\big)-C\lambda_1\mathcal{F},
\end{split}
\end{equation}

\begin{equation}\label{lem 1 eqn 4}
F^{i\ov{i}}(|\rho|_{g}^{2})_{i\ov{i}}
\geq 2\sum_{\alpha,\beta}F^{i\ov{i}}|u_{i\alpha\beta}|^{2}
-4\sum_{\alpha,\beta}F^{i\ov{i}}\rho_{\alpha\beta}\mathrm{Re}(\Theta_{i\ov{i}}^ku_{\alpha\beta k})-CL^2\mathcal{F}.
\end{equation}
\end{lemma}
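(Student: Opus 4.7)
The plan is to treat each of the four items as a local calculation at $x_{0}$ using three ingredients: (i) the relation $\chi=\underline{\chi}+i\p\pbar u$ combined with \eqref{Theta 1}, which converts the real Hessian entry $u_{i\ov i}$ into $(\p\pbar u)(e_i,\ov e_i)-2\mathrm{Re}(\Theta_{i\ov i}^k u_k)$; (ii) the commutation formula $(\nabla^{p}u)_{\alpha\beta}-(\nabla^{p}u)_{\beta\alpha}=(\nabla^{p}u)*\mathrm{Rm}$, which produces curvature-type errors; and (iii) the once- and twice-differentiated versions of the equation $F(A)=h$. Since the constant $C$ may depend on $\sup_{X}(|u|+|\p u|_{g}+|\p\pbar u|_{g})$, errors of lower order in $u$ absorb into $C\mathcal{F}$, errors linear in $\lambda_{1}$ into $C\lambda_{1}\mathcal{F}$, and errors quadratic in $L$ into $CL^{2}\mathcal{F}$. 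Throughout I work in the unitary $(1,0)$-frame $\{e_{i}\}$ and its associated real orthonormal frame, in which the Christoffel symbols vanish at $x_{0}$.

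Identity \eqref{lem 1 eqn 1} is immediate: rearrange \eqref{Theta 1} to $u_{i\ov i}=\chi_{i\ov i}-\underline{\chi}_{i\ov i}-2\mathrm{Re}(\Theta_{i\ov i}^{k}u_{k})$ and contract with $F^{i\ov i}$. For \eqref{lem 1 eqn 2} I expand $|\de u|_{g}^{2}=\sum_{p}u_{p}u_{\ov p}$, differentiate twice, and collect the non-negative square $\sum_{p}F^{i\ov i}(|u_{ip}|^{2}+|u_{i\ov p}|^{2})$; the cross terms $F^{i\ov i}u_{pi\ov i}u_{\ov p}$ and its conjugate are handled by commuting derivatives, applying \eqref{Theta 1}, and using the first-order differentiated equation $F^{i\ov i}\chi_{i\ov i p}=h_{p}$. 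The explicit $\Theta$-pieces in \eqref{lem 1 eqn 2} come directly from \eqref{Theta 1}, while the curvature commutators together with $\p h$ and $\nabla\underline{\chi}$ assemble into the $-C\mathcal{F}$ error.

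The main step is \eqref{lem 1 eqn 3}. I begin with the classical perturbation formula for the simple top eigenvalue of $\Phi$:
\[
(\lambda_{1})_{i\ov i} = \Phi(V_{1},V_{1})_{i\ov i}+2\sum_{\alpha>1}\frac{|\Phi(V_{1},V_{\alpha})_{i}|^{2}}{\lambda_{1}-\lambda_{\alpha}}.
\]
At $x_{0}$ one checks $B(V_{1},V_{1})=B(V_{1},V_{\alpha})=0$ for $\alpha>1$, so $\Phi(V_{1},V_{\bullet})=u_{V_{1}V_{\bullet}}$ there, and differentiating the $B$-correction contributes only a $-C\lambda_{1}\mathcal{F}$ error from curvature. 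To estimate the main term $F^{i\ov i}u_{V_{1}V_{1}i\ov i}$, I commute the fourth-order derivative to $F^{i\ov i}u_{i\ov iV_{1}V_{1}}$ -- which picks up errors of size $\lambda_{1}\cdot|\mathrm{Rm}|$ -- then apply \eqref{Theta 1} and use the twice-differentiated equation
\[
F^{i\ov j,p\ov q}\chi_{i\ov jV_{1}}\chi_{p\ov qV_{1}}+F^{i\ov i}\chi_{i\ov iV_{1}V_{1}} = h_{V_{1}V_{1}}+(\mathrm{torsion/curvature})*\nabla\chi.
\]
The hypothesized lower bound on $\nabla^{2}h$ controls $h_{V_{1}V_{1}}$ from below; combining with the commutator and torsion errors yields \eqref{lem 1 eqn 3}, with the $\Theta_{i\ov i}^{k}u_{V_{1}V_{1}k}$ piece surfacing from \eqref{Theta 1}.

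Finally, for \eqref{lem 1 eqn 4} I expand $|\rho|_{g}^{2}=\sum_{\alpha,\beta}\rho_{\alpha\beta}^{2}$ in the real orthonormal frame and use $\rho=\nabla^{2}u+Lg$ together with the fact that $Lg$ is covariantly constant to get $\rho_{\alpha\beta,i}=u_{\alpha\beta i}$; the squared first derivatives produce the good term $2\sum_{\alpha,\beta}F^{i\ov i}|u_{i\alpha\beta}|^{2}$, and the cross product $\sum_{\alpha,\beta}F^{i\ov i}\rho_{\alpha\beta}(\rho_{\alpha\beta})_{i\ov i}$ is treated exactly as in \eqref{lem 1 eqn 2}, with the bound $|\rho_{\alpha\beta}|\leq 2L$ and curvature commutators yielding the final $-CL^{2}\mathcal{F}$. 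The principal obstacle is \eqref{lem 1 eqn 3}: while the eigenvalue formula itself is standard, carefully tracking the fourth-order commutators and the derivatives of the torsion tensor $\Theta$ in the non-K\"ahler setting is delicate, and it is essential that every non-positive error grow at most linearly in $\lambda_{1}$, since a stronger $\lambda_{1}$-dependence would obstruct the subsequent maximum principle argument controlling \eqref{bad terms}.
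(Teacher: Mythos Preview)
Your treatment of \eqref{lem 1 eqn 1}--\eqref{lem 1 eqn 3} is correct and follows the paper's approach closely. The gap is in \eqref{lem 1 eqn 4}. You say the cross term $\sum_{\alpha,\beta}F^{i\ov i}\rho_{\alpha\beta}(\rho_{\alpha\beta})_{i\ov i}$ is ``treated exactly as in \eqref{lem 1 eqn 2}'', but the situations differ in a crucial way: the cross term in \eqref{lem 1 eqn 2} involves $u_{pi\ov i}$, which after commutation only requires the \emph{once}-differentiated equation $F^{i\ov i}\chi_{i\ov i \ov p}=h_{\ov p}$. In \eqref{lem 1 eqn 4} the cross term involves the fourth-order quantity $u_{\alpha\beta i\ov i}$; after commuting to $u_{i\ov i\alpha\beta}$ and passing to $\chi$, you must differentiate the equation \emph{twice}, producing
\[
F^{i\ov i}\chi_{i\ov i\alpha\beta}+F^{i\ov j,p\ov q}\chi_{i\ov j\alpha}\chi_{p\ov q\beta}=h_{\alpha\beta}.
\]
This leaves you with the third-order term
\[
-2\sum_{\alpha,\beta}F^{i\ov j,p\ov q}\rho_{\alpha\beta}\chi_{i\ov j\alpha}\chi_{p\ov q\beta},
\]
which is quadratic in $\nabla\chi$ and cannot be absorbed into $CL^{2}\mathcal{F}$ using only $|\rho_{\alpha\beta}|\le 2L$ and curvature commutators. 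The paper handles this by observing that $\rho>0$ as a symmetric tensor (this is precisely why $\rho=\nabla^{2}u+Lg$ rather than $\nabla^{2}u$ itself), so diagonalizing $\rho$ and using the concavity of $F$ gives
\[
-2\sum_{\alpha,\beta}F^{i\ov j,p\ov q}\rho_{\alpha\beta}\chi_{i\ov j\alpha}\chi_{p\ov q\beta}\ge 0.
\]
Without this step the inequality \eqref{lem 1 eqn 4} does not follow. A secondary point in the same spirit: using $\rho>0$ (rather than just $|\rho|\le 2L$) on the term $2\sum_{\alpha,\beta}\rho_{\alpha\beta}h_{\alpha\beta}$ is what allows the constant $C$ to depend only on a \emph{lower} bound for $\nabla^{2}h$, as stated in the theorem.
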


\begin{proof}
Using $\chi=\chi_0+\ddbar u$ and \eqref{Theta 1}, we have
\[
\chi_{i\ov{i}} = (\chi_0)_{i\ov{i}}+u_{i\ov{i}}+2\mathrm{Re}\big(\Theta_{i\ov{i}}^ku_k\big).
\]
For \eqref{lem 1 eqn 1}, we compute
\[
F^{i\ov{i}}u_{i\ov{i}} = F^{i\ov{i}}\left(\chi_{i\ov{i}}-(\chi_0)_{i\ov{i}}-2\mathrm{Re}\big(\Theta_{i\ov{i}}^ku_k\big)\right).
\]
Recalling that $\underline{u}=0$, so $\underline{\chi}=\chi_0$. It then follows that
\[
F^{i\ov{i}}u_{i\ov{i}} = -F^{i\ov{i}}(\underline{\chi}_{i\ov{i}}-\chi_{i\ov{i}})
-2F^{i\ov{i}}\mathrm{Re}\big(\Theta_{i\ov{i}}^ku_k\big).
\]

For \eqref{lem 1 eqn 2}, we compute
\[
\begin{split}
F^{i\ov{i}}(|\de u|_g^2)_{i\ov{i}}
= {} & \sum_{p}F^{i\ov{i}}(|u_{ip}|^2+|u_{i\ov{p}}|^2)+\sum_{p}F^{i\ov{i}}\left(u_{p}u_{\ov{p}i\ov{i}}+u_{pi\ov{i}}u_{\ov{p}}\right) \\
\geq {} & \sum_{p}F^{i\ov{i}}(|u_{ip}|^2+|u_{i\ov{p}}|^2)+2\mathrm{Re}\left(F^{i\ov{i}}u_{p}u_{\ov{p}i\ov{i}}\right)-C\mathcal{F} \\
= {} & \sum_{p}F^{i\ov{i}}(|u_{ip}|^2+|u_{i\ov{p}}|^2)+2\mathrm{Re}\left(F^{i\ov{i}}u_{p}\left(u_{i\ov{i}\ov{p}}+O(1)\right)\right)-C\mathcal{F} \\
\geq {} & \sum_{p}F^{i\ov{i}}(|u_{ip}|^2+|u_{i\ov{p}}|^2)+2\mathrm{Re}\left(F^{i\ov{i}}u_{p}u_{i\ov{i}\ov{p}}\right)-C\mathcal{F}.
\end{split}
\]
Applying $\nabla_{\ov{p}}$ to \eqref{main eqn},
\[
F^{i\ov{i}}\chi_{i\ov{i}\ov{p}} = h_{\ov{p}},
\]
so
\[
F^{i\ov{i}}u_{i\ov{i}\ov{p}} = -F^{i\ov{i}}(\Theta_{i\ov{i}}^ku_{k\ov{p}}+\ov{\Theta_{i\ov{i}}^k}u_{\ov{k}\ov{p}})+h_{\ov{p}}+O(\mathcal{F}).
\]
It then follows that
\[
2\mathrm{Re}\left(F^{i\ov{i}}u_{p}u_{i\ov{i}\ov{p}}\right)
\geq -2\sum_{p}F^{i\ov{i}}\mathrm{Re}\left(\Theta_{i\ov{i}}^ku_{k\ov{p}}u_p+\ov{\Theta_{i\ov{i}}^k}u_{\ov{k}\ov{p}}u_p\right)-C\mathcal{F}.
\]
Thus,
\[
\begin{split}
F^{i\ov{i}}(|\de u|_g^2)_{i\ov{i}} \geq {} & \sum_{p}F^{i\ov{i}}(|u_{ip}|^2+|u_{i\ov{p}}|^2)-C\mathcal{F} \\
& -2\sum_{p}F^{i\ov{i}}\mathrm{Re}\left(\Theta_{i\ov{i}}^ku_{k\ov{p}}u_p+\ov{\Theta_{i\ov{i}}^k}u_{\ov{k}\ov{p}}u_p\right).
\end{split}
\]

For \eqref{lem 1 eqn 3}, we recall the following formulas for differentiating the largest eigenvalue of a matrix (see e.g. \cite[Lemma 5.2]{CTW19}):
\begin{equation}\label{formulas}
\begin{split}
\lambda_{1}^{\alpha\beta} {} & := \frac{\de\lambda_{1}}{\de\Phi_{\beta}^{\alpha}}=V_{1}^{\alpha}V_{1}^{\beta}, \\
\lambda_{1}^{\alpha\beta,\gamma\delta} {} & := \frac{\de^{2}\lambda_{1}}{\de\Phi_{\beta}^{\alpha}\de\Phi_{\delta}^{\gamma}}
= \sum_{\mu>1}\frac{V_{1}^{\alpha}V_{\mu}^{\beta}V_{\mu}^{\gamma}V_{1}^{\delta}+V_{\mu}^{\alpha}V_{1}^{\beta}V_{1}^{\gamma}V_{\mu}^{\delta}}
  {\lambda_{1}-\lambda_{\mu}}.
\end{split}
\end{equation}
Since $\Phi_{\beta}^{\alpha}=g^{\alpha\gamma}(u_{\gamma\beta}-B_{\gamma\beta})$ and $B_{\alpha\beta i}=0$ at $x_{0}$, then
\[
(\Phi_{\beta}^{\alpha})_{i} = u_{\alpha\beta i}-B_{\alpha\beta i} = u_{\alpha\beta i}.
\]
By \eqref{formulas}, we compute
\[
\begin{split}
F^{i\ov{i}}(\lambda_1)_{i\ov{i}}
= {} & F^{i\ov{i}}\lambda_{1}^{\alpha\beta,\gamma\delta}(\Phi_{\beta}^{\alpha})_i(\Phi_{\delta}^{\gamma})_{\ov{i}}+
F^{i\ov{i}}\lambda_{1}^{\alpha\beta}(\Phi_{\beta}^{\alpha})_{i\ov{i}} \\[1mm]
= {} & 2\sum_{\alpha>1}\frac{F^{i\ov{i}}|u_{V_{1}V_{\alpha}i}|^2}{\lambda_1-\lambda_\alpha}+F^{i\ov{i}}u_{V_{1}V_{1}i\ov{i}}+O(\mathcal{F}) \\
= {} & 2\sum_{\alpha>1}\frac{F^{i\ov{i}}|u_{V_{1}V_{\alpha}i}|^2}{\lambda_1-\lambda_\alpha}
+F^{i\ov{i}}\left(u_{i\ov{i}V_{1}V_{1}}+O(\lambda_1)\right)+O(\mathcal{F}) \\
\geq {} & 2\sum_{\alpha>1}\frac{F^{i\ov{i}}|u_{V_{1}V_{\alpha}i}|^2}{\lambda_1-\lambda_\alpha}+F^{i\ov{i}}u_{i\ov{i}V_{1}V_{1}}-C\lambda_1\mathcal{F}.
\end{split}
\]
Applying $\nabla_{V_1}\nabla_{V_1}$ to \eqref{main eqn},
\[
F^{i\ov{i}}\chi_{i\ov{i}V_{1}V_{1}}+F^{i\ov{j},p\ov{q}}\chi_{i\ov{j}V_{1}}\chi_{p\ov{q}V_{1}} = h_{V_{1}V_{1}},
\]
so
\[
\begin{split}
F^{i\ov{i}}u_{i\ov{i}V_{1}V_{1}}
={} & -F^{i\ov{j},p\ov{q}}\chi_{i\ov{j}V_{1}}\chi_{p\ov{q}V_{1}}
-2F^{i\ov{i}}\mathrm{Re}\big(\Theta_{i\ov{i}}^ku_{kV_1V_1}\big)+h_{V_1V_1}+O(\lambda_1\mathcal{F}) \\
\geq {} & -F^{i\ov{j},p\ov{q}}\chi_{i\ov{j}V_{1}}\chi_{p\ov{q}V_{1}}
-2F^{i\ov{i}}\mathrm{Re}\big(\Theta_{i\ov{i}}^ku_{V_1V_1k}\big)-C\lambda_1\mathcal{F}.
\end{split}
\]
Thus,
\[
\begin{split}
F^{i\ov{i}}(\lambda_1)_{i\ov{i}} \geq {} &
2\sum_{\alpha>1}\frac{F^{i\ov{i}}|u_{V_{1}V_{\alpha}i}|^2}{\lambda_1-\lambda_\alpha}
-F^{i\ov{j},p\ov{q}}\chi_{i\ov{j}V_{1}}\chi_{p\ov{q}V_{1}} \\
&-2F^{i\ov{i}}\mathrm{Re}\big(\Theta_{i\ov{i}}^ku_{V_1V_1k}\big)-C\lambda_1\mathcal{F}.
\end{split}
\]

For \eqref{lem 1 eqn 4}, recalling that $\rho=\nabla^{2}u+Lg$, we have
\[
\rho_{\alpha\beta i} = u_{\alpha\beta i}, \quad
\rho_{\alpha\beta i\ov{i}} = u_{\alpha\beta i\ov{i}}.
\]
We compute
\[
\begin{split}
F^{i\ov{i}}(|\rho|_{g}^{2})_{i\ov{i}}
= {} & 2\sum_{\alpha,\beta}F^{i\ov{i}}|u_{\alpha\beta i}|^{2}+2\sum_{\alpha,\beta}F^{i\ov{i}}\rho_{\alpha\beta}u_{\alpha\beta i\ov{i}} \\
= {} & 2\sum_{\alpha,\beta}F^{i\ov{i}}\left(|u_{i\alpha\beta}|^{2}+O(1)\right)
+2\sum_{\alpha,\beta}F^{i\ov{i}}\rho_{\alpha\beta}\left(u_{i\ov{i}\alpha\beta}+O(\lambda_1)\right) \\
\geq {} & 2\sum_{\alpha,\beta}F^{i\ov{i}}|u_{i\alpha\beta}|^{2}+2\sum_{\alpha,\beta}F^{i\ov{i}}\rho_{\alpha\beta}u_{i\ov{i}\alpha\beta}-CL^{2}\mathcal{F}.
\end{split}
\]
Applying $\nabla_{\beta}\nabla_{\alpha}$ to \eqref{main eqn},
\[
F^{i\ov{i}}\chi_{i\ov{i}\alpha\beta}+F^{i\ov{j},p\ov{q}}\chi_{i\ov{j}\alpha}\chi_{p\ov{q}\beta} = h_{\alpha\beta},
\]
then
\[
\begin{split}
F^{i\ov{i}}u_{i\ov{i}\alpha\beta}
= {} & -F^{i\ov{j},p\ov{q}}\chi_{i\ov{j}\alpha}\chi_{p\ov{q}\beta}-2F^{i\ov{i}}\mathrm{Re}(\Theta_{i\ov{i}}^ku_{k\alpha\beta})
+h_{\alpha\beta}+O(\lambda_1\mathcal{F}) \\
\geq {} & -F^{i\ov{j},p\ov{q}}\chi_{i\ov{j}\alpha}\chi_{p\ov{q}\beta}-2F^{i\ov{i}}\mathrm{Re}(\Theta_{i\ov{i}}^ku_{\alpha\beta k})
+h_{\alpha\beta}-CL\mathcal{F}
\end{split}
\]
which implies
\[
\begin{split}
& 2\sum_{\alpha,\beta}F^{i\ov{i}}\rho_{\alpha\beta}u_{i\ov{i}\alpha\beta} \\
\geq {} & -2\sum_{\alpha,\beta}F^{i\ov{j},p\ov{q}}\rho_{\alpha\beta}\chi_{i\ov{j}\alpha}\chi_{p\ov{q}\beta}
-4\sum_{\alpha,\beta}F^{i\ov{i}}\rho_{\alpha\beta}\mathrm{Re}(\Theta_{i\ov{i}}^ku_{\alpha\beta k})+2\sum_{\alpha,\beta}\rho_{\alpha\beta}h_{\alpha\beta}
-CL^{2}\mathcal{F} \\
\geq {} & -2\sum_{\alpha,\beta}F^{i\ov{j},p\ov{q}}\rho_{\alpha\beta}\chi_{i\ov{j}\alpha}\chi_{p\ov{q}\beta}
-4\sum_{\alpha,\beta}F^{i\ov{i}}\rho_{\alpha\beta}\mathrm{Re}(\Theta_{i\ov{i}}^ku_{\alpha\beta k})-CL^{2}\mathcal{F}.
\end{split}
\]
Here we used $\rho>0$, so constant $C$ does not depend on the upper bound of $\nabla^{2}h$ . Using the concavity of $F$ and $\rho>0$,
\[
-2\sum_{\alpha,\beta}F^{i\ov{j},p\ov{q}}\rho_{\alpha\beta}\chi_{i\ov{j}\alpha}\chi_{p\ov{q}\beta} \geq 0.
\]
This shows
\[
2\sum_{\alpha,\beta}F^{i\ov{i}}\rho_{\alpha\beta}u_{i\ov{i}\alpha\beta}
\geq -4\sum_{\alpha,\beta}F^{i\ov{i}}\rho_{\alpha\beta}\mathrm{Re}(\Theta_{i\ov{i}}^ku_{\alpha\beta k})-CL^{2}\mathcal{F},
\]
and so
\[
F^{i\ov{i}}(|\rho|_{g}^{2})_{i\ov{i}}
\geq 2\sum_{\alpha,\beta}F^{i\ov{i}}|u_{i\alpha\beta}|^{2}
-4\sum_{\alpha,\beta}F^{i\ov{i}}\rho_{\alpha\beta}\mathrm{Re}(\Theta_{i\ov{i}}^ku_{\alpha\beta k})-CL^2\mathcal{F}.
\]
\end{proof}

\begin{lemma}\label{lem 2}
At $x_0$, we have
\begin{equation}\label{lem 2 eqn 1}
\begin{split}
0 \geq {} & 2\sum_{\alpha>1}\frac{F^{i\ov{i}}|u_{V_{1}V_{\alpha}i}|^2}{\lambda_{1}(\lambda_{1}-\lambda_{\alpha})}
+\sum_{i\neq q}\frac{(F^{q\ov{q}}-F^{i\ov{i}})|\chi_{i\ov{q}V_1}|^2}{\lambda_1(\chi_{i\ov{i}}-\chi_{q\ov{q}})}
+\sum_{\alpha,\beta}\frac{F^{i\ov{i}}|u_{i\alpha\beta}|^{2}}{C_{A}\lambda_{1}^{2}} \\
& -\frac{F^{i\ov{i}}|u_{V_{1}V_{1}i}|^2}{\lambda_1^2}+\frac{1}{C}\sum_{p}F^{i\ov{i}}\left(|u_{ip}|^2+|u_{i\ov{p}}|^2\right) \\
& +\xi''\sum_{\alpha,\beta}F^{i\ov{i}}|(|\rho|_g^2)_i|^2+\eta''F^{i\ov{i}}|(|\de u|_g^2)_i|^2 \\
& +A^2e^{-Au}F^{i\ov{i}}|u_i|^2+Ae^{-Au}\sum_{i}F^{i\ov{i}}(\underline{\chi}_{i\ov{i}}-\chi_{i\ov{i}})-C\mathcal{F},
\end{split}
\end{equation}
where $C_A$ denotes a uniform constant depending on $A$.
\end{lemma}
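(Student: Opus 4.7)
The plan is to substitute the four lower bounds of Lemma~\ref{lem 1} into the critical-point inequality \eqref{Q ii} and reorganize everything into the desired form. Three algebraic reductions handle the principal non-error pieces. First, from \eqref{formulas} together with $(\Phi_\beta^\alpha)_i=u_{\alpha\beta i}$ at $x_0$ (since $B_{\alpha\beta i}=0$ there), one has $(\lambda_1)_i=V_1^\alpha V_1^\beta u_{\alpha\beta i}=u_{V_1V_1i}$, so the term $-F^{i\ov{i}}|(\lambda_1)_i|^2/\lambda_1^2$ in \eqref{Q ii} is exactly $-F^{i\ov{i}}|u_{V_1V_1i}|^2/\lambda_1^2$. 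Second, using the explicit expression \eqref{concave} for $F^{i\ov{j},p\ov{q}}$, the term $-F^{i\ov{j},p\ov{q}}\chi_{i\ov{j}V_1}\chi_{p\ov{q}V_1}/\lambda_1$ coming from \eqref{lem 1 eqn 3} splits into a diagonal piece $-\sum_{i,p}f_{ip}\chi_{i\ov{i}V_1}\chi_{p\ov{p}V_1}/\lambda_1$, which is non-negative by concavity of $f$ and may be discarded, and an off-diagonal piece equal (after relabeling $(i,j)\to(q,i)$ and using $|\chi_{i\ov{q}V_1}|=|\chi_{q\ov{i}V_1}|$) to $\sum_{i\neq q}(F^{q\ov{q}}-F^{i\ov{i}})|\chi_{i\ov{q}V_1}|^2/(\lambda_1(\chi_{i\ov{i}}-\chi_{q\ov{q}}))$, matching the second term of \eqref{lem 2 eqn 1}. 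Third, feeding \eqref{lem 1 eqn 1} into $-Ae^{-Au}F^{i\ov{i}}u_{i\ov{i}}$ produces the claimed $Ae^{-Au}\sum_i F^{i\ov{i}}(\underline{\chi}_{i\ov{i}}-\chi_{i\ov{i}})$ together with a first-order error $2Ae^{-Au}F^{i\ov{i}}\mathrm{Re}(\Theta_{i\ov{i}}^k u_k)$, which is absorbed by Cauchy--Schwarz into a small fraction of the positive gradient term $A^2 e^{-Au}F^{i\ov{i}}|u_i|^2$ plus a $C\mathcal{F}$ remainder, provided $A$ is taken sufficiently large.

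The remaining work is to absorb the three torsion error terms produced by \eqref{lem 1 eqn 2}, \eqref{lem 1 eqn 3}, \eqref{lem 1 eqn 4}. The error $\eta' F^{i\ov{i}}\mathrm{Re}(\Theta_{i\ov{i}}^k u_{k\ov{p}}u_p)$ from \eqref{lem 1 eqn 2} is estimated by Cauchy--Schwarz against half of $\eta'\sum_p F^{i\ov{i}}|u_{i\ov{p}}|^2$, producing the target $(1/C)\sum_p F^{i\ov{i}}(|u_{ip}|^2+|u_{i\ov{p}}|^2)$ together with a $C\mathcal{F}$ remainder (since $|u|_g$ is bounded). The error $\xi' F^{i\ov{i}}\rho_{\alpha\beta}\mathrm{Re}(\Theta_{i\ov{i}}^k u_{\alpha\beta k})$ from \eqref{lem 1 eqn 4} is absorbed into a small fraction of $\xi'\sum_{\alpha,\beta}F^{i\ov{i}}|u_{i\alpha\beta}|^2$; the quadratic-in-$L$ loss from $|\rho_{\alpha\beta}|\leq CL$ is exactly compensated by the $\xi'\geq c/L^2$ prefactor, so only a further $C\mathcal{F}$ remainder survives, and what is retained is the positive third-order quantity appearing (after bookkeeping) as $\sum_{\alpha,\beta}F^{i\ov{i}}|u_{i\alpha\beta}|^2/(C_A\lambda_1^2)$.

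The main obstacle is the torsion error from \eqref{lem 1 eqn 3}, namely $-2F^{i\ov{i}}\mathrm{Re}(\Theta_{i\ov{i}}^k u_{V_1V_1k})/\lambda_1$: unlike the previous two errors, it cannot be absorbed by its naturally adjacent term $-F^{i\ov{i}}|u_{V_1V_1i}|^2/\lambda_1^2$, which already carries the wrong sign. The fix is to route it through the good quantity $\sum_{\alpha,\beta}F^{i\ov{i}}|u_{i\alpha\beta}|^2/\lambda_1^2$ instead: expanding $e_k$ in real coordinates expresses $|u_{V_1V_1k}|^2$ as a linear combination of $|u_{\alpha\beta\gamma}|^2$, so a Cauchy--Schwarz estimate yields
\[
\frac{|F^{i\ov{i}}\Theta_{i\ov{i}}^k u_{V_1V_1k}|}{\lambda_1} \;\leq\; \epsilon\sum_{\alpha,\beta}\frac{F^{i\ov{i}}|u_{i\alpha\beta}|^2}{\lambda_1^2} + C_\epsilon\mathcal{F}.
\]
Choosing $\epsilon$ small and fixed consumes a further fraction of the $|u_{i\alpha\beta}|^2$ reserve; this is precisely what forces the coefficient $1/(C_A\lambda_1^2)$ (rather than a universal constant) in the final inequality, and is the source of the $A$-dependence of $C_A$ through the various Cauchy--Schwarz choices tied to $A$ above. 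Once all such absorptions are carried out and the surviving quadratic residues are swept into $-C\mathcal{F}$, the inequality \eqref{lem 2 eqn 1} drops out.
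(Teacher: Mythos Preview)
Your plan for the first three reductions (identifying $(\lambda_1)_i=u_{V_1V_1i}$, splitting the $F^{i\ov{j},p\ov{q}}$ term via \eqref{concave}, and rewriting $-Ae^{-Au}F^{i\ov{i}}u_{i\ov{i}}$ using \eqref{lem 1 eqn 1}) is fine and matches the paper. The problem is your treatment of the four torsion error terms.

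You attempt to absorb each torsion term separately by Cauchy--Schwarz into an adjacent good term. This fails for the errors coming from \eqref{lem 1 eqn 3} and \eqref{lem 1 eqn 4} because of an index mismatch: the torsion factor $\Theta_{i\ov{i}}^{k}$ carries a free index $k$, so the third-order derivative that appears is $u_{\alpha\beta k}$ (respectively $u_{V_1V_1k}$), whereas the only available positive third-order reservoir is $\sum_i F^{i\ov{i}}|u_{i\alpha\beta}|^{2}$, in which the first slot is tied to the same $i$ as the coefficient $F^{i\ov{i}}$. After Cauchy--Schwarz the quadratic piece you must control is $\sum_i F^{i\ov{i}}\,|u_{k\alpha\beta}|^{2}=\mathcal{F}\,|u_{k\alpha\beta}|^{2}$, and there is no a priori relation $\mathcal{F}\le C\,F^{k\ov{k}}$ that would let you dominate this by $F^{k\ov{k}}|u_{k\alpha\beta}|^{2}$. (The same obstruction applies to your proposed estimate $\frac{|F^{i\ov{i}}\Theta_{i\ov{i}}^{k}u_{V_1V_1k}|}{\lambda_1}\le \epsilon\sum_{\alpha,\beta}\frac{F^{i\ov{i}}|u_{i\alpha\beta}|^{2}}{\lambda_1^{2}}+C_\epsilon\mathcal{F}$: the left side involves $u_{\cdots k}$ but the right side only offers $u_{i\cdots}$.) A secondary issue is that your absorption of $2Ae^{-Au}F^{i\ov{i}}\mathrm{Re}(\Theta_{i\ov{i}}^{k}u_k)$ produces a residual of size $Ce^{-Au}\mathcal{F}$, which depends on $A$; since the constant $C$ in $-C\mathcal{F}$ is later used to \emph{choose} $A$ (see the end of Section~4), this would make the argument circular.

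The paper avoids all of this by a single observation: weighted exactly as they appear after substituting Lemma~\ref{lem 1} into \eqref{Q ii}, the four torsion errors sum to
\[
-2F^{i\ov{i}}\,\mathrm{Re}\big(\Theta_{i\ov{i}}^{k}\,\hat{Q}_k\big),
\]
since $\hat{Q}_k=\frac{u_{V_1V_1k}}{\lambda_1}+\xi'(|\rho|_g^2)_k+\eta'(|\de u|_g^2)_k-Ae^{-Au}u_k$ reproduces precisely those four pieces (one uses $\ov{\Theta_{i\ov{i}}^{k}}=\Theta_{i\ov{i}}^{\ov{k}}$ to match the $\eta'$ term). At the maximum point $x_0$ one has $\hat{Q}_k=0$, so these terms vanish exactly---no Cauchy--Schwarz, no absorption, and no $A$-dependence enters the $-C\mathcal{F}$ remainder. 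Once this cancellation is noted, the rest is just reading off the coefficients $\eta'\ge\tfrac{1}{C}$, $\xi'\ge\tfrac{1}{18L^2}$, and $\lambda_1\ge L/C_A$ to convert $2\xi'\sum F^{i\ov{i}}|u_{i\alpha\beta}|^2$ into the stated $\sum F^{i\ov{i}}|u_{i\alpha\beta}|^2/(C_A\lambda_1^2)$.
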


\begin{proof}
Combining  \eqref{Q ii} and Lemma \ref{lem 1},
\begin{equation}\label{lem 2 eqn 2}
\begin{split}
0 \geq {} & 2\sum_{\alpha>1}\frac{F^{i\ov{i}}|u_{V_{1}V_{\alpha}i}|^2}{\lambda_{1}(\lambda_{1}-\lambda_{\alpha})}
-\frac{F^{i\ov{j},p\ov{q}}\chi_{i\ov{j}V_1}\chi_{p\ov{q}V_1}}{\lambda_1}-\frac{F^{i\ov{i}}|(\lambda_1)_i|^2}{\lambda_1^2} \\[1mm]
& +2\xi'\sum_{\alpha,\beta}F^{i\ov{i}}|u_{i\alpha\beta}|^{2}+\xi''\sum_{\alpha,\beta}F^{i\ov{i}}|(|\rho|_g^2)_i|^2 \\
& +\eta'\sum_{p}F^{i\ov{i}}\left(|u_{ip}|^2+|u_{i\ov{p}}|^2\right)+\eta''F^{i\ov{i}}|(|\de u|_g^2)_i|^2 \\
& +A^2e^{-Au}F^{i\ov{i}}|u_i|^2+Ae^{-Au}F^{i\ov{i}}(\underline{\chi}_{i\ov{i}}-\chi_{i\ov{i}})-(C+C\eta'+CL^{2}\xi')\mathcal{F} \\
& -\frac{2F^{i\ov{i}}\mathrm{Re}\big(\Theta_{i\ov{i}}^ku_{V_1V_1k}\big)}{\lambda_1}
-4\xi'\sum_{\alpha,\beta}F^{i\ov{i}}\rho_{\alpha\beta}\mathrm{Re}\big(\Theta_{i\ov{i}}^ku_{\alpha\beta k}\big) \\[2mm]
& -2\eta'\sum_{p}F^{i\ov{i}}\mathrm{Re}\left(\Theta_{i\ov{i}}^ku_{k\ov{p}}u_p+\ov{\Theta_{i\ov{i}}^k}u_{\ov{k}\ov{p}}u_p\right)
+2Ae^{-Au}F^{i\ov{i}}\mathrm{Re}\big(\Theta_{i\ov{i}}^ku_k\big).
\end{split}
\end{equation}
Thanks to \eqref{concave} and the concavity of $f$,
\[
\begin{split}
-\frac{F^{i\ov{j},p\ov{q}}\chi_{i\ov{j}V_1}\chi_{p\ov{q}V_1}}{\lambda_1}
= {} & -\sum_{i,q}\frac{f_{iq}\chi_{i\ov{i}V_1}\chi_{q\ov{q}V_1}}{\lambda_1}
-\sum_{i\neq q}\frac{(F^{i\ov{i}}-F^{q\ov{q}})|\chi_{i\ov{q}V_1}|^2}{\lambda_1(\chi_{i\ov{i}}-\chi_{q\ov{q}})} \\
\geq {} & \sum_{i\neq q}\frac{(F^{q\ov{q}}-F^{i\ov{i}})|\chi_{i\ov{q}V_1}|^2}{\lambda_1(\chi_{i\ov{i}}-\chi_{q\ov{q}})}.
\end{split}
\]
Using \eqref{formulas} and $B_{\alpha\beta i}=0$ at $x_{0}$, we see that
\begin{equation}\label{lambda 1 i}
(\lambda_{1})_{i} = \lambda_{1}^{\alpha\beta}(\Phi_{\beta}^{\alpha})_{i}
= V_{1}^{\alpha}V_{1}^{\beta}(u_{\alpha\beta i}-B_{\alpha\beta i}) = u_{V_{1}V_{1}i},
\end{equation}
and so
\[
-\frac{F^{i\ov{i}}|(\lambda_1)_i|^2}{\lambda_1^2}
= -\frac{F^{i\ov{i}}|u_{V_{1}V_{1}i}|^2}{\lambda_1^2}.
\]
By the definition of $\eta$, $\xi$ and $\hat{Q}$, we have
\[
\frac{1}{C} \leq \eta' \leq C, \quad
\frac{1}{18L^2} \leq \xi' \leq \frac{1}{3L^2}, \quad
\frac{L}{C_A} \leq \lambda_1 \leq L.
\]
From $\hat{Q}_{k}=0$, we obtain $-2F^{i\ov{i}}\mathrm{Re}(\Theta_{i\ov{i}}^{k}\hat{Q}_k)=0$. Combining this with $\ov{\Theta^{k}_{i\ov{i}}}=\Theta^{\ov{k}}_{i\ov{i}}$, we have that the last two lines of \eqref{lem 2 eqn 2} are zero. Substituting the above inequalities into \eqref{lem 2 eqn 2} gives \eqref{lem 2 eqn 1}.
\end{proof}

\subsection{Third order terms}
In this subsection, we deal with the third order terms in \eqref{lem 2 eqn 1}. The bad (non-positive) third order term is
\[
B := \frac{F^{i\ov{i}}|u_{V_{1}V_{1}i}|^2}{\lambda_1^2}.
\]
To control $B$, we define the set
\[
S = \{i\in\{1,2,\ldots,n-1\}~|~ F^{i\ov{i}} < A^{-2}e^{2Au(x_0)}F^{i+1\ov{i+1}} \}.
\]
Note that $S$ may be empty. Let $i_{0}$ be the maximal element of $S$. If $S=\emptyset$, then set $i_{0}=0$. Using $i_0$, we define another set
\[
I = \{i~|~ i > i_0 \}.
\]
Clearly, $I$ is not empty since $n\in I$. Roughly speaking, if $i\in I$, then $F^{i\ov{i}}$ is comparable with $F^{n\ov{n}}$. If $i\notin I$, then $F^{i\ov{i}}\ll F^{n\ov{n}}$. Specifically, note that if $i\in I$, then $i\not\in S$, so:
\[
F^{i\ov{i}}\geq A^{-2}e^{2Au}F^{i+1\ov{i+1}}.
\]
Since $i+1, i+2, \ldots, n \in I$ also, we get:
\begin{equation}\label{F ii}
\begin{split}
F^{i\ov{i}} & {} \geq A^{-4}e^{4Au}F^{i+2\ov{i+2}}\geq \ldots \\
& {} \geq A^{-2(n-i)}e^{2(n-i)Au}F^{n\ov{n}} \geq A^{-2n}e^{2nAu}F^{n\ov{n}}
\end{split}
\end{equation}
as $A\geq 1$ and $u\leq-1$. As we always have $F^{i\ov{i}}\leq F^{n\ov{n}}$, it follows that, for those $i\in I$, $F^{i\ov{i}}$ is comparable to $F^{n\ov{n}}$ by a uniform factor of $A^{-2n}e^{2nAu}$ (as $\|u\|_{L^\infty}$ is under control).

We now decompose the bad term $B$ into three terms based on $I$
\[
\begin{split}
B = {} & \sum_{i\notin I}\frac{F^{i\ov{i}}|u_{V_{1}V_{1}i}|^2}{\lambda_1^2}
+2\ve\sum_{i\in I}\frac{F^{i\ov{i}}|u_{V_{1}V_{1}i}|^2}{\lambda_1^2}
+(1-2\ve)\sum_{i\in I}\frac{F^{i\ov{i}}|u_{V_{1}V_{1}i}|^2}{\lambda_1^2} \\
=: {} & B_1+B_2+B_3,
\end{split}
\]
where $\ve\in\left(0,\frac{1}{2}\right)$ is a constant to be determined later.

\subsection*{$\bullet$ The terms $B_1$ and $B_2$}
Since $x_0$ is the maximum point of $\hat{Q}$, then $\hat{Q}_i(x_0)=0$. Combining this with $(\lambda_{1})_{i}=u_{V_{1}V_{1}i}$ (see \eqref{lambda 1 i}), we obtain
\begin{equation}\label{Q i}
\begin{split}
0 = \hat{Q}_{i}
= {} & \frac{(\lambda_{1})_i}{\lambda_1}+\xi'\cdot(|\rho|_g^2)_i+\eta'\cdot(|\de u|_g^2)_i-Ae^{-Au}u_i \\
= {} & \frac{u_{V_{1}V_{1}i}}{\lambda_1}+\xi'\cdot(|\rho|_g^2)_i+\eta'\cdot(|\de u|_g^2)_i-Ae^{-Au}u_i.
\end{split}
\end{equation}
We use \eqref{Q i} to deal with $B_1$ and $B_2$ as follows.

\begin{lemma}\label{bad 1 2}
At $x_0$, we have
\[
B_1+B_2 \leq \xi''F^{i\ov{i}}|(|\rho|_g^2)_i|^2+\eta''F^{i\ov{i}}|(|\de u|_g^2)_i|^2+6\ve A^{2}e^{-2Au}F^{i\ov{i}}|u_{i}|^{2}+C\mathcal{F}.
\]
\end{lemma}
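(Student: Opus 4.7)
The plan is to extract as much as possible from the first-order criticality condition $\hat{Q}_i(x_0) = 0$ recorded in \eqref{Q i}. Since $(\lambda_1)_i = u_{V_1V_1i}$ at $x_0$ by \eqref{lambda 1 i}, this equation expresses $u_{V_1V_1i}/\lambda_1$ as the linear combination
\[
\frac{u_{V_1V_1i}}{\lambda_1} = -\xi'(|\rho|_g^2)_i - \eta'(|\de u|_g^2)_i + Ae^{-Au}u_i.
\]
Squaring, applying $|a+b+c|^2 \leq 3(|a|^2+|b|^2+|c|^2)$, and using the identities $\xi''=3(\xi')^2$ and $\eta''=3(\eta')^2$ produces the pointwise bound
\[
\frac{|u_{V_1V_1i}|^2}{\lambda_1^2} \leq \xi''|(|\rho|_g^2)_i|^2 + \eta''|(|\de u|_g^2)_i|^2 + 3A^2 e^{-2Au}|u_i|^2,
\]
which is the common starting point for both $B_1$ and $B_2$.

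The essential point is that the ``bad'' term $3A^2 e^{-2Au}|u_i|^2$ introduces an $A^2$-blowup that we cannot afford in general, but for indices $i \notin I$ the smallness of $F^{i\ov{i}}$ relative to $\mathcal{F}$ compensates for it exactly. Indeed, for $i\leq i_0$ the monotonicity $F^{1\ov 1}\leq\cdots\leq F^{n\ov n}$ together with $i_0 \in S$ gives
\[
F^{i\ov{i}} \leq F^{i_0\ov{i_0}} < A^{-2}e^{2Au(x_0)}F^{i_0+1\ov{i_0+1}} \leq A^{-2}e^{2Au(x_0)}\mathcal{F}.
\]
(If $S=\emptyset$, then $I=\{1,\ldots,n\}$ and the sum defining $B_1$ is empty.) Since $|u_i|$ is bounded by the $C^1$-estimate, multiplying the pointwise bound by $F^{i\ov{i}}$ and summing over $i\notin I$ produces a term $3A^2 e^{-2Au(x_0)}\sum_{i\notin I}F^{i\ov{i}}|u_i|^2$ in which the $A^2 e^{-2Au(x_0)}$ cancels against the $A^{-2}e^{2Au(x_0)}$ factor from the bound on $F^{i\ov{i}}$, giving $\leq C\mathcal{F}$. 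Hence
\[
B_1 \leq \sum_{i\notin I}\xi''F^{i\ov{i}}|(|\rho|_g^2)_i|^2 + \sum_{i\notin I}\eta''F^{i\ov{i}}|(|\de u|_g^2)_i|^2 + C\mathcal{F}.
\]

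For $B_2$, I would multiply the same pointwise bound by $2\ve F^{i\ov{i}}$ and sum over $i\in I$, producing the three analogous terms with coefficient $2\ve$. Adding $B_1$ and $B_2$, the coefficients on $\xi''F^{i\ov{i}}|(|\rho|_g^2)_i|^2$ and $\eta''F^{i\ov{i}}|(|\de u|_g^2)_i|^2$ equal $1$ for $i\notin I$ and $2\ve<1$ for $i\in I$, so each of these combined sums is dominated by the corresponding full sum over all $i$. The remaining $|u_i|^2$ contribution from $B_2$ is $6\ve A^2 e^{-2Au}\sum_{i\in I}F^{i\ov{i}}|u_i|^2$, which is trivially bounded by $6\ve A^2 e^{-2Au}\sum_i F^{i\ov{i}}|u_i|^2$. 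Combining everything yields the claimed inequality. The conceptual crux of the argument -- and the reason the three-way decomposition $B=B_1+B_2+B_3$ is natural in the first place -- is precisely the cancellation between the smallness of $F^{i\ov{i}}$ for $i\notin I$ and the $A^2 e^{-2Au(x_0)}$ blowup produced by the criticality identity.
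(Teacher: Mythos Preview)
Your proof is correct and follows essentially the same approach as the paper: both use the criticality condition \eqref{Q i} together with the elementary inequality $|a+b+c|^2\leq 3(|a|^2+|b|^2+|c|^2)$, absorb the $A^2e^{-2Au}$ blowup for $i\notin I$ via the defining inequality $F^{i\ov{i}}<A^{-2}e^{2Au}\mathcal{F}$, and then combine the $i\notin I$ and $i\in I$ contributions using $2\ve<1$ and the identities $\xi''=3(\xi')^2$, $\eta''=3(\eta')^2$. The only cosmetic difference is that you state a single pointwise bound first and then weight it, whereas the paper carries the weights $F^{i\ov{i}}$ throughout.
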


\begin{proof}
By \eqref{Q i} and the Cauchy-Schwarz inequality, we see that
\[
\begin{split}
B_1 = {} & \sum_{i\notin I}F^{i\ov{i}}\left|\xi'\cdot(|\rho|_g^2)_i+\eta'\cdot(|\de u|_g^2)_i-Ae^{-Au}u_i\right|^{2} \\
\leq {} & 3(\xi')^{2}\sum_{i\notin I}F^{i\ov{i}}|(|\rho|_g^2)_i|^2+3(\eta')^{2}\sum_{i\notin I}F^{i\ov{i}}|(|\de u|_g^2)_i|^2
+3A^{2}e^{-2Au}\sum_{i\notin I}F^{i\ov{i}}|u_{i}|^{2} \\
\leq {} & 3(\xi')^{2}\sum_{i\notin I}F^{i\ov{i}}|(|\rho|_g^2)_i|^2+3(\eta')^{2}\sum_{i\notin I}F^{i\ov{i}}|(|\de u|_g^2)_i|^2
+CA^{2}e^{-2Au}\sum_{i\notin I}F^{i\ov{i}}.
\end{split}
\]
For $i\notin I$, by definition of $I$, we have $i\in S$ and so
\[
F^{i\ov{i}} < A^{-2}e^{2Au}F^{i+1\ov{i+1}} \leq A^{-2}e^{2Au}\mathcal{F},
\]
which implies
\[
B_1 \leq 3(\xi')^{2}\sum_{i\notin I}F^{i\ov{i}}|(|\rho|_g^2)_i|^2
+3(\eta')^{2}\sum_{i\notin I}F^{i\ov{i}}|(|\de u|_g^2)_i|^2+C\mathcal{F}.
\]
On the other hand,
\[
\begin{split}
B_2 \leq {} & 6\ve(\xi')^{2}\sum_{i\in I}F^{i\ov{i}}|(|\rho|_g^2)_i|^2+6\ve(\eta')^{2}\sum_{i\in I}F^{i\ov{i}}|(|\de u|_g^2)_i|^2
+6\ve A^{2}e^{-2Au}\sum_{i\in I}F^{i\ov{i}}|u_i|^2 \\
\leq {} & 6\ve(\xi')^{2}\sum_{i\in I}F^{i\ov{i}}|(|\rho|_g^2)_i|^2+6\ve(\eta')^{2}\sum_{i\in I}F^{i\ov{i}}|(|\de u|_g^2)_i|^2
+6\ve A^{2}e^{-2Au}F^{i\ov{i}}|u_i|^2.
\end{split}
\]
Combining the above inequalities and using $\ve\in\left(0,\frac{1}{2}\right)$, we obtain
\[
B_1+B_2 \leq 3(\xi')^{2}F^{i\ov{i}}|(|\rho|_g^2)_i|^2+3(\eta')^{2}F^{i\ov{i}}|(|\de u|_g^2)_i|^2
+6\ve A^{2}e^{-2Au}F^{i\ov{i}}|u_{i}|^{2}+C\mathcal{F}.
\]
Then Lemma \ref{bad 1 2} follows from $\xi''=3(\xi')^2$ and $\eta''=3(\eta')^2$.
\end{proof}

\subsection*{$\bullet$ The term $B_3$}
In \eqref{lem 2 eqn 1}, the good (non-negative) third order terms are
\[
G_1+G_2+G_3 := 2\sum_{\alpha>1}\frac{F^{i\ov{i}}|u_{V_{1}V_{\alpha}i}|^2}{\lambda_{1}(\lambda_{1}-\lambda_{\alpha})}
+\sum_{i\neq q}\frac{(F^{i\ov{i}}-F^{q\ov{q}})|\chi_{i\ov{q}V_1}|^2}{\lambda_1(\chi_{q\ov{q}}-\chi_{i\ov{i}})}
+\sum_{\alpha,\beta}\frac{F^{i\ov{i}}|u_{i\alpha\beta}|^{2}}{C_{A}\lambda_{1}^{2}}.
\]
We will use $G_1$, $G_{2}$ and $G_3$ to control $B_3$. Define
\[
W_{1} = \frac{1}{\sqrt{2}}(V_{1}-\sqrt{-1}JV_{1}), \quad
W_{1} = \sum_{q}\nu_{q}e_{q}, \quad
JV_{1} = \sum_{\alpha>1}\mu_{\alpha}V_{\alpha},
\]
where we used $V_1$ is orthogonal to $JV_1$. At $x_{0}$, since $V_{1}$ and $e_{q}$ are $g$-unit, then
\[
\sum_{q}|\nu_{q}|^{2} = 1, \quad
\sum_{\alpha>1}\mu_{\alpha}^{2} = 1.
\]

\begin{lemma}\label{bad 3 lem}
At $x_0$, we have
\[
|\nu_q| \leq \frac{C_A}{\lambda_1}, \quad \text{for $q\in I$},
\]
where $C_A$ is a uniform constant depending on $A$.
\end{lemma}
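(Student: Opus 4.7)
The key observation is the eigenvector identity at $x_0$: since $V_1$ is a unit real eigenvector of $\nabla^2 u$ with eigenvalue $\lambda_1$, one has $u_{V_1\bar q} = \lambda_1\, g(V_1,\bar e_q)$. A direct computation in the real coordinates (using $\bar e_q = \tfrac{1}{\sqrt 2}(\partial_{2q-1} + i\partial_{2q})$ and the definition of $W_1$) gives $g(V_1,\bar e_q) = \tfrac{1}{\sqrt 2}(V_1^{2q-1} + i V_1^{2q}) = \nu_q/\sqrt 2$, so
\[
u_{V_1\bar q} \;=\; \frac{\lambda_1\,\nu_q}{\sqrt 2}.
\]
Thus the claimed bound $|\nu_q|\le C_A/\lambda_1$ is equivalent to establishing a uniform bound (depending on $A$) on $|u_{V_1\bar q}|$ for each $q\in I$.

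The plan is to decompose $V_1=\tfrac{1}{\sqrt 2}(W_1+\bar W_1)$ and treat the $(1,1)$ and $(0,2)$ pieces separately. Writing
\[
u_{V_1\bar q} \;=\; \tfrac{1}{\sqrt 2}\bigl(u_{W_1\bar q} + u_{\bar W_1\bar q}\bigr)
\]
and expanding $u_{W_1\bar q} = \sum_p \nu_p u_{p\bar q}$, the $(1,1)$-piece is controlled using Theorem \ref{Szekelyhidi estimates}: we have $|\partial\bar\partial u|_g \le C$, and \eqref{Theta 1} together with the diagonal form of $\chi$ at $x_0$ yields $u_{p\bar q} = \chi_{q\bar q}\delta_{pq} + O(1)$. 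Consequently $u_{W_1\bar q} = \nu_q\chi_{q\bar q} + O(1)$ with $|\chi_{q\bar q}|\le C$, and rearranging gives
\[
(\lambda_1 - \chi_{q\bar q})\,\nu_q \;=\; u_{\bar W_1\bar q} + O(1).
\]
Since $\lambda_1-\chi_{q\bar q}\ge \tfrac12\lambda_1$ whenever $\lambda_1$ is large enough (and the conclusion of the lemma is trivial otherwise after enlarging $C_A$), the problem reduces to bounding the $(0,2)$-piece $u_{\bar W_1\bar q}$ by a constant $C_A$ for $q\in I$.

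The main obstacle is this $(0,2)$-bound, since the only a priori control on $u_{\bar p\bar q}$ from $|\nabla^2 u|_g\le L$ gives $|u_{\bar W_1\bar q}|\le CL$, which is useless for our purposes. To improve this when $q\in I$, I would use the critical point equation $\hat Q_q=0$, which reads
\[
u_{V_1V_1q} \;=\; \lambda_1\bigl[Ae^{-Au}u_q - \xi'(|\rho|_g^2)_q - \eta'(|\de u|_g^2)_q\bigr],
\]
together with the definition of $I$ via \eqref{F ii}, i.e.\ $F^{q\bar q} \ge A^{-2n}e^{2nAu(x_0)}F^{n\bar n}$. The large size of $F^{q\bar q}$ for $q\in I$ should allow a Hou-Ma-Wu-style Cauchy-Schwarz absorption of the third-order contributions appearing in $(|\rho|_g^2)_q$ against the extra good term $G_3 = \sum_{\alpha,\beta}F^{i\bar i}|u_{i\alpha\beta}|^2/(C_A\lambda_1^2)$ produced in Lemma \ref{lem 2}. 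The delicate point is to carry out this absorption while preserving the $A$-dependent structure, since $(|\rho|_g^2)_q$ couples all components of $\nabla^3 u$ and the coefficient $\xi'$ is itself of size $L^{-2}$. Once $|u_{\bar W_1\bar q}|\le C_A$ is established, the displayed identity immediately yields $|\nu_q|\le C_A/\lambda_1$ and completes the proof.
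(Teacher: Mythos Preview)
Your initial reduction is correct and close in spirit to the paper's: the eigenvector identity $u_{V_1\bar q}=\lambda_1\nu_q/\sqrt 2$ and the decomposition into $(1,1)$ and $(0,2)$ pieces are valid, and the $(1,1)$ piece is indeed harmless by the bound on $|\partial\bar\partial u|_g$. The paper reaches the same endpoint in real coordinates, writing $V_1^\alpha=\lambda_1^{-1}\sum_\beta\Phi^\alpha_\beta V_1^\beta$ and observing that $|\nu_q|\le |V_1^{2q-1}|+|V_1^{2q}|$; both formulations reduce the lemma to showing that the relevant second-order row of $\nabla^2 u$ (the $\bar q$-row, for $q\in I$) is bounded by $C_A$.

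The gap is in your proposed mechanism for that second-order bound. You suggest using the critical equation $\hat Q_q=0$ together with absorption into $G_3$, but $\hat Q_q=0$ relates a \emph{third}-order quantity $u_{V_1V_1q}$ to other third-order quantities $(|\rho|^2_g)_q$ and first-order terms; it contains no information that lets you control a single second-order entry like $u_{\bar W_1\bar q}$. Absorbing $(|\rho|^2_g)_q$ into $G_3$ would (at best) control third-order terms, not the $(0,2)$ Hessian entry you need. As written, the argument does not close.

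What the paper actually does is go one order higher: feed the crude Cauchy--Schwarz bound
\[
B\le 3(\xi')^2\sum_i F^{i\bar i}|(|\rho|^2_g)_i|^2+3(\eta')^2\sum_i F^{i\bar i}|(|\partial u|^2_g)_i|^2+C_A\mathcal F
\]
(obtained from $\hat Q_i=0$) into the full second-order maximum-principle inequality of Lemma~\ref{lem 2}. Because $\xi''=3(\xi')^2$ and $\eta''=3(\eta')^2$, the third-order contributions cancel exactly, and after dropping $G_1,G_2,G_3\ge 0$ one is left with
\[
\sum_p F^{i\bar i}\bigl(|u_{ip}|^2+|u_{i\bar p}|^2\bigr)\le C_A\,\mathcal F.
\]
Now the definition of $I$ enters: for $i\in I$ one has $F^{i\bar i}\ge A^{-2n}e^{2nAu}F^{n\bar n}\ge c_A\mathcal F$, so dividing gives $\sum_{i\in I}\sum_p(|u_{ip}|^2+|u_{i\bar p}|^2)\le C_A$. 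This is exactly the second-order row bound that your reduction needs (it bounds $|u_{\bar p\bar q}|=|u_{pq}|$ for $q\in I$ and all $p$), and the lemma follows immediately. The point you missed is that the required second-order control comes from the \emph{good second-order term} already sitting in Lemma~\ref{lem 2}, not from any third-order absorption.
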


\begin{proof}
By \eqref{Q i} and the Cauchy-Schwarz inequality,
\[
B \leq 3(\xi')^{2}F^{i\ov{i}}|(|\rho|_g^2)_i|^2+3(\eta')^{2}F^{i\ov{i}}|(|\de u|_g^2)_i|^2+CA^{2}e^{-2Au}\mathcal{F}.
\]
Substituting this into \eqref{lem 2 eqn 1} and dropping non-negative terms $G_i$ ($i=1,2,3$), we see that
\[
\begin{split}
0 \geq {} & (\xi''-3(\xi')^{2})\sum_{\alpha,\beta}F^{i\ov{i}}|(|\rho|_g^2)_i|^2+(\eta''-3(\eta')^{2})F^{i\ov{i}}|(|\de u|_g^2)_i|^2 \\
& +\frac{1}{C}\sum_{p}F^{i\ov{i}}\left(|u_{ip}|^2+|u_{i\ov{p}}|^2\right)-CA^2e^{-2Au}\mathcal{F}.
\end{split}
\]
Thanks to $\xi''=3(\xi')^2$ and $\eta''=3(\eta')^2$,
\[
\sum_{p}F^{i\ov{i}}\left(|u_{ip}|^2+|u_{i\ov{p}}|^2\right) \leq C_A\mathcal{F}.
\]
For $i\in I$, using \eqref{F ii}, we have
\[
F^{i\ov{i}} \geq A^{-2n}e^{2nAu}F^{n\ov{n}},
\]
which implies
\[
\sum_{i\in I}\sum_{p}\left(|u_{ip}|^2+|u_{i\ov{p}}|^2\right) \leq C_A.
\]
Since $e_{i} = \frac{1}{\sqrt{2}}\left(\frac{\de}{\de x^{2i-1}}-\sqrt{-1}\frac{\de}{\de x^{2i}}\right)$ and $I=\{i_0+1,\ldots,n\}$, then
\begin{equation}\label{bad 3 lem eqn}
\sum_{\alpha=2i_0+1}^{2n}\sum_{\beta=1}^{2n}|u_{\alpha\beta}|^2 \leq C_{A}.
\end{equation}
Recalling that $V_{1}$ is the eigenvector of $\Phi$ corresponding to $\lambda_{1}$, we have
\[
V_{1}^{\alpha} = \frac{1}{\lambda_{1}}\sum_{\beta=1}^{2n}\Phi_{\beta}^{\alpha}V_{1}^{\beta}.
\]
Combining this with $\Phi_{\beta}^{\alpha}=g^{\alpha\gamma}(u_{\gamma\beta}-B_{\gamma\beta})$ and \eqref{bad 3 lem eqn},
\[
|V_{1}^{\alpha}| \leq \frac{1}{\lambda_{1}}\left(\sum_{\beta=1}^{2n}|u_{\alpha\beta}|+C\right)
\leq \frac{C_{A}}{\lambda_{1}}, \quad \text{for $\alpha=2i_0+1,\ldots,2n$}.
\]
It then follows that
\[
|\nu_{q}| \leq |V_{1}^{2q-1}|+|V_{1}^{2q}| \leq \frac{C_{A}}{\lambda_{1}}, \quad \text{for $q\in I$}.
\]
\end{proof}

We can now deal with the term $B_{3}$. By the definitions of $W_{1}$, $\nu_q$ and $\mu_\alpha$, we compute
\[
\begin{split}
u_{V_1V_1 i} = {} & -\sqrt{-1}u_{V_1JV_1 i}+\sqrt{2}u_{V_1\ov{W_1}i} \\[3mm]
= {} & -\sqrt{-1}\sum_{\alpha>1}\mu_\alpha u_{V_1 V_\alpha i}+\sqrt{2}\sum_{q}\ov{\nu_q}u_{V_{1}\ov{q}i} \\
= {} & -\sqrt{-1}\sum_{\alpha>1}\mu_\alpha u_{V_1 V_\alpha i}+\sqrt{2}\sum_{q}\ov{\nu_q}u_{i\ov{q}V_1}+O(1) \\
= {} & -\sqrt{-1}\sum_{\alpha>1}\mu_\alpha u_{V_1 V_\alpha i}+\sqrt{2}\sum_{q\notin I}\ov{\nu_q}u_{i\ov{q}V_1}
+\sqrt{2}\sum_{q\in I}\ov{\nu_q}u_{i\ov{q}V_1}+O(1).
\end{split}
\]
By \eqref{Theta},
\[
\chi_{i\ov{q}} = (\chi_0)_{i\ov{q}}+u_{i\ov{q}}+\Theta_{i\ov{q}}^ku_k+\Theta_{i\ov{q}}^{\ov{k}}u_{\ov{k}},
\]
we have
\[
u_{i\ov{q}V_1} = \chi_{i\ov{q}V_1}+O(\lambda_1).
\]
Thus,
\[
u_{V_1V_1 i} =  -\sqrt{-1}\sum_{\alpha>1}\mu_\alpha u_{V_1 V_\alpha i}+\sqrt{2}\sum_{q\notin I}\ov{\nu_q}\chi_{i\ov{q}V_1}
+\sqrt{2}\sum_{q\in I}\ov{\nu_q}u_{i\ov{q}V_1}+O(\lambda_1).
\]
Using the Cauchy-Schwarz inequality and Lemma \ref{bad 3 lem}, we decompose $B_{3}$ into several parts:
\[
\begin{split}
B_{3} = {} & (1-2\ve)\sum_{i\in I}\frac{F^{i\ov{i}}|u_{V_{1}V_{1}i}|^2}{\lambda_1^2} \\
\leq {} & (1-\ve)\sum_{i\in I}\frac{F^{i\ov{i}}}{\lambda_1^2}\left|\sum_{\alpha>1}\mu_\alpha u_{V_1 V_\alpha i}\right|^{2}
+\frac{C}{\ve}\sum_{i\in I}\sum_{q\notin I}\frac{F^{i\ov{i}}|\chi_{i\ov{q}V_1}|^{2}}{\lambda_1^2} \\
& +\frac{C_A}{\ve\lambda_1^2}\sum_{i\in I}\sum_{q\in I}\frac{F^{i\ov{i}}|u_{i\ov{q}V_1}|^{2}}{\lambda_1^2}+\frac{C\mathcal{F}}{\ve} \\
=: {} & B_{31}+B_{32}+B_{33}+\frac{C\mathcal{F}}{\ve}.
\end{split}
\]
\begin{lemma}\label{bad 3}
At $x_0$, we have
\[
B_3 \leq G_1+G_2+G_3+\frac{C\mathcal{F}}{\ve} ,
\]
assuming without loss of generality that $\lambda_1\geq\frac{C_A}{\ve}$ for some uniform constant $C_A$ depending on $A$.
\end{lemma}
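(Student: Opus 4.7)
The plan is to bound $B_{3k}$ by $G_k$ separately for each $k = 1, 2, 3$. Under the hypothesis $\lambda_1 \geq C_A/\ve$ (outside of which the bound is trivial since every term on the left is then a priori controlled by constants depending only on $A$ and $\ve$), extra factors of $1/\lambda_1$ and $1/\ve$ can be absorbed into the $C\mathcal{F}/\ve$ tail.

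For $B_{31}$, I would apply Cauchy--Schwarz with weights $\lambda_1 - \lambda_\alpha > 0$ (strictly positive thanks to the perturbation trick defining $\Phi$):
\[
\Big|\sum_{\alpha>1}\mu_\alpha u_{V_1V_\alpha i}\Big|^2 \leq \Big(\sum_{\alpha>1}\mu_\alpha^2(\lambda_1-\lambda_\alpha)\Big)\sum_{\alpha>1}\frac{|u_{V_1V_\alpha i}|^2}{\lambda_1-\lambda_\alpha}.
\]
The prefactor equals $\lambda_1 - (\Phi JV_1, JV_1)$ since $|JV_1|_g = 1$, and must be bounded by $2\lambda_1$. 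This rests on the comparison $L \leq C\lambda_1(x_0)$, which would follow by comparing $\hat{Q}(x_0)$ with the value of $\hat{Q}$ at the global maximum of $\lambda_1(\nabla^2 u)$, noting that $\xi$, $\eta$, and $e^{-Au}$ all oscillate over ranges independent of $L$. Combined with the factor $1-\ve < 1$ and the leading factor $2$ in the definition of $G_1$, this yields $B_{31} \leq G_1$.

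For $B_{32}$, I would use the defining property of $I$: since $i > i_0 \geq q$ whenever $i \in I$ and $q \notin I$, and $i_0 \in S$ when $S \neq \emptyset$,
\[
F^{q\ov{q}} \leq F^{i_0\, i_0} < A^{-2}e^{2Au(x_0)}F^{i_0+1\, i_0+1} \leq A^{-2}e^{2Au(x_0)}F^{i\ov{i}},
\]
using monotonicity of $F^{k\ov{k}}$ in $k$. For $A$ large and $u(x_0) \leq -1$ this factor is tiny, so $F^{i\ov{i}} - F^{q\ov{q}} \geq \tfrac{1}{2}F^{i\ov{i}}$. Combined with $|\chi_{q\ov{q}} - \chi_{i\ov{i}}| \leq C$ from the complex Hessian bound, $G_2$ dominates $\frac{1}{C}\sum_{i\in I,\, q\notin I}F^{i\ov{i}}|\chi_{i\ov{q}V_1}|^2/\lambda_1$, and the remaining gap in powers of $\lambda_1$ is absorbed by $\lambda_1 \geq C_A/\ve$.

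For $B_{33}$, I would expand $u_{i\ov{q}V_1}$ in real coordinates to bound $|u_{i\ov{q}V_1}|^2 \leq C\sum_{\alpha,\beta}|u_{i\alpha\beta}|^2$; then
\[
B_{33} \leq \frac{CC_A}{\ve\lambda_1^4}\sum_{i\in I}F^{i\ov{i}}\sum_{\alpha,\beta}|u_{i\alpha\beta}|^2 \leq G_3
\]
by comparison with $G_3 = \frac{1}{C_A\lambda_1^2}\sum_i F^{i\ov{i}}\sum_{\alpha,\beta}|u_{i\alpha\beta}|^2$ and $\lambda_1 \geq C_A/\ve$ (with $C_A$ enlarged if necessary). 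The hardest step is the bound for $B_{31}$: the weighted Cauchy--Schwarz itself is routine, but the prefactor bound $\sum_{\alpha>1}\mu_\alpha^2(\lambda_1-\lambda_\alpha) \leq 2\lambda_1$ depends on the subtle comparison $L \lesssim \lambda_1(x_0)$, which is where the specific structure of the test function $\hat{Q}$ becomes essential.
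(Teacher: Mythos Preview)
Your treatment of $B_{32}$ and $B_{33}$ matches the paper and is fine. The gap is in $B_{31}$: the prefactor bound you need is not just $\le C\lambda_1$ for \emph{some} constant $C$, but the sharp bound $\le 2\lambda_1 + O(1)$, so that after multiplying by $(1-\ve)$ you still land under the leading coefficient $2$ of $G_1$. Your route via $L \le C_A\lambda_1(x_0)$ only yields $(\Phi JV_1, JV_1) \ge -L \ge -C_A\lambda_1$, hence prefactor $\le (1+C_A)\lambda_1$ with $C_A$ large (it absorbs the oscillation of $e^{-Au}$ and the dimensional constant in $|\nabla^2 u|_g \le C_n\lambda_1(\nabla^2 u) + C$). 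Then $(1-\ve)(1+C_A) \le 2$ fails for small $\ve$, and $B_{31} \le G_1$ does not follow.

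The paper gets the sharp constant by a different mechanism: it exploits the already-established \emph{complex} Hessian bound $\sup_X|\de\dbar u|_g \le C$. Since $W_1 = \tfrac{1}{\sqrt{2}}(V_1 - \sqrt{-1}JV_1)$ is a unit $(1,0)$-vector, $\chi(W_1,\ov{W_1}) \ge -C$, while on the other hand $\chi(W_1,\ov{W_1}) \le \nabla^2 u(W_1,\ov{W_1}) + C = \tfrac{1}{2}(\lambda_1 + u_{JV_1JV_1}) + C$. This forces $u_{JV_1JV_1} \ge -\lambda_1 - C$, hence $\sum_{\alpha>1}\mu_\alpha^2\lambda_\alpha \ge -\lambda_1 - C$, so the prefactor is $\le 2\lambda_1 + C \le 2(1+\ve)\lambda_1$ once $\lambda_1 \ge C/\ve$. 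Then $(1-\ve)\cdot 2(1+\ve) = 2(1-\ve^2) < 2$ gives $B_{31} \le G_1$. The point is that the complex structure ties $u_{V_1V_1}$ and $u_{JV_1JV_1}$ together through the (bounded) complex Hessian, which is strictly stronger information than any comparison between $L$ and $\lambda_1(x_0)$.
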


\begin{proof}
We use $G_{i}$ to control $B_{3i}$ for $i=1,2,3$. For $B_{31}$, we compute
\[
\begin{split}
B_{31} \leq {} & (1-\ve)\sum_{i\in I}\frac{F^{i\ov{i}}}{\lambda_1^2}\left(\sum_{\alpha>1}(\lambda_{1}-\lambda_{\alpha})\mu_{\alpha}^{2}\right)
\left(\sum_{\alpha>1}\frac{|u_{V_{1}V_{\alpha}i}|^{2}}{\lambda_1-\lambda_\alpha}\right) \\
= {} & (1-\ve)\sum_{i\in I}\frac{F^{i\ov{i}}}{\lambda_1^2}\left(\lambda_{1}-\sum_{\alpha>1}\lambda_{\alpha}\mu_{\alpha}^{2}\right)
\left(\sum_{\alpha>1}\frac{|u_{V_{1}V_{\alpha}i}|^{2}}{\lambda_1-\lambda_\alpha}\right),
\end{split}
\]
where we used $\sum_{\alpha}\mu_\alpha^2=1$. On the other hand, since $\sup_{X}|\de u|_{g}$ and $\sup_{X}|\de\dbar u|_{g}$ are under control,
\[
\begin{split}
-C \leq {} & \chi(W_1, \ov{W_{1}}) = \chi_{0}(W_1, \ov{W_{1}})+(\ddbar u)(W_1, \ov{W_{1}}) \\
\leq {} & \nabla^{2}u(W_1, \ov{W_{1}})+C = \frac{1}{2}\left(u_{V_1V_1}+u_{JV_1 JV_1}\right)+C
= \frac{1}{2}\left(\lambda_1+\sum_{\alpha>1}\mu_\alpha^2\lambda_\alpha\right)+C,
\end{split}
\]
which implies
\[
\lambda_{1}-\sum_{\alpha>1}\lambda_{\alpha}\mu_{\alpha}^{2}
\leq 2\lambda_1+C \leq 2(1+\ve)\lambda_1,
\]
provided that $\lambda_1\geq\frac{C}{\ve}$. It then follows that
\[
B_{31} \leq 2(1-\ve^2)\sum_{i\in I}\sum_{\alpha>1}\frac{F^{i\ov{i}}|u_{V_{1}V_{\alpha}i}|^2}{\lambda_{1}(\lambda_{1}-\lambda_{\alpha})} \leq G_1.
\]

For $B_{32}$, we compute
\[
G_2 \geq \sum_{i\in I}\sum_{q\notin I}\frac{(F^{i\ov{i}}-F^{q\ov{q}})|\chi_{i\ov{q}V_1}|^2}{\lambda_1(\chi_{q\ov{q}}-\chi_{i\ov{i}})}
\geq \sum_{i\in I}\sum_{q\notin I}\frac{(F^{i\ov{i}}-F^{q\ov{q}})|\chi_{i\ov{q}V_1}|^2}{C\lambda_1}.
\]
For $i\in I$ and $q\notin I$, by definition of $I$,
\[
F^{q\ov{q}} \leq F^{i_0\ov{i_0}} \leq A^{-2}e^{2Au}F^{i_0+1\ov{i_0+1}} \leq A^{-2}e^{2Au}F^{i\ov{i}}.
\]
Since $A\gg1$ and $u\leq-1$:
\[
F^{i\ov{i}}-F^{q\ov{q}} \geq \frac{F^{i\ov{i}}}{2}.
\]
Thus,
\[
B_{32} = \frac{C}{\ve}\sum_{i\in I}\sum_{q\notin I}\frac{F^{i\ov{i}}|\chi_{i\ov{q}V_1}|^{2}}{\lambda_1^2}
\leq \frac{C}{\ve\lambda_1}\sum_{i\in I}\sum_{q\notin I}\frac{(F^{i\ov{i}}-F^{q\ov{q}})|\chi_{i\ov{q}V_1}|^2}{\lambda_1}
\leq G_{2}.
\]
as long as $\lambda_1\geq\frac{C}{\ve}$.

For $B_{33}$, we compute
\[
B_{33} \leq \frac{C_A}{\ve\lambda_1^2}\sum_{\alpha,\beta}\frac{F^{i\ov{i}}|u_{i\alpha\beta}|^{2}}{\lambda_1^2}
\leq \sum_{\alpha,\beta}\frac{F^{i\ov{i}}|u_{i\alpha\beta}|^{2}}{C_{A}\lambda_1^2} = G_3,
\]
provided that $\lambda_1\geq\frac{C_A}{\ve}$.
\end{proof}

\subsection{End of the Proof}
We now finish the proof of Theorem \ref{real Hessian estimate}.

\begin{proof}
Combining Lemma \ref{lem 2}, \ref{bad 1 2} and \ref{bad 3}, we obtain
\begin{equation}\label{proof eqn 1}
\begin{split}
0 \geq {} & \frac{1}{C_0}\sum_{p}F^{i\ov{i}}\left(|u_{ip}|^2+|u_{i\ov{p}}|^2\right)
+Ae^{-Au}\sum_{i}F^{i\ov{i}}(\underline{\chi}_{i\ov{i}}-\chi_{i\ov{i}}) \\
& +(A^2e^{-Au}-6\ve A^{2}e^{-2Au})F^{i\ov{i}}|u_i|^2-\frac{C_0\mathcal{F}}{\ve}.
\end{split}
\end{equation}
for a uniform constant $C_0$. Choose
\[
A = \frac{6C_0+1}{\kappa}, \quad \ve = \frac{e^{Au(x_0)}}{6}.
\]
According to Proposition \ref{subsolution prop}, there are then two possible cases:

\bigskip
\noindent
{\bf Case 1:} $\sum_{i}F^{i\ov{i}}(\underline{\chi}_{i\ov{i}}-\chi_{i\ov{i}})\geq\kappa\mathcal{F}$.

\bigskip

\noindent Combining this with our choice of $A$ and $\e$ implies that:
\[
\sum_{p}F^{i\ov{i}}\left(|u_{ip}|^2+|u_{i\ov{p}}|^2\right)+\mathcal{F} \leq 0,
\]
which is a contradiction.

\bigskip
\noindent
{\bf Case 2:} $F^{i\ov{i}}\geq\kappa\mathcal{F}$ for all $i$.

\bigskip

\noindent In this case, we obtain
\[
\sum_{p}F^{i\ov{i}}\left(|u_{ip}|^2+|u_{i\ov{p}}|^2\right) \leq C\mathcal{F}.
\]
It then follows that $\sum_{i,p}\left(|u_{ip}|^2+|u_{i\ov{p}}|^2\right)\leq C$, which implies $\lambda_1(x_0)\leq C$.
\end{proof}

\begin{remark}\label{boundary}
If $\de X\neq\emptyset$, then the above argument shows that
\[
\sup_{X}|\nabla^2 u|_g \leq \sup_{\de X}|\nabla^2 u|_g +C,
\]
where $C$ depends only on $(X,\omega)$, $\sup_{X}|u|$, $\sup_{X}|\de u|_g$, $\sup_{X}|\de\dbar u|_g$, $\chi_0$, $\underline{u}$, $\sigma_0$, $\sup_{X}h$, $\sup_{X}|\de h|_g$ and on a lower bound of $\nabla^2 h$.
\end{remark}

\section{Degenerate Equations}
In this short section, we give the proofs of Theorem \ref{degenerate estimate} and Corollary \ref{hessian}, which follow immediately from Theorem \ref{main estimate}.

\begin{theorem}
(Theorem \ref{degenerate estimate})  Suppose that $h\in C^2(X)$ satisfies $\sup_{\p\Gamma} f \leq h < \sup_\Gamma f$, and that $h_\e\in C^2(X)$ satisfy $\sup_{\p\Gamma} f < h_{\e} < \sup_\Gamma f$ and $h_\e \rightarrow h$ in $C^2$.  Let $\underline{u}$ be a $\mathcal{C}$-subsolution to the degenerate equation
\[
F(A(u)) = h(x),
\]
and let $u_\e$ be smooth solutions to the non-degenerate equations:
\[
F(A(u_\e)) = h_\e(x)
\]
for all $0 < \e$ sufficiently small. Then there exists a constant $C$, independent of $\e$ and depending only on $(X,\omega)$, $\chi_0$, $\underline{u}$, $\sigma_0$, $\sup_{X}h$, $\sup_{X}|\de h|_g$, and a lower bound of $\nabla^2 h$ such that
\[
\sup_{X}|u_\e|+\sup_{X}|\de u_\e|_g+\sup_{X}|\nabla^2 u_\e|_g \leq C,
\]
where $\nabla$ is the Levi-Civita connection of $g$.

In particular, up to extracting a subsequence, the $u_\e$ converge to a $u\in C^{1,1}(X)$ solving the degenerate version of equation \eqref{main eqn}:
\[
f^*(\lambda(u)) = h(x),
\]
where here $f^*$ is the upper semi-continuous extension of $f$ to $\ov{\Gamma}$.
\end{theorem}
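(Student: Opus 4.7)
The plan is to reduce the proof entirely to Theorem \ref{main estimate}: first use Proposition \ref{sigma 0} to transfer the $\mathcal{C}$-subsolution property from the degenerate equation $F(A)=h$ to the nearby non-degenerate equations $F(A)=h_\e$, apply the main a priori estimate uniformly in $\e$, extract a $C^{1,1}$ subsequential limit, and finally verify that the limit satisfies the degenerate equation.

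For the uniform estimate, note that by Proposition \ref{sigma 0}, the assumption that $\underline{u}$ is a $\mathcal{C}$-subsolution to $F(A)=h$ produces a $\sigma_0>0$ such that $\underline{u}$ is also a $\mathcal{C}$-subsolution to $F(A)=h+2\sigma_0$. Since $h_\e\to h$ in $C^2$, for all sufficiently small $\e$ we have $h_\e\leq h+\sigma_0$ and $\sup_{\de\Gamma}f<h_\e<\sup_\Gamma f$, so the final clause of Proposition \ref{sigma 0} gives that $\underline{u}$ is a $\mathcal{C}$-subsolution to each of the non-degenerate equations $F(A)=h_\e$, with a uniform choice of constant (for instance, $\sigma_0/2$ works). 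The $C^2$ convergence $h_\e\to h$ also yields uniform bounds on $\sup_X h_\e$, $\sup_X|\de h_\e|_g$, and a uniform lower bound on $\nabla^2 h_\e$. Applying Theorem \ref{main estimate} to each $u_\e$ with these uniform data therefore produces the claimed $\e$-independent $C^{1,1}$ estimate.

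The uniform $W^{2,\infty}$ bound makes $\{u_\e\}$ precompact in $C^{1,\alpha}(X)$ for every $\alpha\in[0,1)$ by Arzel\`a-Ascoli, so along a subsequence $u_{\e_j}\to u$ in $C^{1,\alpha}(X)$; the Hessian bound passes to the limit by weak-$*$ lower semi-continuity of the $L^\infty$ norm, giving $u\in C^{1,1}(X)$ with the same bound. Consequently the second derivatives of $u$ exist almost everywhere.

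The last and most delicate step is to show that $u$ satisfies $f^*(\lambda(u))=h$ almost everywhere, and this is where I expect the main difficulty to lie: weak-$*$ convergence of Hessians does not directly commute with the nonlinear, concave, possibly degenerate operator $f$. The cleanest route is viscosity-theoretic. Each $u_{\e_j}$ is a classical, hence viscosity, solution of $f(\lambda(u_{\e_j}))=h_{\e_j}$, and the standard stability theorem for viscosity solutions combined with the uniform convergences $u_{\e_j}\to u$ and $h_{\e_j}\to h$ shows that $u$ is a viscosity solution of the limiting equation $f^*(\lambda(u))=h$ on $\ov{\Gamma}$. At any point $x_0$ at which $u$ is twice differentiable, testing the viscosity sub- and supersolution inequalities against the second-order Taylor polynomial of $u$ at $x_0$ collapses them to the pointwise identity $f^*(\lambda(u))(x_0)=h(x_0)$. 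The use of the upper semi-continuous envelope $f^*$ is exactly what accommodates the possibility that $\lambda(u)(x_0)$ lies on $\de\ov{\Gamma}$ on part of $X$, and it is precisely the monotonicity of $f$ in $\Gamma$-directions (assumption (ii)) that makes the viscosity stability argument go through.
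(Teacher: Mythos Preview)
Your proof is correct and follows essentially the same route as the paper: invoke Proposition~\ref{sigma 0} to make $\underline{u}$ a $\mathcal{C}$-subsolution of each $F(A)=h_\e$, apply Theorem~\ref{main estimate} with the uniform $C^2$ control on $h_\e$, and extract a $C^{1,1}$ limit via Arzel\`a--Ascoli. The paper's own proof is in fact even more terse than yours---it simply asserts that ``the existence of $u$ follows then immediately from the Arzel\`a--Ascoli theorem'' and does not spell out why the limit solves $f^*(\lambda(u))=h$; your viscosity-stability paragraph is additional justification that the paper omits.
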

\begin{proof}
By Proposition \ref{sigma 0}, we have that $\underline{u}$ is a $\mathcal{C}$-subsolution to $F(A(u_\e)) = h_\e(x)$ for all $\e>0$ sufficiently small. Thus, by Theorem \ref{main estimate} and the $C^{2}$ convergence of the $h_\e$, we have the required {\em a priori} estimate on the $u_\e$. The existence of $u$ follows then immediately from the Arzel\`a-Ascoli theorem.
\end{proof}

Before we prove Corollary \ref{hessian}, we briefly recall the necessary definitions. Fix an integer $m$ with $1\leq m\leq n$. Let $U$ be a domain in $\mathbb{C}^{n}$ and $\omega$ be a Hermitian metric on $\Omega$. Let $\Gamma_{m}(U)$ be the set of all $m$-positive smooth $(1,1)$-forms on $\Omega$ with respect to $\omega$ (we will always measure positivity using the fixed reference form $\omega$).

\begin{definition}
(\cite[Definition 2.10]{Lu13})  Suppose that $\theta\in\Gamma_m(U)$. We say an upper semi-continuous function $v:U\rightarrow[-\infty,+\infty)$ is $(\theta, m)$-subharmonic on $U$ if $v\in L_{\mathrm{loc}}^{1}(U)$ and
\begin{enumerate}
\item  $v + \rho$ is $\omega$-subharmonic for $\rho$ solving $i\p\pbar\rho\wedge\omega^{n-1} = \theta\wedge\omega^{n-1}$ and
\item for any $\gamma_{1},\ldots,\gamma_{m-1}\in\Gamma_{m}(U)$:
\[
(\theta+\ddbar v)\wedge\gamma_{1}\wedge\cdots\wedge\gamma_{m-1}\wedge\omega^{n-m} \geq 0
\]
in the sense of distributions.
\end{enumerate}
We write $\mSH(U,\theta)$ be the set of all $(\theta, m)$-subharmonic functions on $U$.
\end{definition}

For a compact Herimtian manifold $(X,\omega)$, the definition is similar. Let $A^{1,1}(X)$ be the space of smooth real $(1,1)$-forms on $X$. For any $\alpha\in A^{1,1}(X)$, write
\[
\sigma_{m}(\alpha)=\left(
\begin{matrix}
n\\
m
\end{matrix}
\right)
\frac{\alpha^{m}\wedge\omega^{n-m}}{\omega^{n}}.
\]
Then we define
\[
\Gamma_{m}(X,\omega)=\{ \theta\in A^{1,1}(X)~|~ \text{$\sigma_{l}(\theta)>0$ for $l=1,2,\ldots,m$} \},
\]
to be the cone of (strictly) $m$-positive forms on $X$. As $\omega$ is fixed, we will often write $\Gamma_{m}(X)$ instead of $\Gamma_{m}(X,\omega)$.

\begin{definition}\label{Def mSH}
Let $(X,\omega)$ be a compact Hermitian manifold and $\theta\in\Gamma_{m}(X)$. We say an upper semi-continuous function $v:X\rightarrow[-\infty,+\infty)$ is $(\theta, m)$-subharmonic on $X$ if  $v\in L^{1}(X)$ and $v|_{U}\in\mSH(U,\theta)$ for any local coordinate system $U$. Write $\mSH(X,\theta)$ for the set of all $(\theta,m)$-subharmonic functions on $X$.
\end{definition}

\begin{corollary}
(Corollary \ref{hessian}) Let $(X,\omega)$ be a compact Hermitian manifold and $\theta\in \Gamma_m(X)$ for some $1 \leq m \leq n$. Then for any non-negative function $h$ on $X$ such that $\int_{X}h\omega^n>0$ and $h^{\frac{1}{m}}\in C^{2}(X)$, there exists a pair $(u,c)\in C^{1,1}(X)\times\mathbb{R}_{+}$ such that
\[
\begin{cases}
(\theta+\ddbar u)^{m}\wedge \omega^{n-m} = ch\omega^{n}, \\
u\in\mSH(X,\theta),  \quad \sup_{X}u = -1.
\end{cases}
\]
\end{corollary}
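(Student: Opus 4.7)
The plan is to regularize the degenerate right-hand side, solve the approximating non-degenerate complex Hessian equations, and invoke Theorem \ref{degenerate estimate} to pass to the limit. Set $\psi := h^{1/m} \in C^2(X)$, and for $\varepsilon > 0$ define $h_\varepsilon := (\psi + \varepsilon)^m$, so that $h_\varepsilon > 0$ everywhere and $h_\varepsilon^{1/m} = \psi + \varepsilon \to h^{1/m}$ in $C^2(X)$ as $\varepsilon \to 0$. By the existence theory for the non-degenerate complex Hessian equation on compact Hermitian manifolds (Sz\'ekelyhidi \cite{Szekelyhidi18}, Zhang \cite{Zhang17}), for each $\varepsilon > 0$ there is a smooth pair $(u_\varepsilon, c_\varepsilon) \in C^\infty(X) \times \mathbb{R}_+$ with
\[
(\theta + i\p\pbar u_\varepsilon)^m \wedge \omega^{n-m} = c_\varepsilon h_\varepsilon \omega^n, \quad u_\varepsilon \in \mSH(X,\theta), \quad \sup_X u_\varepsilon = -1.
\]

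To fit the framework of Theorem \ref{degenerate estimate}, take $\chi_0 := \theta$, $\Gamma := \Gamma_m$, and $f := \binom{n}{m}^{-1/m}\sigma_m^{1/m}$. The approximants then take the form \eqref{main eqn} with right-hand side $\tilde h_\varepsilon := (c_\varepsilon h_\varepsilon)^{1/m}$, while the degenerate limit is the equation with right-hand side $\tilde h := (ch)^{1/m}$ for some $c > 0$. Since $\theta \in \Gamma_m(X)$, the eigenvalue vector $\mu(x) = \lambda(\theta)(x)$ lies in $\Gamma_m = \tilde\Gamma$, and by Remark \ref{sigma eq 1} we have $f_{\infty,\min} \equiv +\infty$; hence $\underline{u} = 0$ is a $\mathcal{C}$-subsolution in the sense of Definition \ref{subsolution def 2} for every $\varepsilon$ and for the degenerate limit, with $\sigma_0 = 1$ chosen uniformly. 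Next, classical pluripotential-type $L^\infty$ estimates for the Hessian equation on compact Hermitian manifolds (Ko{\l}odziej-Dinew and subsequent Hermitian extensions), together with integration of the equation against $\omega^n$ and the hypothesis $\int_X h\omega^n > 0$, yield a uniform two-sided bound on $c_\varepsilon$ and a uniform $L^\infty$ bound on $u_\varepsilon$; in particular, the $C^2$ norms of $\tilde h_\varepsilon$ are uniformly bounded and $\sup_{\p\Gamma} f < \tilde h_\varepsilon < \sup_\Gamma f$ uniformly in $\varepsilon$.

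Theorem \ref{degenerate estimate} now gives a uniform bound $\sup_X |u_\varepsilon| + \sup_X |\p u_\varepsilon|_g + \sup_X |\nabla^2 u_\varepsilon|_g \leq C$. By Arzel\`a-Ascoli, extract a subsequence $u_{\varepsilon_k} \to u$ in $C^{1,\alpha}$ with $u \in C^{1,1}(X)$ and $c_{\varepsilon_k} \to c > 0$; the limit $u$ lies in $\mSH(X,\theta)$ and satisfies $\sup_X u = -1$. Since $u$ has bounded real Hessian, $(\theta + i\p\pbar u)^m \wedge \omega^{n-m}$ is well-defined as a Borel measure, and the equation passes to the limit in the weak sense, yielding $(\theta + i\p\pbar u)^m \wedge \omega^{n-m} = c h \omega^n$. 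The main obstacle is the uniform two-sided control of $c_\varepsilon$ together with the uniform $L^\infty$ estimate on $u_\varepsilon$, which rely on classical pluripotential theory rather than on Theorem \ref{degenerate estimate} itself; once these are in hand, the rest is a direct compactness argument.
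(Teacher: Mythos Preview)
Your overall strategy---regularize the right-hand side, solve the non-degenerate approximations, bound the constants $c_\varepsilon$, and invoke Theorem~\ref{degenerate estimate}---is exactly the paper's. Your regularization $h_\varepsilon=(h^{1/m}+\varepsilon)^m$ is in fact tidier than the paper's $h_i=h+i^{-1}$, since it makes the required $C^2$ convergence of $h_\varepsilon^{1/m}$ immediate.

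The one genuine weak point is your justification of the two-sided bound on $c_\varepsilon$. The phrase ``integration of the equation against $\omega^n$'' suggests the K\"ahler argument
\[
c_\varepsilon\int_X h_\varepsilon\,\omega^n \;=\; \int_X(\theta+\ddbar u_\varepsilon)^m\wedge\omega^{n-m}\;=\;\int_X\theta^m\wedge\omega^{n-m},
\]
but on a general Hermitian manifold the last equality fails, so this step does not go through as written. The paper handles this point explicitly (citing \cite[Lemma~3.13]{KN16}): the lower bound $c_\varepsilon\ge C^{-1}$ follows from the maximum principle at the minimum of $u_\varepsilon$; for the upper bound, weak $L^1$-compactness of sup-normalized $(\theta,m)$-subharmonic functions gives $\|u_\varepsilon\|_{L^1}\le C$, and then Maclaurin's inequality together with Stokes' formula yields
\[
\int_X (c_\varepsilon h_\varepsilon)^{1/m}\,\omega^n \;\le\; C\int_X\sigma_1(\theta+\ddbar u_\varepsilon)\,\omega^n \;\le\; C+C\|u_\varepsilon\|_{L^1}\;\le\; C,
\]
from which $c_\varepsilon\le C$ since $\int_X h_\varepsilon^{1/m}\omega^n\ge C^{-1}$. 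Replacing your vague appeal to ``pluripotential-type $L^\infty$ estimates'' and ``integration'' with this concrete argument (or a direct citation of \cite{KN16}) closes the gap; the rest of your proof is fine.
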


\begin{proof}
Write $h_i=h+i^{-1}$. Since it is well known that $\sigma_{m}^{\frac{1}{m}}$ falls under the frame work of \eqref{main eqn}, all we need to do to apply Theorem \ref{degenerate estimate} is show that there exist pairs $(u_i, c_i) \in C^\infty(X)\times\R_+$ solving:
\[
\begin{cases}
(\theta+\ddbar u_{i})^{m}\wedge \omega^{n-m} = c_{i}h_{i}\omega^{n}, \\
u_{i}\in\mSH(X,\theta),  \quad \sup_{X}u_{i} = -1.
\end{cases}
\]
and such that the $c_i$ are also uniformly bounded in $i$:
\[
\frac{1}{C} \leq c_i \leq C.
\]

The existence of such pair follows from \cite[Proposition 21]{Szekelyhidi18} or \cite[Theorem 1.1]{Zhang17}. The boundedness of the $c_i$ follows from \cite[Lemma 3.13]{KN16}. For the reader's convenience, we repeat the short proof here. The lower bound for the $c_i$ follows immediatly from applying the maximum principle at the minimum of $u_{i}$. Then, weak compactness of $\sup$-normalized $(\theta, m)$-subharmonic functions implies that the $L^1$-norm of the $u_i$ is also controlled:
\[
\|u_{i}\|_{L^{1}} \leq C.
\]
By Maclaurin's inequality and Stokes' formula, we have
\[
\begin{split}
\int_{X}(c_{i}h_{i})^{\frac{1}{m}}\omega^{n} \leq C\int_{X}\sigma_{1}(\theta+\ddbar u_{i})\omega^{n} \leq C+C\|u_{i}\|_{L^{1}} \leq C.
\end{split}
\]
Combining this with $\int_{X}h_{i}^{\frac{1}{m}}\omega^{n} \geq C^{-1}$, we obtain
\[
c_{i} \leq C,
\]
as desired. Thus, after possibly taking a subsequence, we get that $c_i \rightarrow c > 0$. Since the $(c_i h_i)^{1/m}$ are non-degenerate and converge to $(ch)^{1/m}$ in $C^{2}$, we can thus apply Theorem \ref{degenerate estimate} to conclude the existence of $u$.
\end{proof}


\section{$C^{1,1}$ regularity of $(m,\theta)$-envelopes}
In this section, we prove Theorem \ref{envelope}:
\begin{theorem}\label{envelope section}
(Theorem \ref{envelope}) Let $(X,\omega)$ be a compact Hermitian manifold and $\theta\in \Gamma_m(X)$. If $h\in C^{1,1}(X)$, we define the envelope:
\[
P_{m, \theta}(h) := \sup\{ v\in\mSH(X, \theta)~|~v\leq h \}.
\]
Then $P_{m, \theta}(h)\in C^{1,1}(X)$. In particular, $P_{m,\theta}(h)$ solves:
\begin{equation}\label{env equ}
(\theta + i\p\pbar P_{m,\theta}(h))^m\wedge\omega^{n-m} = \chi_{K} \theta_h^m\wedge\omega^{n-m},
\end{equation}
where $\theta_{h}=\theta+i\p\pbar h$ and $K = \{ P_{m,\theta}(h) = h\}$ is the contact set.
\end{theorem}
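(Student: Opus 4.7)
The plan is to approximate $P_{m,\theta}(h)$ by smooth solutions of a family of non-degenerate complex Hessian equations with $\beta$-dependent right-hand side of Berman type, then pass to the limit using an adaptation of Theorem \ref{main estimate} so that the real Hessian bound stays uniform in $\beta$. Concretely, mollify $h$ to a sequence $h_\e\in C^\infty(X)$ converging to $h$ in $C^{1,\alpha}$ with $\sup_\e\sup_X|\nabla^2 h_\e|_g<+\infty$. For each $\beta>1$ and each small $\e$, I would consider the non-degenerate equation
\[
(\theta+i\p\pbar u_{\beta,\e})^m\wedge\omega^{n-m} = e^{m\beta(u_{\beta,\e}-h_\e)}\omega^n,
\]
which is solvable by \cite[Proposition 21]{Szekelyhidi18} using $\underline{u}\equiv 0$ as a $\mathcal{C}$-subsolution (valid since $\theta\in\Gamma_m(X)$ and $f_{\infty,\min}\equiv+\infty$ for $f=\sigma_m^{1/m}$). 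Standard barrier and comparison arguments yield $\sup_X|u_{\beta,\e}|\leq C$, $u_{\beta,\e}\leq h_\e+C(\log\beta)/\beta$, and the usual Perron envelope property forces $u_{\beta,\e}\to P_{m,\theta}(h)$ uniformly as $\beta\to\infty$ and $\e\to 0$.

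The main step is a uniform bound $\sup_X|\nabla^2 u_{\beta,\e}|_g\leq C$ independent of $\beta,\e$. Setting $\ti h_{\beta,\e}:=e^{\beta(u_{\beta,\e}-h_\e)}$, we have $\ti h_{\beta,\e}\leq 1$ uniformly, but $|\de\ti h_{\beta,\e}|_g$ and $\nabla^2\ti h_{\beta,\e}$ both contain a factor $\beta$ coupled to derivatives of $u_{\beta,\e}$, so Theorem \ref{main estimate} cannot be applied as a black box. The plan is to replay the proof of Theorem \ref{real Hessian estimate} with this specific right-hand side, tracking where $\beta$ enters. When $\nabla_\alpha\nabla_\beta$ is applied to the equation, the new terms compared to the generic analysis in Section 4 have the schematic form
\[
m\beta\,\ti h_{\beta,\e}\,F^{i\ov i}\bigl(u_{\beta,\e}-h_\e\bigr)_{\alpha\beta}\quad\text{and}\quad m^2\beta^2\,\ti h_{\beta,\e}\,F^{i\ov i}(u_{\beta,\e}-h_\e)_\alpha(u_{\beta,\e}-h_\e)_\beta.
\]
Using the sign condition $u_{\beta,\e}-h_\e\leq 0$, the scalar $\beta(h_\e-u_{\beta,\e})\cdot\ti h_{\beta,\e}=\beta t\,e^{-\beta t}$ with $t:=h_\e-u_{\beta,\e}\geq 0$ is uniformly bounded on $[0,\infty)$; combined with the homogeneity identity $\sum_i F^{i\ov i}\chi_{i\ov i}=f(\lambda)=\ti h_{\beta,\e}$ and uniform bounds on $\nabla^2 h_\e$, these $\beta$-dependent contributions collapse to a bounded multiple of $\mathcal{F}$ plus good third-order quantities that can be absorbed by the terms $G_1,G_2,G_3$ already isolated in Section 4.

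With a uniform $C^{1,1}$ bound in hand, Arzelà-Ascoli gives a subsequence $u_{\beta,\e}\to u\in C^{1,1}(X)$, which must equal $P_{m,\theta}(h)$ by the uniform convergence. The equation \eqref{env equ} follows by passing to the weak limit: on the open set $X\setminus K=\{P_{m,\theta}(h)<h\}$, one has $u_{\beta,\e}-h_\e\leq -\delta$ eventually on any compact subset, so $e^{m\beta(u_{\beta,\e}-h_\e)}\to 0$ locally uniformly and $(\theta+i\p\pbar u)^m\wedge\omega^{n-m}=0$ there; on the contact set $K$ (which is closed), $u=h$ pointwise, and since both lie in $C^{1,1}$, Lebesgue differentiation at density-one points of $K$ gives $i\p\pbar u=i\p\pbar h$ a.e.\ on $K$, yielding $(\theta+i\p\pbar u)^m\wedge\omega^{n-m}=\theta_h^m\wedge\omega^{n-m}$ a.e.\ on $K$. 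Combining these two identities gives \eqref{env equ}. The hardest part will be the $\beta$-uniform real Hessian estimate itself: the Section 4 argument has to be redone delicately so that the new $\beta$-dependent contributions interact correctly with the bad third-order term $B$ and are absorbed into $G_1,G_2,G_3$ and $\mathcal{F}$, with the sign condition $u_{\beta,\e}\leq h_\e$ used essentially (and only once) to keep $\beta\,t\,e^{-\beta t}$ bounded so that no $\beta$-factor survives in the final estimate.
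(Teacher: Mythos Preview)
Your overall strategy---Berman's $\beta$-equation followed by a rerun of the Section~4 Hessian estimate---is exactly the paper's approach (with $\beta=1/\varepsilon$ and the equation written in logarithmic form). The convergence of $u_{\beta,\varepsilon}$ to $P_{m,\theta}(h)$ and the derivation of \eqref{env equ} on and off the contact set are also argued as you outline.

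The genuine gap is in your mechanism for the $\beta$-uniform Hessian estimate. The quantity $\beta t\,e^{-\beta t}$ is \emph{not} what appears when you differentiate the right-hand side. Writing the equation as $f(\lambda)=\widetilde h_{\beta,\varepsilon}:=e^{\beta(u_{\beta,\varepsilon}-h_\varepsilon)}=e^{-\beta t}$ and applying $\nabla_\alpha\nabla_\gamma$, the new terms are
\[
\beta\,\widetilde h_{\beta,\varepsilon}\,(u_{\beta,\varepsilon}-h_\varepsilon)_{\alpha\gamma}
\quad\text{and}\quad
\beta^2\,\widetilde h_{\beta,\varepsilon}\,(u_{\beta,\varepsilon}-h_\varepsilon)_\alpha(u_{\beta,\varepsilon}-h_\varepsilon)_\gamma,
\]
with no $F^{i\bar i}$ and, crucially, no factor of $t$. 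The coefficient you must control is $\beta\,\widetilde h_{\beta,\varepsilon}=\beta e^{-\beta t}$, which equals $\beta$ at points where $t\approx 0$ (i.e.\ near the contact set); the bound on $\beta t e^{-\beta t}$ is irrelevant here and cannot be converted into a bound on $\beta e^{-\beta t}$. Your homogeneity identity $\sum_i F^{i\bar i}\chi_{i\bar i}=\widetilde h_{\beta,\varepsilon}$ does not absorb this either.

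What actually makes the estimate $\beta$-uniform is a sign/balance argument, not a pointwise bound. All $\beta$-dependent contributions carry the common factor $\beta\,\widetilde h_{\beta,\varepsilon}$ (equivalently $1/\varepsilon$ in the paper's logarithmic formulation). From the $\lambda_1$-piece one gets the \emph{good} term
\[
\frac{\beta\,\widetilde h_{\beta,\varepsilon}\,(u-h)_{V_1V_1}}{\lambda_1}\ \geq\ \tfrac12\,\beta\,\widetilde h_{\beta,\varepsilon}
\]
once $\lambda_1\geq 2\sup_X|\nabla^2 h_\varepsilon|$, while the $\xi$- and $\eta$-pieces contribute bad terms of size at most $C\xi' L^2\cdot\beta\,\widetilde h_{\beta,\varepsilon}$ and $\eta'|\partial h_\varepsilon|^2\cdot\beta\,\widetilde h_{\beta,\varepsilon}$ respectively. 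The paper chooses slightly different $\xi,\eta$ (replacing $5L^2$ by $100n^2L^2$ and adding $4\sup_X|\partial h|^2$ to the $\eta$-denominator) so that
\[
\tfrac12\ -\ 24n^2L^2\xi'\ -\ \eta'|\partial h_\varepsilon|^2\ \geq\ 0,
\]
and the entire $\beta$-dependent block is nonnegative and can be dropped. After that, the rest of Section~4 runs unchanged, with Proposition~\ref{subsolution prop} replaced by the direct G{\aa}rding inequality $F^{i\bar i}(\theta_{i\bar i}-\chi_{i\bar i})\geq\kappa\mathcal F - m$ for a $\kappa$ depending only on $\theta,\omega$. So your plan is salvageable, but you need to replace the $\beta t e^{-\beta t}$ argument with this sign cancellation and the modified test functions $\xi,\eta$.
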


Theorem \ref{envelope section} will follow from some simple modifications to the estimates in Theorem \ref{main estimate}, as well as some standard results about $(\theta, m)$-subharmonic functions, which we recall for completeness.

\subsection{Some estimates}

Following the method of Berman \cite[Section 2.1]{Berman19}, we consider the following equation
\begin{equation}\label{envelope CHE 1}
\begin{cases}
\log\sigma_{m}(\theta+\ddbar u) = \frac{1}{\ve}(u-h), \\[1mm]
\chi = \theta+\ddbar u \in \Gamma_{m}(X),
\end{cases}
\end{equation}
where $\ve\in(0,1)$ and $h\in C^{\infty}(X)$. We derive some estimates for \eqref{envelope CHE 1} that are essentially the same as those in Theorem \ref{main estimate}.

\begin{lemma}\label{envelope estimates}
Let $u$ be a smooth solution of \eqref{envelope CHE 1}. Then there exists a constant $C$ depending only on $\|h\|_{C^{2}}$, $\theta$ and $(X,\omega)$ such that
\begin{enumerate}
\item $\sup_{X}(u-h)\leq C\ve$, $\inf_{X}u\geq-C$.
\item $\sup_X|\de u|_g+\sup_X|\de\dbar u|_g\leq C$.
\item $\sup_X|\nabla^2 u|_g\leq C$.
\end{enumerate}
\end{lemma}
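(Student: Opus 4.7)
The three estimates are proved by separate maximum principle arguments. The common mechanism is that, after differentiating equation \eqref{envelope CHE 1}, the RHS contributes terms of order $\ve^{-1}$ which at the relevant maximum have the right sign to absorb the standard ``bad" contributions once $\ve$ is sufficiently small.

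\textbf{Proof of (1).} At a maximizer $x_0$ of $u-h$, the inequality $\ddbar(u-h)(x_0)\leq 0$ forces $\chi=\theta+\ddbar u\leq \theta+\ddbar h$ at $x_0$ as Hermitian forms. Since $\chi\in\Gamma_m$, Maclaurin's inequality yields
\[
\sigma_m(\chi)(x_0)\leq C\bigl(\tr{g}{(\theta+\ddbar h)}(x_0)\bigr)^m\leq C,
\]
with $C=C(\|h\|_{C^2},\theta,(X,\omega))$. From the equation, $(u-h)(x_0)\leq \ve\log C$. Dually, at a minimizer of $u$ we have $\chi\geq\theta\in\Gamma_m$, and monotonicity of $\sigma_m$ on $\Gamma_m$ gives $\sigma_m(\chi)\geq\sigma_m(\theta)\geq c>0$, hence $u\geq h+\ve\log c\geq -C$, uniformly in $\ve\in(0,1)$.

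\textbf{Proof of (2).} We apply Sz\'ekelyhidi's $C^2$ argument (cf.~\cite[Prop.~13]{Szekelyhidi18} and Section~3 here) to the quantity $Q=\log\lambda_1(\chi)+\phi(|\de u|_g^2)+\psi(u)$. Twice-differentiating \eqref{envelope CHE 1} in the direction of the unit top eigenvector $V$ of $\chi$ gives
\[
F^{i\ov i}\chi_{i\ov i V\ov V}+F^{i\ov j,p\ov q}\chi_{i\ov j V}\chi_{p\ov q \ov V}=\frac{(u-h)_{V\ov V}}{\ve}.
\]
The concavity term is $\leq 0$, while $(u-h)_{V\ov V}=\chi_{V\ov V}+O(1)=\lambda_1(\chi)+O(1)$, so at the maximum of $Q$ the resulting $\ve^{-1}$ contribution is positive and, for $\ve$ small, dominates the standard bad third-order and torsion terms, yielding $\sup_X|\de\dbar u|_g\leq C\sup_X|\de u|_g^2+C$. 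The $C^1$ bound then follows from the Sz\'ekelyhidi blow-up argument \cite[\S 4]{Szekelyhidi18}, which uses only the $L^\infty$ and $\de\dbar u$ bounds just obtained (the rescaling trivializes the RHS of \eqref{envelope CHE 1} in the limit).

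\textbf{Proof of (3).} We rerun the proof of Theorem~\ref{real Hessian estimate} verbatim with $h$ replaced by $\ti h:=(u-h)/\ve$; only the contribution of $\ti h_{\alpha\beta}$ to \eqref{lem 1 eqn 4} needs revision. Using $\rho>0$, the identity $\sum_{\alpha,\beta}\rho_{\alpha\beta}u_{\alpha\beta}=|\nabla^2 u|_g^2+L\Delta u$, and the $\de\dbar u$ bound from (2), we obtain
\[
2\sum_{\alpha,\beta}\rho_{\alpha\beta}\ti h_{\alpha\beta}\geq \frac{2|\nabla^2 u|_g^2-CL}{\ve}.
\]
At the maximum point of $\hat Q$ we have $\lambda_1\geq L/C_1-C_2$, so $|\nabla^2 u|_g^2\geq\lambda_1^2\geq L^2/C$, producing a positive $L^2\ve^{-1}$ contribution that, for $\ve$ sufficiently small (depending only on the data in the lemma), dominates every term in the resulting maximum principle inequality and forces $\lambda_1\leq C$. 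The main obstacle is to chase the $\ve^{-1}$ terms through Steps (2) and (3) and confirm that this single helpful $\ve^{-1}$ contribution indeed strictly dominates every competing one (in particular the bad third-order term $B$ and the auxiliary contributions from the non-K\"ahler torsion $\Theta$), uniformly as $\ve\to 0$.
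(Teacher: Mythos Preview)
Your argument for (1) is correct and matches the paper. Your sketch for (2) is along the right lines, though the phrase ``dominates the standard bad third-order and torsion terms'' is misleading: those terms are handled by concavity and the critical-point equation $Q_i=0$, not by the size of the $\ve^{-1}$ contribution.

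For (3), however, there is a genuine gap. First, it is not true that only \eqref{lem 1 eqn 4} needs revision: the inequalities \eqref{lem 1 eqn 2} and \eqref{lem 1 eqn 3} also involve derivatives of the right-hand side, and replacing $h$ by $(u-h)/\ve$ produces $\ve^{-1}$ terms in each of them (for instance $\tfrac{2}{\ve}(|\de u|_g^2-|\de u|_g|\de h|_g)$ in \eqref{lem 1 eqn 2} and $\tfrac{1}{\ve}(\lambda_1-h_{V_1V_1})$ in \eqref{lem 1 eqn 3}). Second, and more seriously, your claim that the helpful $\ve^{-1}$ contribution ``dominates every term in the resulting maximum principle inequality, in particular the bad third-order term $B$'' cannot be right: $B=\sum_i F^{i\ov i}|u_{V_1V_1i}|^2/\lambda_1^2$ involves third derivatives of $u$, which are not a priori bounded by any of the data. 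No constant---however large---can absorb such a term. If you are truly rerunning Section~4 verbatim, then $B$ is already controlled by $G_1+G_2+G_3$ via Lemmas~\ref{bad 1 2} and \ref{bad 3}, and the $\ve^{-1}$ term plays no role there; your final sentence therefore contradicts your own plan.

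The paper's approach is different in spirit. Rather than exploit the size of $\ve^{-1}$, it \emph{modifies the auxiliary functions} $\xi$ and $\eta$ (replacing $5L^2$ by $100n^2L^2$ and inserting $4\sup_X|\de h|_g^2$) so that the \emph{sum} of all $\ve^{-1}$ contributions---coming from \eqref{lem 1 eqn 2}, \eqref{lem 1 eqn 3}, and \eqref{lem 1 eqn 4}---is nonnegative for \emph{every} $\ve\in(0,1)$. Once this is arranged, the analogue of Lemma~\ref{lem 2} holds with no $\ve$-dependence, and the third-order analysis of Section~4 goes through unchanged. At the very end, Proposition~\ref{subsolution prop} is replaced by a direct application of G{\aa}rding's inequality to $\theta-\kappa\omega\in\Gamma_m(X)$, which gives $F^{i\ov i}(\underline\chi_{i\ov i}-\chi_{i\ov i})\geq\kappa\mathcal F-m$ and allows one to close the argument without any smallness assumption on $\ve$.
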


\begin{proof}
For (1), let $x_0$ be the maximum point of $(u-h)$. At $x_0$, we have
\[
u-h \leq \ve\log\sigma_{m}(\theta+\ddbar h) \leq C\ve,
\]
which implies $\sup_X(u-h)\leq C\ve$. Let $y_0$ be the minimum point of $u$. At $y_0$, we have
\[
u-h \geq \ve\log\sigma_{m}(\theta) \geq -C.
\]
which implies $\inf_X u\geq-C$.

(2) and (3) can be proved by applying arguments similar to \cite[Theorem 2]{Szekelyhidi18} and Theorem \ref{main estimate}, respectively, with only minor modifications to deal with the dependency of the right hand side on the unknown function $u$. The modifications are standard adaptations from the Monge-Amp\`ere case \cite{Aubin76,Yau78}, and are well known -- for the reader's convenience, we give a sketch of them in the context of proving (3). In this setting,
\[
f = \log\sigma_m, \quad
\underline{u} = 0, \quad
\underline{\chi} = \chi_0 = \theta.
\]
Note that $\log\sigma_{m}$ satisfies all the requisite assumptions of Theorem \ref{main estimate}.

We define a similar maximum principle quantity:
\[
Q = \log\lambda_1(\nabla^{2}u)+\xi(|\rho|_g^2)+\eta(|\de u|_g^2)+e^{-A(u-B)}.
\]
where $B$ is a constant such that $\sup_{X}(u-B)\leq-1$. We also modify the definitions of $\xi$ and $\eta$ slightly:
\[
\xi(s) = -\frac{1}{3}\log\left(100n^{2}L^{2}-s\right), \quad L = \sup_{X}|\nabla^{2}u|_g+1,
\]
\[
\eta(s) = -\frac{1}{3}\log\left(1+4\sup_{X}|\de h|_g^2+\sup_{X}|\de u|_g^2-s\right).
\]
Let $x_0$ be the maximum point of $Q$, and $\hat{Q}$ be the corresponding perturbed quantity. Near $x_0$, we choose the same coordinates $\{x^{\alpha}\}_{\alpha=1}^{2n}$ and local frame $\{e_{i}\}_{i=1}^{n}$ as in the proof of Theorem \ref{real Hessian estimate}. Equation \eqref{envelope CHE 1} can be written as
\begin{equation}\label{envelope CHE 2}
F(A) = \frac{1}{\ve}(u-h), \quad A_j^i = g^{i\ov{p}}\chi_{j\ov{p}}.
\end{equation}
As mentioned, the only difference between \eqref{main eqn} and \eqref{envelope CHE 2} is the right hand side. To apply the maximum principle at $x_0$, we need to establish some inequalities. We begin by showing that the conclusion of Lemma \ref{lem 2} still holds for \eqref{envelope CHE 2}.

The following calculations are very similar to Lemma \ref{lem 1}. It is clear that
\begin{equation}\label{envelope eqn 1}
F^{i\ov{i}}u_{i\ov{i}} = -F^{i\ov{i}}(\underline{\chi}_{i\ov{i}}-\chi_{i\ov{i}})
-2F^{i\ov{i}}\mathrm{Re}\big(\Theta_{i\ov{i}}^ku_k\big).
\end{equation}
Applying $\nabla_{\ov{p}}$ to equation \eqref{envelope CHE 2},
\[
F^{i\ov{i}}\chi_{i\ov{i}\ov{p}} = \frac{1}{\ve}(u_{\ov{p}}-h_{\ov{p}}),
\]
which implies
\[
\begin{split}
2\mathrm{Re}\left(F^{i\ov{i}}u_{p}u_{i\ov{i}\ov{p}}\right)
\geq {} & -2\sum_{p}F^{i\ov{i}}\mathrm{Re}\left(\Theta_{i\ov{i}}^ku_{k\ov{p}}u_p+\ov{\Theta_{i\ov{i}}^k}u_{\ov{k}\ov{p}}u_p\right) \\
& +\frac{2}{\ve}(|\de u|_g^2-|\de u|_g|\de h|_g)-C\mathcal{F}.
\end{split}
\]
Combining this with the Cauchy-Schwarz inequality,
\begin{equation}\label{envelope eqn 2}
\begin{split}
F^{i\ov{i}}(|\de u|_g^2)_{i\ov{i}}
\geq {} & \sum_{p}F^{i\ov{i}}(|u_{ip}|^2+|u_{i\ov{p}}|^2)+2\mathrm{Re}\left(F^{i\ov{i}}u_{p}u_{i\ov{i}\ov{p}}\right)-C\mathcal{F} \\
\geq {} & \sum_{p}F^{i\ov{i}}(|u_{ip}|^2+|u_{i\ov{p}}|^2)-\frac{1}{\ve}|\de h|_g^2-C\mathcal{F} \\
& -2\sum_{p}F^{i\ov{i}}\mathrm{Re}\left(\Theta_{i\ov{i}}^ku_{k\ov{p}}u_p+\ov{\Theta_{i\ov{i}}^k}u_{\ov{k}\ov{p}}u_p\right).
\end{split}
\end{equation}
Applying $\nabla_{V_1}\nabla_{V_1}$ to \eqref{envelope CHE 2}, we get
\[
F^{i\ov{i}}\chi_{i\ov{i}V_{1}V_{1}}+F^{i\ov{j},p\ov{q}}\chi_{i\ov{j}V_{1}}\chi_{p\ov{q}V_{1}} = \frac{1}{\ve}(\lambda_1-h_{V_{1}V_{1}})
\geq \frac{1}{2\ve}\lambda_1,
\]
where we assume without loss of generality that $\lambda_1-h_{V_{1}V_{1}}\geq\frac{1}{2}\lambda_1$. Thus,
\[
F^{i\ov{i}}u_{i\ov{i}V_{1}V_{1}}
\geq -F^{i\ov{j},p\ov{q}}\chi_{i\ov{j}V_{1}}\chi_{p\ov{q}V_{1}}
-2F^{i\ov{i}}\mathrm{Re}\big(\Theta_{i\ov{i}}^ku_{V_1V_1k}\big)
+\frac{1}{2\ve}\lambda_1-C\lambda_1\mathcal{F},
\]
and so
\begin{equation}\label{envelope eqn 3}
\begin{split}
F^{i\ov{i}}(\lambda_1)_{i\ov{i}}
\geq {} & 2\sum_{\alpha>1}\frac{F^{i\ov{i}}|u_{V_{1}V_{\alpha}i}|^2}{\lambda_1-\lambda_\alpha}+F^{i\ov{i}}u_{i\ov{i}V_{1}V_{1}}-C\lambda_1\mathcal{F} \\
\geq {} & 2\sum_{\alpha>1}\frac{F^{i\ov{i}}|u_{V_{1}V_{\alpha}i}|^2}{\lambda_1-\lambda_\alpha}
-F^{i\ov{j},p\ov{q}}\chi_{i\ov{j}V_{1}}\chi_{p\ov{q}V_{1}} \\
&-2F^{i\ov{i}}\mathrm{Re}\big(\Theta_{i\ov{i}}^ku_{V_1V_1k}\big)+\frac{1}{2\ve}\lambda_1-C\lambda_1\mathcal{F}.
\end{split}
\end{equation}
Applying $\nabla_{\beta}\nabla_{\alpha}$ to \eqref{envelope CHE 2} gives,
\[
F^{i\ov{i}}\chi_{i\ov{i}\alpha\beta}+F^{i\ov{j},p\ov{q}}\chi_{i\ov{j}\alpha}\chi_{p\ov{q}\beta} = \frac{1}{\ve}(u_{\alpha\beta}-h_{\alpha\beta}),
\]
which implies
\[
\begin{split}
& 2\sum_{\alpha,\beta}F^{i\ov{i}}\rho_{\alpha\beta}u_{i\ov{i}\alpha\beta} \\
\geq {} & -2\sum_{\alpha,\beta}F^{i\ov{j},p\ov{q}}\rho_{\alpha\beta}\chi_{i\ov{j}\alpha}\chi_{p\ov{q}\beta}
-4\sum_{\alpha,\beta}F^{i\ov{i}}\rho_{\alpha\beta}\mathrm{Re}(\Theta_{i\ov{i}}^ku_{\alpha\beta k}) \\
& +\frac{2}{\ve}\sum_{\alpha,\beta}\rho_{\alpha\beta}(u_{\alpha\beta}-h_{\alpha\beta})-CL^{2}\mathcal{F} \\
\geq {} & -4\sum_{\alpha,\beta}F^{i\ov{i}}\rho_{\alpha\beta}\mathrm{Re}(\Theta_{i\ov{i}}^ku_{\alpha\beta k})
+\frac{2}{\ve}\sum_{\alpha,\beta}\rho_{\alpha\beta}(\rho_{\alpha\beta}-Lg_{\alpha\beta}-h_{\alpha\beta})-CL^{2}\mathcal{F} \\
\geq {} & -4\sum_{\alpha,\beta}F^{i\ov{i}}\rho_{\alpha\beta}\mathrm{Re}(\Theta_{i\ov{i}}^ku_{\alpha\beta k})-\frac{24n^2}{\ve}L^2-CL^{2}\mathcal{F},
\end{split}
\]
where we assume without loss of generality that $L\geq\sup_{X}|\nabla^2 h|_g$. It then follows that
\begin{equation}\label{envelope eqn 4}
\begin{split}
& F^{i\ov{i}}(|\rho|_{g}^{2})_{i\ov{i}} \\[2mm]
\geq {} & 2\sum_{\alpha,\beta}F^{i\ov{i}}|u_{i\alpha\beta}|^{2}
+2\sum_{\alpha,\beta}F^{i\ov{i}}\rho_{\alpha\beta}u_{i\ov{i}\alpha\beta}-CL^{2}\mathcal{F} \\
\geq {} & 2\sum_{\alpha,\beta}F^{i\ov{i}}|u_{i\alpha\beta}|^{2}-4F^{i\ov{i}}\rho_{\alpha\beta}\mathrm{Re}(\Theta_{i\ov{i}}^ku_{\alpha\beta k})
-\frac{24n^2}{\ve} L^2-CL^{2}\mathcal{F}.
\end{split}
\end{equation}

We can now combine \eqref{envelope eqn 1}, \eqref{envelope eqn 2}, \eqref{envelope eqn 3}, and \eqref{envelope eqn 4} and apply the same argument as in Lemma \ref{lem 2} to obtain:
\[
0 \geq \text{(right hand side of \eqref{lem 2 eqn 1})}+\frac{1}{\ve}\left(\frac{1}{2}-24n^2L^2\xi'-\eta'|\de h|_g^2\right).
\]
But by the definition of $\xi$ and $\eta$, we have
\[
\xi' \leq \frac{1}{100n^2L^2}, \quad \eta' \leq \frac{1}{4|\de h|_g^2+1},
\]
so
\[
\frac{1}{\ve}\left(\frac{1}{2}-n^2L^2\xi'-\eta'|\de h|_g^2\right) \geq 0.
\]
Hence, the conclusion of Lemma \ref{lem 2} still applies for equation \eqref{envelope CHE 2}.

We can now continue the argument as in Section 4 until we get to equation \eqref{proof eqn 1} -- in our new setting, this becomes the slightly modified inequality:
\[
\begin{split}
0 \geq {} & \frac{1}{C_0}\sum_{p}F^{i\ov{i}}\left(|u_{ip}|^2+|u_{i\ov{p}}|^2\right)
+Ae^{-A(u-B)}F^{i\ov{i}}(\underline{\chi}_{i\ov{i}}-\chi_{i\ov{i}}) \\
& +(A^2e^{-A(u-B)}-6\ve' A^{2}e^{-2A(u-B)})F^{i\ov{i}}|u_i|^2-\frac{C_0\mathcal{F}}{\ve'}-C_{0},
\end{split}
\]
where we have replaced the $\e$ in \eqref{proof eqn 1} with $\e'$, to avoid confusion with the $\e$ already being used in this section. We now need to apply Proposition \ref{subsolution prop} to finish. Since the right hand side of the equation \eqref{envelope CHE 2} depends on $\ve$, we cannot apply Proposition \ref{subsolution prop} directly, as we need the real Hessian estimate to be independent of $\ve$. Instead, note that there exists $\kappa>0$ depending only on $(X,\omega)$ and $\theta$ such that
\[
\theta-\kappa\omega \in \Gamma_{m}(X).
\]
By $\underline{\chi}=\theta$ and G{\aa}rding's inequality,
\[
F^{i\ov{i}}(\underline{\chi}_{i\ov{i}}-\chi_{i\ov{i}})
= F^{i\ov{i}}\theta_{i\ov{i}}-m
= F^{i\ov{i}}(\theta_{i\ov{i}}-\kappa g_{i\ov{i}}+\kappa g_{i\ov{i}})-m
\geq \kappa\mathcal{F}-m.
\]
Choosing then
\[
A = \frac{6C_0+1}{\kappa}, \quad \ve' = \frac{e^{A(u(x_0)-B)}}{6},
\]
it follows that
\[
\sum_{p}F^{i\ov{i}}\left(|u_{ip}|^2+|u_{i\ov{p}}|^2\right)+\mathcal{F} \leq C.
\]
Thanks to \cite[Lemma 2.2 (2)]{HMW10}, $\mathcal{F}\leq C$ implies $F^{i\ov{i}} \geq \frac{1}{C}$. We thus get:
\[
\sum_{p}\left(|u_{ip}|^2+|u_{i\ov{p}}|^2\right) \leq C,
\]
which shows that $\lambda_1\leq C$, as required.
\end{proof}

\subsection{$(\theta, m)$-subharmonic functions}

We now recall some basic facts about $(\theta, m)$-subharmonic functions. In what follows, we will write:
\[
P_\theta(h) := P_{m, \theta}(h),
\]
for simplicity.

\begin{lemma}\label{viscosity}
Let $v\in\mSH(X,\theta)$ and $p\in X$. Suppose that $U$ is a coordinate system centered at $p$. If $\vp\in C^{2}(U)$ satisfies
\[
\vp(p) = v(p), \quad \vp \geq v \ \text{on $U$},
\]
then for any $\gamma_{1},\ldots,\gamma_{m-1}\in\Gamma_{m}(U)$,
\[
(\theta+\ddbar \vp)\wedge\gamma_{1}\wedge\cdots\wedge\gamma_{m-1}\wedge\omega^{n-m} \geq 0 \ \ \text{at $p$}.
\]
In particular, $\sigma_{m}(\theta+\ddbar\vp) \geq 0$ at $p$.
\end{lemma}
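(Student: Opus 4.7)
My plan is to argue by contradiction using a strong maximum principle for the linear elliptic operator naturally associated to the test forms. To a fixed tuple $\gamma_1,\ldots,\gamma_{m-1}\in\Gamma_m(U)$ attach the second-order operator
\[
Lw := \frac{i\p\pbar w \wedge \gamma_1\wedge\cdots\wedge\gamma_{m-1}\wedge\omega^{n-m}}{\omega^n/n!},\qquad b := \frac{\theta\wedge\gamma_1\wedge\cdots\wedge\gamma_{m-1}\wedge\omega^{n-m}}{\omega^n/n!}.
\]
A polarisation of $\sigma_m$ via G{\aa}rding's inequality shows that in local coordinates $L = a^{i\ov j}(x)\,\p_i\p_{\ov j}$ with $(a^{i\ov j})$ positive-definite Hermitian, so $L$ is strictly elliptic with smooth coefficients; in these terms the definition of $v\in\mSH(U,\theta)$ is exactly $Lv+b\ge 0$ in $\mathcal{D}'(U)$. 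Suppose for contradiction that, for some admissible tuple, $L\varphi(p)+b(p)\le -c_0<0$.

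The first step would be to upgrade the touching at $p$ to a strict one. Replacing $\varphi$ by $\tilde\varphi(x):=\varphi(x)+N^{-1}|x|^2$ (with $p$ the origin of the chart and $N$ large), we obtain $\Phi:=\tilde\varphi-v\ge N^{-1}|x|^2$ throughout $U$, while by continuity $L\tilde\varphi+b\le -c_0/2$ on some small coordinate ball $B_r(p)$. Upper semi-continuity of $v$ on the compact set $\p B_r(p)$ then gives $\Phi\ge \eta>0$ there for some constant $\eta$.

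The second step converts the distributional inequality into a pointwise one via regularization. Since $v+\rho$ is $\omega$-subharmonic and $\rho$ is smooth, standard subharmonic-function theory provides smooth coordinate mollifications $v_\delta$ of $v$ with $v_\delta\to v$ pointwise as $\delta\searrow 0$; moreover, by monotone convergence plus the USC of $v$, for every $\e>0$ one has $v_\delta\le v+\e$ uniformly on any compact set once $\delta$ is sufficiently small. A Friedrichs-type commutator estimate, valid because the coefficients of $L$ and $b$ are smooth, then yields $Lv_\delta+b\ge -\e(\delta)$ on a slightly smaller ball $B'\Subset B_r(p)$ with $\e(\delta)\to 0$. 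Combining with the preceding step, the smooth function $\Phi_\delta:=\tilde\varphi-v_\delta$ satisfies $L\Phi_\delta\le -c_0/4$ on $B'$ for all sufficiently small $\delta$.

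The final step is the classical minimum principle. The semi-uniform convergence $v_\delta\to v$ on $\p B'$ gives $\Phi_\delta\ge \eta/2>0$ on $\p B'$ for all small $\delta$, while $\Phi_\delta(p)\to 0$, so the minimum of $\Phi_\delta$ on $\overline{B'}$ is attained at an interior point $q_\delta$, at which $i\p\pbar \Phi_\delta(q_\delta)\ge 0$ as a Hermitian matrix and hence $L\Phi_\delta(q_\delta)\ge 0$, contradicting $L\Phi_\delta\le -c_0/4<0$. The ``in particular'' assertion about $\sigma_m$ follows because a real $(1,1)$-form $\alpha$ lies in $\overline{\Gamma_m}$ if and only if $\alpha\wedge\gamma_1\wedge\cdots\wedge\gamma_{m-1}\wedge\omega^{n-m}\ge 0$ for every $\gamma_j\in\Gamma_m(U)$, and membership in $\overline{\Gamma_m}$ forces $\sigma_m(\alpha)\ge 0$. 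I expect the main obstacle to be rigorously justifying the commutator/regularization step, as a na\"ive mollification of $Lv$ with variable coefficients carries an error that one must verify actually vanishes as $\delta\to 0$; the smoothness of $\theta$, $\omega$, and the freely chosen $\gamma_j$ is essential here.
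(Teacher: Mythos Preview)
Your strategy is different from the paper's, and the step you yourself flag as the main obstacle is a genuine gap. The paper avoids mollification entirely: since $\gamma_1\wedge\cdots\wedge\gamma_{m-1}\wedge\omega^{n-m}$ is a strictly positive $(n-1,n-1)$-form, Michelsohn's result produces a Hermitian metric $\beta$ on $U$ with $\beta^{n-1}=\gamma_1\wedge\cdots\wedge\gamma_{m-1}\wedge\omega^{n-m}$. After solving the linear equation $\theta\wedge\beta^{n-1}=\ddbar f\wedge\beta^{n-1}$ for a smooth local potential $f$, the hypothesis becomes simply that $f+v$ is $\beta$-subharmonic in the distributional sense, and $f+\vp$ touches it from above at $p$. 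The conclusion $\ddbar(f+\vp)\wedge\beta^{n-1}\ge 0$ at $p$ is then read off from Harvey--Lawson \cite{HL13}, which establishes the equivalence of distributional and viscosity subsolutions for exactly this kind of linear convex subequation. The ``in particular'' follows, as you say, from G{\aa}rding.

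Your route tries to reprove the Harvey--Lawson step by hand via mollification, and the commutator $[L,\,\ast\psi_\delta]v$ is where it stalls. The problem is that $L$ is second order with variable coefficients, while the only a priori regularity on $v$ is that $v+\rho$ is $\omega$-subharmonic; this gives $\nabla v\in L^1_{\mathrm{loc}}$ but says nothing useful about the individual $v_{i\ov{j}}$, which for $m<n$ need not even be measures. A direct estimate of $\int[a^{i\ov{j}}(x)-a^{i\ov{j}}(y)]v_{i\ov{j}}(y)\psi_\delta(x-y)\,dy$ therefore has no obvious meaning, and after one integration by parts you are left with a term $\p_i\{[a^{i\ov{j}},\ast\psi_\delta]v_{\ov{j}}\}$ whose $L^1$-smallness does not follow from the first-order Friedrichs lemma alone (you would need smallness in $W^{1,1}$, not just $L^1$). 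There is also a secondary issue: the ``uniform'' upper bound $v_\delta\le v+\e$ you invoke is a property of \emph{decreasing} approximations, and coordinate mollification of an $\omega$-subharmonic function is not automatically decreasing when $\omega$ is not flat. Both issues can likely be repaired (e.g.\ by passing to sup-convolutions, or by adding a large enough strictly plurisubharmonic weight to force Euclidean subharmonicity before mollifying), but as written the argument is incomplete precisely where you anticipated. The paper's reduction to a single metric $\beta$ and appeal to \cite{HL13} sidesteps all of this.
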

\begin{proof}
We follow the argument of \cite[Lemma 7]{Plis13}. Since $\gamma_{1},\ldots,\gamma_{k-1}\in\Gamma_{m}(U)$, we have that
\[
\gamma_{1}\wedge\cdots\wedge\gamma_{m-1}\wedge\omega^{n-m}
\]
is a positive $(n-1,n-1)$-form. By \cite[(4.8)]{Michelsohn82}, there exists a Hermitian metric $\beta$ on $U$ such that
\[
\beta^{n-1} = \gamma_{1}\wedge\cdots\wedge\gamma_{m-1}\wedge\omega^{n-m}.
\]
After shrinking $U$, we can find a bounded $f\in C^{\infty}(U)$ solving the linear equation
\[
\theta\wedge\beta^{n-1} = \ddbar f\wedge\beta^{n-1}.
\]
Combining this with $v\in \mSH(X,\theta)$, we obtain
\[
\ddbar(f+v)\wedge\beta^{n-1} = (\theta+\ddbar v)\wedge\gamma_{1}\wedge\cdots\wedge\gamma_{m-1}\wedge\omega^{n-m} \geq 0 \ \ \text{on $U$}
\]
in the sense of distributions. Clearly, $f+\vp\in C^{2}(U)$ satisfies
\[
(f+\vp)(p) = (f+v)(p), \quad f+\vp \geq f+v \ \text{on $U$}.
\]
By \cite[Theorem 9.2 and 9.3]{HL13}, we have
\[
\ddbar(f+\vp)\wedge\beta^{n-1} \geq 0 \ \ \text{at $p$}.
\]
It then follows that
\[
(\theta+\ddbar \vp)\wedge\gamma_{1}\wedge\cdots\wedge\gamma_{m-1}\wedge\omega^{n-m} \geq 0 \ \ \text{at $p$},
\]
as desired.

The last claim, that $\sigma_m(\theta + \ddbar\vp)\geq 0$ at $p$, nows follows from a simple application of G\aa rding's inequality.
\end{proof}

Let $\e \in (0,1)$. By Lemma \ref{envelope estimates} and the continuity method in \cite{Szekelyhidi18}, there exists a smooth function $u_{\ve}\in\mSH(X, \theta)$ solving \eqref{envelope CHE 1}:
\begin{equation}
\begin{cases}
\log\sigma_{m}(\theta+\ddbar u_{\ve}) = \frac{1}{\ve}(u_{\ve}-h), \\[1mm]
\theta+\ddbar u \in \Gamma_{m}(X).
\end{cases}
\end{equation}

\begin{lemma}\label{envelope lem}
Let $0 < \delta < 1$ and $\theta\in\Gamma_m(X)$. Then for any $v\in\mSH(X,(1-\delta)\theta)$ such that $v\leq h$, we have
\[
v \leq u_{\ve}+\delta \ \  \text{on $X$},
\]
for all $\e > 0$ sufficiently small. In particular, $P_{(1-\delta)\theta}(h)\leq u_{\ve}+\delta$.
\end{lemma}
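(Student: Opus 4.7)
The plan is to reduce the inequality $v \leq u_\e + \delta$ to a pointwise estimate at a maximum point of $v - u_\e$, by combining Lemma \ref{viscosity} with G{\aa}rding's inequality. If $v \equiv -\infty$ the statement is trivial, so I assume $v \not\equiv -\infty$; since $v$ is upper semicontinuous and $u_\e$ is smooth on the compact manifold $X$, I pick a point $p \in X$ where $v - u_\e$ attains its (finite) maximum. Setting $\vp := u_\e + (v - u_\e)(p)$, the function $\vp \in C^\infty(X)$ satisfies $\vp(p) = v(p)$ and $\vp \geq v$ on $X$; thus $\vp$ serves as a smooth test function touching $v$ from above at $p$.

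Applying Lemma \ref{viscosity} to $v \in \mSH(X, (1-\delta)\theta)$ (with $(1-\delta)\theta$ in place of $\theta$), and using $\ddbar\vp = \ddbar u_\e$, I obtain
\[
\bigl((1-\delta)\theta + \ddbar u_\e\bigr) \wedge \gamma_1 \wedge \cdots \wedge \gamma_{m-1} \wedge \omega^{n-m} \geq 0 \quad \text{at } p
\]
for any $\gamma_1, \ldots, \gamma_{m-1} \in \Gamma_m$ near $p$; in particular, the form $\alpha := (1-\delta)\theta + \ddbar u_\e$ lies in $\overline{\Gamma_m}$ at $p$. Writing $\theta + \ddbar u_\e = \alpha + \delta\theta$ with $\delta\theta \in \Gamma_m(X)$, the super-additivity of $\sigma_m^{1/m}$ on $\overline{\Gamma_m}$ (a form of G{\aa}rding's inequality) gives
\[
\sigma_m(\theta + \ddbar u_\e)^{1/m}(p) \geq \sigma_m(\alpha)^{1/m}(p) + \delta\, \sigma_m(\theta)^{1/m}(p) \geq c_0\, \delta,
\]
where $c_0 := \min_X \sigma_m(\theta)^{1/m} > 0$ depends only on $\theta$ and $(X,\omega)$.

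Combining this lower bound with equation \eqref{envelope CHE 1} at $p$ and the hypothesis $v \leq h$,
\[
\max_X(v - u_\e) = v(p) - u_\e(p) \leq h(p) - u_\e(p) = -\e\, \log\sigma_m(\theta + \ddbar u_\e)(p) \leq \e\, m\, \bigl|\log(c_0\delta)\bigr|,
\]
where I take $c_0\delta < 1$ without loss of generality (otherwise the bound is even better). Choosing $\e < \delta/(m|\log(c_0\delta)|)$ then yields $v \leq u_\e + \delta$ on $X$, and taking the supremum over all admissible $v$ gives $P_{(1-\delta)\theta}(h) \leq u_\e + \delta$. The only subtle point is the viscosity step: since $v$ is only upper semicontinuous, one cannot differentiate it directly, and the trick is to use the vertical shift of the smooth solution $u_\e$ itself as the touching test function, which is precisely the configuration Lemma \ref{viscosity} is built to exploit. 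After that, the proof is a short computation using G{\aa}rding and the defining equation for $u_\e$.
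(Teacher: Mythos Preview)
Your proof is correct and follows essentially the same approach as the paper: both pick the extremal point of $v-u_\e$, use the shifted solution $u_\e+\mathrm{const}$ as the test function in Lemma \ref{viscosity} to place $(1-\delta)\theta+\ddbar u_\e$ in $\overline{\Gamma_m}$ at that point, and then combine G{\aa}rding's inequality with the defining equation \eqref{envelope CHE 1} and the bound $v\le h$ to force $\max_X(v-u_\e)\le \delta$ for $\e$ small. Your write-up is slightly more explicit about the G{\aa}rding step, but there is no substantive difference.
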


\begin{proof}
The argument is extracted from \cite[Theorem 3.2]{LN15}. Let $p$ be the minimum point of the lower semi-continuous function $u_{\ve}-v$, and set
\[
B = (u_{\ve} - v)(p) = \min_{X}(u_{\ve} - v).
\]
It suffices to prove $B\geq -\delta$. Pick a coordinate system $U\subset X$ centered at $p$, and set
\[
\vp = u_{\ve} - B \in C^\infty(X).
\]
It is clear that
\[
\vp(p) = v(p), \quad \vp \geq v \ \ \text{on $U$}.
\]
By Lemma \ref{viscosity}, we have
\[
\sigma_{m}((1-\delta)\theta+\ddbar\vp) \geq 0 \ \ \text{at $p$},
\]
which implies
\[
\sigma_{m}(\theta+\ddbar u_\e) \geq \delta^{m}\sigma_{m}(\theta) > 0 \ \ \text{at $p$}.
\]
Using $v\leq h$ and \eqref{envelope CHE 2}, we get:
\[
v(p) - \delta \leq  h(p) + \e\log(\delta^m\sigma_m(\theta)) \leq u_\e(p)
\]
as long as $\e$ is sufficiently small, depending only on $\theta$ and $\delta$. Rearranging yields $B\geq -\delta$, as desired.
\end{proof}

\subsection{Proof of Theorem \ref{envelope section}}
We now prove Theorem \ref{envelope section}:

\begin{proof}[Proof of Theorem \ref{envelope section}]
We begin by showing that it suffices to prove the theorem for $h\in C^\infty(X)$. To see this, suppose that $h\in C^{1,1}(X)$ and pick a sequence of smooth functions $h_i$ such that
\[
\lim_{i\rightarrow\infty}\|h_{i}-h\|_{C^0} = 0, \quad
\|h_{i}\|_{C^{2}} \leq C,
\]
where $C$ is independent of $i$. By the definition of the envelope, we have
\[
\|P_\theta(h_{i})-P_\theta(h)\|_{L^{\infty}} \leq \|h_{i}-h\|_{C^{0}},
\]
which implies
\[
\lim_{i\rightarrow\infty}\|P_\theta(h_{i})-P_\theta(h)\|_{L^{\infty}} = 0.
\]
Thus, the theorem will follow if we can prove that $\|P_\theta(h_i)\|_{C^2} \leq C$ for the smooth functions $h_i$.

Suppose then that $h\in C^\infty(X)$. As already noted, by Lemma \ref{envelope estimates} and the continuity method in \cite{Szekelyhidi18}, there exists smooth functions $u_{\ve}\in\mSH(X,\theta)$ solving
\[
\log\sigma_{m}(\theta+\ddbar u_{\ve}) = \frac{1}{\ve}(u_{\ve}-h),
\]
for any $\e \in (0,1)$. We claim
\begin{equation}\label{envelope claim}
\lim_{\ve\rightarrow0}\|u_{\ve}-P_\theta(h)\|_{C^0} = 0.
\end{equation}
Given this, the theorem follows immediately from Lemma \ref{envelope estimates} and the Arzel\`a-Ascoli theorem.

To prove \eqref{envelope claim}, by (1) of Lemma \ref{envelope estimates}, we have
\[
u_{\ve} - C\ve \leq h.
\]
Since $u_{\ve}-C\ve\in\mSH(X, \theta)$ also, we thus have:
\begin{equation}\label{envelope eqn 5}
u_{\ve}-C\ve \leq P_\theta(h).
\end{equation}
Combining this with Lemma \ref{envelope lem} now gives the two-sided bound:
\[
P_{(1-\delta)\theta}(h) - \delta \leq u_\e \leq P_\theta(h) + C\e,
\]
for all $0 < \e$ sufficiently small. By (2) of Lemma \ref{envelope estimates}, any subsequence $\e_i \rightarrow 0$ has a further subsequence (which we will also denote by $\e_i$), such that $u_{\e_i} \rightarrow u_0\in C^0(X)$ uniformly as $i\rightarrow\infty$. Taking $i\rightarrow\infty$ in the above bound gives:
\[
P_{(1-\delta)\theta}(h) - \delta \leq u_0 \leq P_\theta(h),
\]
But it is clear from the definition that $\lim_{\delta\rightarrow 0} P_{(1-\delta)\theta}(h) = P_\theta(h)$, so we can now let $\delta\rightarrow 0$ to get:
\[
u_0 = P_\theta(h).
\]
Thus, any $C^0$-convergent subsequence of $\{u_\e\}$ converges to $P_\theta(h)$, so we conclude that $u_\e\xrightarrow{C^0}P_\theta(h)$, as claimed.

\bigskip

Finally, we show \eqref{env equ} -- the proof is the same as in the Monge-Amp\`ere case, but we reproduce the details for the readers convenience (see \cite{Tosatti18}). First, we show that $(\theta + i\p\pbar P(h))^m\wedge\omega^{n-m}$ vanishes outside the contact set $K := \{P_\theta(h) = h\}$ -- this is classical when $X$ is a K\"ahler manifold, and the same proof can be adapted to the Hermitian case \cite[Theorem 4.1]{GN18}. Here however, we give a different proof, utilizing the estimates we have already show for the $u_\e$. Note that, by equation \eqref{envelope CHE 2}, we have that $(\theta + i\p\pbar u_\e)^m\wedge\omega^{n-m} \rightarrow 0$ on the open set $X\setminus K$. Thus, we only need to show that the measures $(\theta+i\p\pbar u_\e)^m\wedge\omega^{n-m}$ converge to $(\theta+i\p\pbar P_\theta(h))^m\wedge\omega^{n-m}$.

By the uniform convergence of the $u_\e$ combined with the uniform lower bound for $i\p\pbar u_\e$, there exists a constant $C$ independent of $\e$ such that $u_\e, P_\theta(h)\in\PSH(X,C\omega)$. By \cite[Proposition 1.2]{KN15}, we have that
\[
(C\omega+i\p\pbar u_\e)^{i} \rightarrow (C\omega+i\p\pbar P_\theta(h))^{i},
\]
for each $1\leq i \leq n$. Writing then:
\[
\begin{split}
(\theta+i\p\pbar u_\e)^m\wedge\omega^{n-m} & = (\theta-C\omega+C\omega+i\p\pbar u_\e)^m\wedge\omega^{n-m} \\
& = \sum_{i=0}^{m}\left(
\begin{matrix}
m\\
i
\end{matrix}
\right)(C\omega+i\p\pbar u_\e)^{i}\wedge(\theta-C\omega)^{m-i}\wedge\omega^{n-m},
\end{split}
\]
we immediately see
\[
(\theta+i\p\pbar u_\e)^m\wedge\omega^{n-m} \rightarrow (\theta+i\p\pbar P_\theta(h))^m\wedge\omega^{n-m},
\]
as needed.

We now deal with the measure on the contact set. Since $P_\theta(h) - h \in C^1(X)$, we have that $\nabla(P_\theta(h) - h) = 0$ on the closed set $K$. By the first half of the proof, $P_\theta(h) \in C ^{1,1}(X)$, so we have that $\nabla_i (P_\theta(h) - h)$ is actually Lipschitz for any $i = 1,\ldots, 2n$ (working in a local coordinate chart, with the $i$ being real indices). It follows that:
\[
\nabla\nabla_i (P_\theta(h) - h) = 0
\]
almost everywhere on $\{\nabla_i(P_\theta(h) - h) = 0\} \supseteq K$ (see \cite[Theorem 3.2.6]{AmbTil}, for instance). Therefore, we have $\nabla^2 P_\theta(h) = \nabla^2 h$ a.e. on $K$, and so $\theta + i\p\pbar P_\theta(h) = \theta + i\p\pbar h$ on $K$, as the complex deriviatives are just complex linear combinations of the real ones. This establishes \eqref{env equ}.
\end{proof}


\end{document}